\definecolor{darkblue}{rgb}{0,0,0.6}
\newcommand{\ignore}[1]{}
\renewcommand{\epsilon}{\varepsilon}
\renewcommand{\phi}{\varphi}
\newcommand{\Z}{\mathbbm{Z}}
\newcommand{\Q}{\mathbbm{Q}}
\newcommand{\R}{\mathbbm{R}}
\newcommand{\RP}{\mathbb{RP}^2}
\newcommand{\CP}{\mathbb{CP}^2}
\newcommand{\bCP}{\overline{\mathbb{CP}}^2}
\newcommand{\bbN}{\mathbbm{N}}
\newcommand{\N}{\mathbbm{N}}
\newcommand{\La}{\Lambda}
\newcommand{\ra}{\longrightarrow}
\newcommand{\hra}{\hookrightarrow}
\newcommand{\toiso}{\xrightarrow{\cong}}
\newcommand{\raiso}{\xrightarrow{\,\cong\, }}
\newcommand{\wt}{\widetilde}
\newcommand{\ol}{\overline}
\newcommand{\Ext}{\mathrm{Ext}}
\newcommand{\sm}{\setminus}
\newcommand{\scong}{{\cong_s}}
\newcommand{\wh}{\widehat}
\newcommand{\ba}{\begin{array}}
	\newcommand{\ea}{\end{array}}
\DeclareMathOperator{\Aut}{Aut}
\DeclareMathOperator{\Hom}{Hom}
\DeclareMathOperator{\id}{Id}
\DeclareMathOperator{\im}{im}
\DeclareMathOperator{\coker}{coker}
\DeclareMathOperator{\Id}{Id}
\DeclareMathOperator{\cl}{cl}
\DeclareMathOperator{\pr}{pr}
\DeclareMathOperator{\rank}{rank}
\DeclareMathOperator{\Diff}{Diff}
\DeclareMathOperator{\sAut}{sAut}
\DeclareMathOperator{\hCh}{hCh}
\DeclareMathOperator{\sCh}{sCh}
\DeclareMathOperator{\hCW}{hCW}
\DeclareMathOperator{\ks}{ks}
\newcommand{\bsm}{\left(\begin{smallmatrix}}
\newcommand{\esm}{\end{smallmatrix}\right)}
\numberwithin{equation}{section}
\newtheorem{thm}[equation]{Theorem}
\newtheorem{theorem}[equation]{Theorem}
\newtheorem{prop}[equation]{Proposition}
\newtheorem{cor}[equation]{Corollary}
\newtheorem{lemma}[equation]{Lemma}
\newtheorem{proposition}[equation]{Proposition}
\newtheorem{question}[equation]{Question}
\newtheorem{theoremalph}{Theorem}
\theoremstyle{definition}
\newtheorem{example}[equation]{Example}
\newtheorem{definition}[equation]{Definition}
\newtheorem{rem}[equation]{Remark}
\newtheorem{remark}[equation]{Remark}
\newtheorem*{claim}{Claim}
\crefname{lemma}{Lemma}{Lemmas}
\crefname{section}{Section}{Sections}
\crefname{definition}{Definition}{Definitions}
\crefname{defi}{Definition}{Definitions}
\crefname{prop}{Proposition}{Propositions}
\crefname{theorem}{Theorem}{Theorems}
\crefname{thm}{Theorem}{Theorems}
\crefname{cor}{Corollary}{Corollaries}
\crefname{rem}{Remark}{Remarks}
\title[$\CP$-stable classification of $4$-manifolds]{Four-manifolds up to connected sum with complex projective planes}
\author{Daniel Kasprowski}
\address{Rheinische Friedrich-Wilhelms-Universit\"{a}t Bonn, Mathematisches Institut,\newline \indent Endenicher Allee 60, 53115 Bonn, Germany}
\email{kasprowski@uni-bonn.de}
\author{Mark Powell}
\address{Department of Mathematical Sciences, Durham University, Upper Mountjoy,\newline \indent Stockton Road, Durham, DH1 3LE}
\email{mark.a.powell@durham.ac.uk}
\author{Peter Teichner}
\address{Max-Planck-Institut f\"{u}r Mathematik, Vivatsgasse 7, 53111 Bonn, Germany}
\email{teichner@mpim-bonn.mpg.de}
\subjclass[2020]{57N13, 57N65}
\keywords{4-manifolds, $\CP$-stable classification, Postnikov 2-type, $k$-invariant}
\begin{document}

\begin{abstract}
Based on results of Kreck, we show that closed, connected $4$-manifolds up to connected sum with copies of the complex projective plane are classified in terms of the fundamental group, the orientation character and an extension class involving the second homotopy group.  For fundamental groups that are torsion free or have one end, we reduce this further to a classification in terms of the homotopy 2-type.
\end{abstract}

\maketitle
\section{Introduction}

We give an explicit, algebraic classification of closed, connected 4-manifolds up to connected sum with copies of the complex projective plane $\CP$.

After the great success of Thurston's geometrisation of 3-manifolds, the classification of closed 4-manifolds remains one of the most exciting open problems in topology. The exactness of the surgery sequence and the $s$-cobordism theorem are known for topological $4$-manifolds with good fundamental groups, a class of groups that includes all solvable groups~\cite{FT, KQ}. However, a homeomorphism classification is only known for closed $4$-manifolds with trivial~\cite{Freedman82}, cyclic~\cite{Freedman-Quinn, surgeryandduality, hambleton-kreck-93-III} or Baumslag-Solitar~\cite{HKT}  fundamental group.

For smooth 4-manifolds, gauge theory provides obstructions even to once hoped-for foundational results like simply connected surgery and $h$-cobordism, which hold in all higher dimensions. There is no proposed diffeomorphism classification in sight, indeed understanding homotopy 4-spheres is beyond us at present.
Most of the invariants derived from gauge theory depend on an orientation and do not change under connected sum with $\bCP$, but the differences dissolve under connected sum with $\CP$.  This suggests considering $4$-manifolds up to \emph{$\CP$-stable diffeomorphism}.

In Section~\ref{sec:definitions} we spell out all the details, and discuss some history, but first we would like to state our main result.
The \emph{$2$-type} of a connected manifold $M$ consists of its $1$-type $(\pi_1(M) ,w_1(M))$ together with the second homotopy group $\pi_2(M)$ and the $k$-invariant $k(M) \in H^3(\pi_1(M) ;\pi_2(M))$ that classifies the second Postnikov stage of $M$ via a fibration $K(\pi_2(M),2) \to P_2(M) \to K(\pi_1(M),1)$.

\begin{theoremalph}\label{thm:k}
Two closed, connected, smooth $4$-manifolds with $1$-type $(\pi,w)$ are $\CP$-stably diffeomorphic if and only if their $2$-types $(\pi,w,\pi_2,k)$ are stably isomorphic, provided $\pi$ is
\begin{enumerate}[(i)]
	\item\label{thm:k:item1} torsion-free; or
	\item \label{thm:k:item2} infinite with one end; or
	\item \label{thm:k:item3} finite with $H_4(\pi;\Z^w)$ annihilated by $4$ or $6$.
\end{enumerate}
\end{theoremalph}

Definition~\ref{def:k-stable} discusses the notion of a stable isomorphism of $2$-types.
Condition \eqref{thm:k:item3} is satisfied for all $(\pi,w)$ with $\pi$ finite and $w \colon \pi\to\{\pm 1\}$ nontrivial on the centre of $\pi$, as explained in Remark~\ref{rem:centre}. A curious consequence of Theorem~\ref{thm:k} is worth pointing out.

\begin{cor}
Let $M_1$ and $M_2$ be two closed, connected, smooth $4$-manifolds with fundamental group $\pi$ as in Theorem~\ref{thm:k}. If $M_1$ and $M_2$ have stably isomorphic 2-types, then the equivariant intersection forms on $\pi_2(M_1)$ and $\pi_2(M_2)$ become isometric after stabilisation by standard forms $(\pm 1)$.
\end{cor}

In the simply connected case, this follows from the classification of odd indefinite forms by their rank and signature, since for two given simply connected 4-manifolds, the rank and signatures of their intersection forms can be made to coincide by $\CP$-stabilisation. For general fundamental groups, the underlying module does not algebraically determine the intersection form up to stabilisation, but \cref{thm:k} says that equivariant intersection forms of $4$-manifolds with the appropriate fundamental group are controlled in this way.

\subsection{Definitions of various notions of stability}\label{sec:definitions}

The connected sum of a smooth 4-manifold $M$ with $\CP$ depends on a choice of embedding $D^4 \hra M$, where isotopic embeddings yield diffeomorphic connected sums. A complex chart gives a preferred choice of isotopy class of embeddings $D^4 \subset \mathbb{C}^2 \hra \CP$. We will assume that the manifolds we consider are connected unless stated otherwise.  If $M$ admits an orientation $\mathfrak{o}$, there are two distinct isotopy classes of embeddings $D^4 \hra M$, exactly one of which is orientation preserving. So we obtain two manifolds $(M,\mathfrak{o})\#\CP$ and $(M,-\mathfrak{o})\#\CP \cong (M,\mathfrak{o})\#\bCP$  that in general are not diffeomorphic.  On the other hand if $M$ is not orientable, we can isotope an embedding $D^4 \hra M$ around an orientation reversing loop to see that there is an essentially unique connected sum~$M\#\CP$.

While our theorems are stated for unoriented 4-manifolds, we will often make some choice of a (twisted) fundamental class of~$M$. But for our results the specific choice will not matter since we will usually factor out by the effect of this choice.

A \emph{$1$-type} $(\pi,w)$ consists of a group $\pi$ and a homomorphism $w \colon \pi  \to \{\pm 1\}$.  A connected manifold $M$ has 1-type $(\pi,w)$ if there is a map $c \colon M \to B\pi$ such that  $c_* \colon \pi_1(M)  \to \pi_1(B\pi) = \pi$ is an isomorphism and $w \circ c_* \colon \pi_1(M)  \to\{\pm 1\}$ determines the first Stiefel-Whitney class $w_1(M)  \in H^1(M;\Z/2)\cong \Hom(\pi_1(M) ,\Z/2)$.
All our results are based on the following theorem.

\begin{theorem}[Kreck {\cite{surgeryandduality}}]\label{theorem:Kreck}
Two closed, connected, smooth $4$-manifolds $M_1$ and $M_2$ are $\CP$-stably diffeomorphic if and only if they have the same $1$-type $(\pi,w)$ and the images of some choices of $($twisted$)$ fundamental classes $(c_1)_*[M_1]$ and $(c_2)_*[M_2]$ coincide in $H_4(\pi;\Z^w)/\pm\Aut(\pi)$.
\end{theorem}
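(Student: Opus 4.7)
The plan is to prove necessity by direct computation and sufficiency by applying Kreck's modified surgery to a twisted bordism between stabilizations of $M_1$ and $M_2$.

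\textbf{Necessity.} Since $\CP$ is simply connected, connected sum with a $\CP$ summand preserves the $1$-type: a classifying map $c\colon M\to B\pi$ extends across the $\CP$ summand by the constant map. As the extension is null-homotopic on $\CP$, we have $c_*[M\#\CP] = c_*[M]$ in $H_4(\pi;\Z^w)$. The $\pm\Aut(\pi)$ ambiguity reflects the fact that the isomorphism $\pi_1(M)\cong\pi$ is only canonical up to $\Aut(\pi)$, while the sign accounts for the order-$2$ automorphism of the coefficient module $\Z^w$ and, when $M$ is orientable, the choice of orientation.

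\textbf{Sufficiency.} Assume $(c_1)_*[M_1] = \pm\phi_*(c_2)_*[M_2]$ in $H_4(\pi;\Z^w)$ for some $\phi\in\Aut(\pi)$. Replacing $c_2$ by $B\phi\circ c_2$ (and reorienting if needed) we may assume equality on the nose. I would first lift this to an equality in the bordism group $\Omega_4(B\pi,w)$ of closed $4$-manifolds equipped with a map to $B\pi$ pulling $w$ back to $w_1$. The Atiyah--Hirzebruch spectral sequence for this group has nontrivial contributions in total degree $4$ only from $H_4(\pi;\Z^w)$ and from $\Omega_4\cong\Z$, the latter generated by $[\CP]$ and $[\bCP]$. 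Consequently, after connect-summing sufficiently many copies of $\CP$ and $\bCP$ onto $M_1$ and $M_2$, the resulting manifolds cobound a compact $5$-manifold $W$ together with a map $C\colon W\to B\pi$ extending $c_1\sqcup c_2$ and realizing the $1$-type on $W$.

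Next I would apply Kreck's modified surgery to $W$ relative to $\partial W$. Standard $1$- and $2$-surgeries in the interior turn $C$ into a $2$-equivalence; these surgeries are unobstructed because the normal $1$-type records only $\pi_1$ and $w_1$ and not $w_2$. Once this is achieved, $W$ has a handle decomposition rel boundary with only $2$- and $3$-handles, and the residual surgery obstruction is encoded by the $\Z[\pi]$-valued equivariant intersection form on the relative middle-dimensional homology. Further connected sums on the boundary with $\CP$ and $\bCP$ introduce $\langle\pm 1\rangle$-summands in this form, stably absorbing the obstruction. Realizing the resulting algebraic cancellation by disjointly embedded $2$-spheres in $W$ produces canceling $2$-/$3$-handle pairs, and after enough stabilization $W$ becomes a trivial product cobordism, yielding the desired $\CP$-stable diffeomorphism.

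\textbf{Main obstacle.} The critical step is the last one: realizing the algebraic diagonalization of the middle-dimensional intersection form by actual disjoint embedded $2$-spheres in the interior of $W$. This is the familiar difficulty of performing surgery in dimension $4$, and it is precisely here that stabilization by $\CP$ and $\bCP$ is essential: each summand supplies additional $2$-spheres with arbitrary $w_2$-behaviour, furnishing the geometric duals needed to execute the Whitney moves that remove intersection and self-intersection points, which is exactly what fails in Kreck's ordinary stable classification without this freedom.
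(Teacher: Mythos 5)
Your necessity argument and your computation of the bordism group via the Atiyah--Hirzebruch spectral sequence match the paper (its Lemma~2.1), and up to the point of producing a $5$-dimensional bordism $W$ with $C\colon W\to B\pi$ a $2$-equivalence your sufficiency argument also agrees with the paper's. The two proofs diverge at the final step, and it is there that your proposal has a genuine gap. You propose to make $W$ itself into a product cobordism after boundary stabilization by ``realizing the algebraic cancellation by disjointly embedded $2$-spheres in $W$''. Two problems. First, before any cancellation of $2$- against $3$-handles can be contemplated, $W$ must be made into an $h$-cobordism: the $\La$-module chain complex of $(W,M_1)$ is not acyclic in general (even the Euler characteristics of $M_1$ and $M_2$ need not agree), and killing $H_2(W,M_1;\La)$ requires the ``subtraction of copies of $S^2\times D^3$ tubed to the boundary'' step, which changes $M_1$ by $S^2\times S^2$-summands; your sketch omits this, as well as the Whitehead torsion issue. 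Second, and more seriously, the difficulty is not embedding $2$-spheres disjointly in the interior of the $5$-manifold $W$ --- that is generic in dimension $5$; it is the geometric cancellation of belt spheres of $2$-handles against attaching spheres of $3$-handles inside the $4$-dimensional \emph{middle level}, i.e.\ the Whitney trick in dimension $4$. After $S^2\times S^2$-stabilization of the middle level this can be done, but that statement \emph{is} Quinn's stable $s$-cobordism theorem; as written, your last step is an appeal to that theorem rather than a proof. (This route --- Kreck's original one --- is sketched at the end of the paper's Section~2, with these issues attended to.)

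The paper's actual proof is more elementary and sidesteps all of this, precisely because the statement allows stabilizing \emph{both} $M_1$ and $M_2$: one never needs $W$ to become a product. After cancelling the $0/5$- and $1/4$-handles, injectivity of $\pi_1(M_1)\to\pi_1(W)$ forces the attaching circles of the remaining $2$-handles to be null-homotopic, hence (homotopy implies isotopy for circles in a $4$-manifold) trivially embedded in $M_1$; dually for the $3$-handles and $M_2$. So the middle level of $W$ is simultaneously $M_1$ and $M_2$ connected-summed with $S^2$-bundles over $S^2$, and one extra $\CP$ converts each such bundle summand into $\CP\#\bCP\#\CP$. No Whitney moves, no $s$-cobordism theorem, and no matching of $2$- and $3$-handles is needed. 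To repair your route you must either first make $W$ an $s$-cobordism and then invoke Quinn's theorem explicitly, or switch to the handle-triviality observation above.
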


The map induced on homology by the classifying map $c \colon M \to B\pi$ sends the (twisted) fundamental class $[M]\in H_4(M;\Z^{w_1(M)})$ to $H_4(\pi;\Z^w)$.  Here the coefficients are twisted using the orientation characters $w_1(M)$ and $w$ to give $\Z^{w_1(M)}$ and $\Z^w$ respectively.   The quotient by $\Aut(\pi)$ takes care of the different choices of identifications $c_*$ of the fundamental groups with $\pi$, and the sign $\pm$ removes dependency on the choice of fundamental class.
In particular, within a 1-type $(\pi,w)$ such that $H_4(\pi;\Z^w)=0$, there is a single $\CP$-stable diffeomorphism class.
Kreck's result also implies that if $M_1$ and $M_2$ are homotopy equivalent then they are $\CP$-stable diffeomorphic.
We will give a self-contained proof of \cref{theorem:Kreck} in \cref{sec:stable classification}, that proceeds by simplifying a handle decomposition.

\begin{rem}\label{rem:topological}
Kreck's theorem, as well as our results, also hold for topological $4$-manifolds, with the additional condition that the Kirby-Siebenmann invariants $\ks(M_i) \in\Z/2$ coincide for $i=1,2$ cf.~\cite[Chapter~5]{teichnerthesis}.  A topological $4$-manifold $M$ has a smooth structure $\CP$-stably if and only if $\ks(M)=0$.  Using the existence of simply-connected closed 4-manifolds with non-trivial Kirby-Siebenmann invariant, one can quickly deduce the topological result from the smooth one.  See \cref{remark:topological-case} for more details.   We therefore focus on the smooth category from now on.
\end{rem}

A connected sum with $\CP$ changes the second homotopy group by adding a free summand $\pi_2(M \# \CP) \cong \pi_2(M)  \oplus \La$, where here and throughout the paper we write $[\La := \Z[\pi_1(M) ] \cong \Z[\pi]$ for the group ring.   The $k$-invariant $k(M) \in H^3(\pi;\pi_2(M))$ maps via $(k(M),0)$ to $k(M\# \CP)$ under the composition
\[
H^3(\pi;\pi_2(M) ) \ra H^3(\pi;\pi_2(M) ) \oplus H^3(\pi;\La) 
\raiso H^3(\pi;\pi_2(M \# \CP)).
\]

\begin{definition}\label{def:k-stable}
A \emph{stable isomorphism} of $2$-types is a pair $(\varphi_1, \varphi_2)$ consisting of an isomorphism $\varphi_1 \colon \pi_1(M_1) \to \pi_1(M_2)$ with $\varphi_1^*(w_1(M_2)) = w_1(M_1)$, together with an isomorphism, for some $r,s\in\N_0$,
\[\varphi_2 \colon  \pi_2(M_1) \oplus \La^r \raiso \pi_2(M_2) \oplus \La^s \text{ satisfying } \varphi_2(g\cdot x) = \varphi_1(g)\cdot \varphi_2(x) \]
for all $g\in\pi_1(M_1)$ and for all $x\in\pi_2(M_1) \oplus \La^r$. We also require that $(\varphi_1, \varphi_2)$ preserves $k$-invariants in the sense that
\[
\begin{array}{rcl} (\varphi_1^{-1}, \varphi_2) \colon  H^3(\pi_1(M_1);\pi_2(M_1) \oplus \La^r) &\ra & H^3(\pi_1(M_2);\pi_2(M_2) \oplus \La^s) \\
(k(M_1),0)  &\mapsto & (k(M_2),0). \end{array}
\]
\end{definition}

\begin{rem}\label{rem:centre}
Observe that, by design, a $\CP$-stable diffeomorphism induces a stable isomorphism of 2-types, so the `only if' direction of \cref{thm:k} holds for all groups.
In addition, for an arbitrary fundamental group $\pi$, the stable $\CP$-stable diffeomorphism class represented by those manifolds with $c_*[M]=0\in H_4(\pi;\Z^w)$ is always detected by the stable isomorphism class of their 2-type $(\pi,w,\pi_2,k)$, cf.\ \cref{cor:detect-trivial}.

Condition \eqref{thm:k:item3} in Theorem~\ref{thm:k} is satisfied for all $(\pi,w)$ with $\pi$ finite and $w$ nontrivial on the centre of $\pi$.  To see this, apply \cite[Proposition~III.8.1]{brown} for some element $g$ in the centre of $\pi$ with $w(g)$ nontrivial, to show that multiplication by $-1$ acts as the identity. Hence every nontrivial element in $H_4(\pi;\Z^w)$ has order two.
\end{rem}

\subsection{Necessity of assumptions}\label{sec:necessity-intro}

Next we present examples of groups demonstrating that hypotheses of \cref{thm:k} are necessary. The details are given in Section~\ref{section:post-2-type-does-not-suffice-lens-spaces}.  We consider a class of infinite groups with two ends, namely $\pi = \Z \times \Z/p$, and closed, orientable $4$-manifolds, so $w=0$.  In this case the 2-type does not determine the $\CP$-stable diffeomorphism classification, as the following example shows.

\begin{example}\label{ex:lens}
Let $L_{p_1,q_1}$ and $L_{p_2,q_2}$ be two $3$-dimensional lens spaces, which are closed, oriented $3$-manifolds with cyclic fundamental group $\Z/p_i$ and universal covering $S^3$.  Assume that $p_i \geq 2$ and $1 \leq q_i < p_i$. The $4$-manifolds $M_i:=S^1 \times  L_{p_i,q_i}$, $i=1,2$, have $\pi_2(M_i) = \{0\}$. Whence their $2$-types are stably isomorphic if and only if $\pi_1(L_{p_1,q_1}) \cong \pi_1(L_{p_2,q_2})$, that is if and only if $p_1=p_2$.
However, we will show that the $4$-manifolds $M_1$ and $M_2$ are $\CP$-stably diffeomorphic if and only if $L_{p_1,q_1}$ and~$L_{p_2,q_2}$ are homotopy equivalent.

It is a classical result that there are homotopically inequivalent lens spaces with the same fundamental group. In fact it was shown by J.H.C. Whitehead that $L_{p,q_1}$ and $L_{p,q_2}$ are homotopy equivalent if and only if their $\Q/\Z$-valued linking forms are isometric.  See ~\cref{section:post-2-type-does-not-suffice-lens-spaces} for more details and precise references.
\end{example}

We do not know an example of a finite group $\pi$ for which the conclusion of \cref{thm:k} does not hold. Thus the following question remains open.

\begin{question}
Does the conclusion of \cref{thm:k} hold for all finite groups?
\end{question}

In \cite{KT}, the first and third authors showed that for $4$-manifolds with finite fundamental group, if one adds the data of the stable class of the equivariant intersection form to the $2$-type, then the $\CP$-stable class is determined.

\subsection{Is the \texorpdfstring{$k$}{k}-invariant required?}\label{sec:k-required-intro}

In \cref{sec:aspherical}, we will show that while \cref{thm:k} applies, the $k$-invariant is not needed for the $\CP$-stable classification of 4-manifolds $M$ with  fundamental group $\pi$, where $\pi$ is also the fundamental group of some closed aspherical $4$-manifold.  A good example of such a $4$-manifold is the 4-torus, with fundamental group $\pi=\Z^4$. More generally a surface bundle over a surface, with neither surface equal to $S^2$ nor $\RP$, is an aspherical 4-manifold.  We know from  \cref{theorem:Kreck} that the $\CP$-stable equivalence classes are in bijection with~$\N_0$ because $H_4(\pi;\Z^w)\cong \Z$, but it is not obvious how to compute this invariant from a given $4$-manifold~$M$.

We will show in \cref{thm:aspherical} that in this case the stable isomorphism type of the $\La$-module $\pi_2(M) $ determines the $\CP$-stable diffeomorphism class of~$M$. Assuming moreover that $H^1(\pi;\Z)\neq 0$, we will show in Theorem~\ref{thm:priexample} that the highest torsion in the abelian group of twisted co-invariants $\Z^w \otimes_{\La} \pi_2(M) $ detects this stable isomorphism class in almost all cases.

On the other hand, there are many cases where \cref{thm:k} applies and the $k$-invariant is indeed required, as the next example shows. This example also leverages the homotopy classification of lens spaces.

\begin{example}\label{ex:k}
For the following class of 4-manifolds, the $k$-invariant is required in the $\CP$-stable classification. Let $\Sigma$ be an aspherical 3-manifold.  Consider a lens space $L_{p,q}$ with fundamental group $\Z/p$ and form the 4-manifold $M(L_{p,q},\Sigma) := S^1\times (L_{p,q}\# \Sigma)$ with fundamental group $\pi=\Z\times (\Z/p * \pi_1(\Sigma))$.

We will show in \cref{sec:k-required} that these groups have one end and hence our Theorem~\ref{thm:k} applies, so the 2-type determines the $\CP$-stable classification.  Similarly to Example~\ref{ex:lens}, we will show that two $4$-manifolds of the form $M(L_{p,q},\Sigma)$ are $\CP$-stably diffeomorphic if and only if the involved lens spaces are homotopy equivalent. However, the $\La$-modules $\pi_2(M(L_{p,q},\Sigma))$ will be shown to depend only on $p$. Since there are homotopically inequivalent lens spaces with the same fundamental group, we deduce that the stable isomorphism class of $\pi_2(M(L_{p,q},\Sigma))$ is a weaker invariant than the full 2-type $(\pi, 0,\pi_2, k)$.
\end{example}

\subsection{Extension classes and the proof of Theorem~\ref{thm:k}}

In order to prove \cref{thm:k}, we will translate the image of the fundamental class $c_*[M] \in H_4(\pi;\Z^w)$ completely into algebra.

Let $(C_*,d_*)$ and $(C'_*,d'_*)$ be free resolutions of $\Z$ as a $\La$-module with $C_i$ and $C'_i$ finitely generated for $i=0,1,2$. See \cref{lem:naturality-ext} for an explanation of the naturality in the following proposition. Note that $d^{',2}_w$ is the dual of $d'_2$ twisted by the orientation character $w$, see \cref{sec:conventions} for our conventions.

\begin{proposition} \label{prop:mainpri1}
	There is a natural isomorphism
	\[\psi(C_*,C'_*) \colon H_4(\pi;\Z^w)\raiso \Ext_{\La}^1\left(\coker d_w^{\prime,2},\ker d_2\right). \]
\end{proposition}

As always, $M$ is a closed, connected, smooth $4$-manifold together with a $2$-equivalence $c \colon M\to B\pi$ such that $c^*(w) = w_1(M)$.  We choose a (twisted) fundamental class and a handle decomposition of $M$ and consider the chain complex $(C_*^M,d^M_*)$ of left $\La$-modules, freely generated by the handles in the universal cover~$\widetilde{M}$. Note that by the Hurewicz theorem, $\pi_2(M)  \cong H_2(\widetilde M) \cong \ker d_2/\im d_3$. Let $M^\natural$ be $M$ with the dual handle decomposition and denote its $\La$-chain complex by $(C_*^\natural,d_*^\natural)$. Then by Poincar\'e duality $C^*_{\natural,w}\cong C_*^M$. In particular, $\coker d^2_{\natural,w}\cong \coker d_3$ and we will use this isomorphism implicitly in \cref{prop:mainpri2} below.  Note that picking the other orientation of $M$ changes this isomorphism by a sign. Thus in the next proposition the image of $c_*[M]$ in $\Ext_{\La}^1(\coker d_3, \ker d_2)$ is independent of the choice of $[M]$.

\begin{proposition}\label{prop:mainpri2}
	Let $D_*$ be a free resolution of $\Z$ starting with $D_i=C_i^M$ for $i=0,1,2$ and let $D'_*$ be a free resolution of $\Z$ starting with $D'_i=C_i^\natural$ for $i=0,1,2$.
	The isomorphism $\Psi(D_*,D_*')$ from \cref{prop:mainpri1} sends $c_*[M]$ to the extension
\[0\to \ker d_2 \xrightarrow{(i,-p)^T} C_2\oplus \ker d_2/\im d_3\xrightarrow{(p',i')} \coker d_3\to 0\]
where $i,i'$ are the inclusions and $p,p'$ are the projections.  In particular, $c_*[M]=0$ if and only if this extension is trivial, and hence $\pi_2(M) $ is stably isomorphic to the direct sum $\ker d_2\oplus \coker d_3$.
\end{proposition}

\cref{prop:mainpri2} is a generalisation of \cite[Proposition~2.4]{Ham-kreck-finite}, where the theorem was proven for oriented manifolds with finite fundamental groups.
The fact that~$\pi_2(M) $ fits into such an extension for general groups was shown by Hambleton in~\cite{hambleton-gokova}.
The above proposition implies the following, because $c_*[M] \in H_4(\pi;\Z^w)$ determines the $\CP$-stable diffeomorphism type by \cref{theorem:Kreck}.

\begin{thm}\label{thm:prim-obstruction}
Two closed, connected, smooth $4$-manifolds $M_1$ and $M_2$ with $1$-type $(\pi,w)$ are $\CP$-stably diffeomorphic if and only if the extension classes determining $\pi_2(M_1)$ and $\pi_2(M_2)$ coincide in
\[
\Ext^1_{\La}(\coker d^2_w,\ker d_2) \cong H_4(\pi;\Z^w)
\]
modulo the action of $\pm\Aut(\pi)$.
\end{thm}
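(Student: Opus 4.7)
The plan is to translate Kreck's criterion (Theorem \ref{theorem:Kreck}) into extension-theoretic language using Proposition \ref{prop:mainpri}, then verify that the resulting extension class is well-defined in the canonical target group stated in the theorem.

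First I apply Theorem \ref{theorem:Kreck}: since by hypothesis both $M_1$ and $M_2$ have $1$-type $(\pi,w)$, they are $\CP$-stably diffeomorphic if and only if $(c_1)_*[M_1]$ and $(c_2)_*[M_2]$ coincide in $H_4(\pi;\Z^w)/\pm\Aut(\pi)$. For each $M_i$, choose a handle decomposition with equivariant chain complex $(C^i_*, d^i_*)$. Proposition \ref{prop:mainpri} then yields a natural isomorphism $\psi_i \colon H_4(\pi;\Z^w) \raiso \Ext^1_\La(\coker d^i_3,\ker d^i_2)$ that sends $(c_i)_*[M_i]$ to the explicit extension class determining $\pi_2(M_i)$ as a $\La$-module. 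Thus the $\CP$-stable diffeomorphism question reduces to comparing these two extension classes, each living a priori in its own $\Ext$ group.

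Next I identify both source groups canonically with the fixed target $\Ext^1_\La(H^2(K;\La^w), H_2(K;\La))$. The $2$-skeleton $K_i$ of the chosen handle decomposition of $M_i$ is a finite $2$-complex with fundamental group $\pi$, and the isomorphism $\Ext^1_\La(\coker d^i_3, \ker d^i_2) \cong \Ext^1_\La(H^2(K_i;\La^w), H_2(K_i;\La))$ recorded immediately before the theorem statement relates the two presentations. Any two finite $2$-complexes with fundamental group $\pi$ become homotopy equivalent after wedging with sufficiently many copies of $S^2$; by \cref{lem:ext-stable-iso} such wedge sums leave the $\Ext$ group unchanged, and by \cref{lemma:homotopy-equiv-iso-ext} a homotopy equivalence inducing the identity on $\pi$ induces the identity on $\Ext$. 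Composing these canonical identifications gives a well-defined isomorphism between each $\Ext^1_\La(\coker d^i_3,\ker d^i_2)$ and the fixed group $\Ext^1_\La(H^2(K;\La^w), H_2(K;\La))$.

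The final step addresses the $\pm\Aut(\pi)$ ambiguity. Distinct identifications $c_i\colon \pi_1(M_i)\to\pi$ differ by an automorphism $\alpha\in\Aut(\pi)$; by functoriality of $H^*(K;\La^w)$ and $H_*(K;\La)$ in $\pi$, this automorphism acts on the target $\Ext$ group compatibly with its action on $H_4(\pi;\Z^w)$ under $\psi_i$. The sign corresponds to the choice of fundamental class $\pm[M_i]$: reversing the orientation negates $(c_i)_*[M_i]\in H_4(\pi;\Z^w)$, and by additivity of the connecting map of a short exact sequence, the associated extension class is negated as well. Assembling Theorem \ref{theorem:Kreck} with the canonical isomorphisms and this equivariance yields the stated biconditional.

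The main obstacle is ensuring that the identification of $\Ext^1_\La(\coker d^i_3,\ker d^i_2)$ with the canonical target is truly independent of the handle decomposition and of the identification of $\pi_1(M_i)$ with $\pi$; this is precisely the content of \cref{lem:ext-stable-iso} and \cref{lemma:homotopy-equiv-iso-ext}, combined with the naturality of $\psi_i$ in Proposition \ref{prop:mainpri} under stabilisation by a cancelling $2$-/$3$-handle pair (which corresponds to wedging the $2$-skeleton with an $S^2$). Once these compatibilities are in hand, the proof is an assembly of already-established pieces.
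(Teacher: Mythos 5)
Your proposal is correct and takes essentially the same route as the paper: the paper deduces \cref{thm:prim-obstruction} directly from \cref{theorem:Kreck} together with \cref{thm:implies-prim-theorems} (the precise form of \cref{prop:mainpri}), with the independence of choices supplied by \cref{lem:ext-stable-iso}, \cref{lemma:homotopy-equiv-iso-ext} and \cref{cor:choices}, exactly the ingredients you assemble. Your explicit discussion of the $\pm\Aut(\pi)$ equivariance is a detail the paper leaves implicit, but it is consistent with the naturality of $\psi$.
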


This result translates the $H_4(\pi;\Z^w)$ invariant into algebra for all groups.
There is a version of \cref{thm:prim-obstruction} where we do not need to divide out by $\Aut(\pi)$: there is a stable diffeomorphism \emph{over $\pi$} if the extension classes agree (up to sign) for a specific choice of identifications $\pi_1(M_i) \xrightarrow{\cong} \pi$.

In \cref{sec:k-ext1} and \cref{sec:k-ext}, we will derive \cref{thm:k} from \cref{thm:prim-obstruction}. Since two isomorphic extensions yield stably isomorphic second homotopy groups, we seek a kind of converse by adding the datum of the $k$-invariant.
In the purely algebraic \cref{sec:k-ext1}, given a 1-cocycle $f \colon C^1_w\to \ker d_2$ representing an extension class in $\Ext^1_{\La}(\coker d^2_w,\ker d_2)$, we construct a $\La$-module chain complex $C_f$ by attaching (trivial) 2-chain and (non-trivial) 3-chains to~$C_{2} \to C_1 \to C_0$. We show that $\Theta \colon f\mapsto C_f$ gives a well-defined map from our extension group $\Ext^1_{\La}(\coker d^2_w,\ker d_2)$ to stable isomorphism classes in a category $\sCh_2(\pi)$ of chain complexes over $\La=\Z\pi$ that are the algebraic analogue of the 2-type. Moreover, if $f$ is the extension class coming from a 4-manifold~$M$, then $\Theta(f) = C_f$ agrees in $\sCh_2(\pi)$ with the stable class determined by the chain complex of $P_2(M)$ of~$M$. Recall that $P_2(M)$ is determined by 2-type of $M$, which includes the $k$-invariant.
In Theorem~\ref{thm:injective} we then show that the stable class of $C_f$ detects our extension class modulo the action of the automorphisms of $\coker d^2_w$.  That is, $\Theta$ induces an injective map on the quotient of the extensions by the automorphisms of $\coker d^2_w$.  Finally, in \cref{lem:autcoker} we analyse the action of such automorphisms on the extension group $\Ext^1_{\La}(\coker d^2_w,\ker d_2)$ and show that under the assumptions of Theorem~\ref{thm:k}, the automorphisms can change the extension class at most by a sign.  Since our manifolds are unoriented, the sign ambiguity is already present, so \cref{thm:k} follows.

\subsection{Additional remarks}

\subsubsection*{Topological manifolds}\label{remark:topological-case}
As mentioned in \cref{rem:topological}, \cref{thm:k} and \cref{thm:prim-obstruction} hold for closed, connected topological 4-manifolds $M_i$, with the additional assumption that the Kirby-Siebenmann invariants $\ks(M_i) \in \Z/2$ are equal. Here is the proof, which is well-known to the experts.

Let $M_1$ and $M_2$ be two closed, connected, topological 4-manifolds, with $\ks(M_1)=\ks(M_2)$, and suppose that the conditions of one of the two theorems mentioned above are satisfied. Note that all the conditions involve algebraic topological invariants, so are category independent.  As explained in \cite[Section~8]{FriedlNagelOrsonPowell}, it follows from Kirby-Siebenmann~\cite[pp.~321--2]{KirbySiebenmann} and Freedman-Quinn~\cite[Sections~8.6]{Freedman-Quinn} that a $4$-manifold $M$ admits a smooth structure $S^2 \times S^2$-stably if and only if $\ks(M)=0$. Since $S^2 \times S^2$ and $S^4$ are $\CP$-stably diffeomorphic, we deduce that there exists a smooth structure on $M$ $\CP$-stably if and only if $\ks(M)=0$.
To apply this, if necessary connect sum both $M_1$ and $M_2$ with the Chern manifold $*\CP$ with nontrivial Kirby-Siebenmann invariant~\cite[Section~10.4]{Freedman-Quinn}, noting that $\ks$ is additive~\cite[Section~10.2B]{Freedman-Quinn},~\cite[Section~8]{FriedlNagelOrsonPowell}, to obtain a $\CP$-stably smoothable pair. Then the smooth result applies, so that the manifolds are $\CP$-stably diffeomorphic.  Now add another copy of $*\CP$ to both: by the classification of simply-connected 4-manifolds~\cite[Theorem~10.1]{Freedman-Quinn} we have
$*\CP \# *\CP \approx \CP \# \CP,$
so this returns us to the original $\CP$-stable homeomorphism classes, which we now know coincide.

\subsubsection*{Multiplicative Invariants}
Consider an invariant $I$ of closed, oriented $4$-manifolds valued in some commutative monoid, that is multiplicative under connected sum and invertible on $\CP$ and $\bCP$.   For example, the generalised dichromatic invariant of \cite{Barenz-Barrett-17}, $I(M) \in \mathbb{C}$, is such an invariant.

It follows from \cref{theorem:Kreck} that every such invariant is determined by the fundamental group $\pi_1(M) $, the image of the fundamental class in $H_4(\pi;\Z)$, its signature $\sigma(M)$ and its Euler characteristic $\chi(M)$.
More precisely, given a second manifold $N$ with the same fundamental group and $c_*[N]=c_*[M]\in H_4(\pi;\Z)$, one has
\[I(N)=I(M) \cdot I\big(\CP\big)^{\frac{\Delta\chi + \Delta\sigma}{2}} \cdot I\big(\bCP\big)^{\frac{\Delta\chi - \Delta\sigma}{2}},\]
where $\Delta\sigma:= \sigma(N) - \sigma(M)$ and $\Delta\chi := \chi(N) - \chi(M)$.
For example, for every manifold $N$ with fundamental group $\Z$ we have
\[I(N)=I(S^1\times S^3)\cdot I(\CP)^{\frac{\chi(N)+\sigma(N)}{2}}\cdot I(\bCP)^{\frac{\chi(N)-\sigma(N)}{2}}.\]
For the generalised dichromatic invariant, the values on $\CP$, $\bCP$ and $S^1 \times S^3$ are calculated in \cite[Sections~3.4~and~6.2.1]{Barenz-Barrett-17}.
Moreover, in the cases that \cref{thm:k} holds, any invariant as above is equivalently determined by the 2-type, the signature and the Euler characteristic. See~\cite{Reutter} for a comprehensive extension of this discussion.

\subsubsection*{Appearance of $\CP$-stable diffeomorphisms in the literature}
Manifolds up to $\CP$-stable diffeomorphism are also considered by Khan and Smith in \cite{khan-smith}. There the existence of incompressible embeddings of $3$-manifolds corresponding to amalgamated products of the fundamental group is studied. Note that Khan and Smith use the term bistable instead of $\CP$-stable.

\subsubsection*{Relation to the symmetric signature}
We also note that the proofs of \cref{sec:pri,sec:k-ext1} comprise homological algebra, combined with Poincar\'{e} duality to stably identify $\coker(d_3)$ with $\coker d^2_{\natural,w}$, during the passage from \cref{prop:mainpri1} to \cref{thm:prim-obstruction}.  The proofs could therefore be carried out if one only retained the symmetric signature in $L^4(\La,w)$ of the 4-manifold \cite{Ranicki-ATS-I, Ranicki-ATS-II}, that is the chain cobordism class of a $\La$-module handle chain complex of $M$ together with chain-level Poincar\'{e} duality structure.

\subsubsection*{Organisation of the paper}
\cref{sec:stable classification} gives a self-contained proof of Kreck's \cref{theorem:Kreck}.
After establishing conventions for homology and cohomology with twisted coefficients in \cref{sec:conventions}, we present an extended Hopf sequence in
\cref{section:Hopf-sequence}, and use this to give a short proof of \cref{thm:k} in a special case.

\cref{sec:pri} proves \cref{prop:mainpri1,prop:mainpri2}, which together express the fourth homology invariant of \cref{theorem:Kreck} in terms of an extension class involving $\pi_2(M) $.
\cref{sec:k-ext1} refines this in terms of the 2-type for certain groups. This is applied in the  proof of \cref{thm:k} in \cref{sec:k-ext}.

We then give a further refinement in \cref{sec:aspherical}: in the special case that $\pi$ is the fundamental group of some aspherical 4-manifold, the $\CP$-stable classification is determined by the stable isomorphism class of $\pi_2(M) $, and if in addition $H^1(\pi;\Z) \neq 0$ then the classification can essentially be read off from $\Z^w \otimes_{\La} \pi_2(M) $.

Finally, \cref{section:examples-necessity} discusses the examples mentioned in \cref{sec:necessity-intro,sec:k-required-intro}, showing first that the hypotheses of \cref{thm:k} are in general necessary and then showing that for many fundamental groups falling within the purview of \cref{thm:k}, knowledge of the stable isomorphism class of $\pi_2(M) $ does not suffice and the $k$-invariant is required.

\subsubsection*{Acknowledgements}

We are delighted to have the opportunity to thank Diarmuid Crowley, Jim Davis, Matthias Kreck, Markus Land, Wolfgang L\"{u}ck, Ian Hambleton, Henrik R\"uping, Mark Ullmann and Christoph Winges for many useful and interesting discussions on this work. We also thank the anonymous referee for several useful suggestions that helped improve the exposition.

The authors thank the Max Planck Institute for Mathematics and the Hausdorff Institute for Mathematics in Bonn for financial support and their excellent research environments.
The second author gratefully acknowledges an NSERC Discovery Grant, and was partially supported by EPSRC New Investigator grant EP/T028335/1 and EPSRC New Horizons grant EP/V04821X/1.

\section{\texorpdfstring{$\CP$}{CP2}-stable classification}\label{sec:stable classification}

Let $(\pi,w)$ be a 1-type, that is a finitely presented group $\pi$ together with a homomorphism $w\colon \pi \to \Z/2$. Let $\xi\colon BSO \times B\pi \to BO$ be a fibration that classifies the stable vector bundle obtained as the direct sum of the orientation double cover $BSO \to BO$ and the line bundle $L \colon B\pi \to BO(1)$ classified by~$w$. Let $\Omega_4(\pi,w) := \Omega_4(\xi)$ denote the bordism group of closed 4-manifolds $M$ equipped with a $\xi$-structure, namely a lift $\wt{\nu} \colon M \to BSO\times B\pi$ of the stable normal bundle $\nu \colon M\to BO$ along $\xi$, modulo cobordisms with the analogous structure extending the $\xi$-structure on the boundary.

\begin{lemma}\label{lem:bordism}
The map $\Omega_4(\pi,w) \to H_0(\pi;\Z^w) \times H_4(\pi;\Z^w)$ given by sending $[M,\wt \nu]$ to the pair
$\big(\sigma(M), (p_2\circ\wt\nu)_*([M])\big)$ is an isomorphism. Here if $M$ is orientable $(w=0)$,  $H_0(\pi;\Z^w) \cong \Z$ and $\sigma(M)\in\Z$ is the signature.  If $w$ is nontrivial, then $H_0(\pi;\Z^w) \cong \Z/2$ and $\sigma(M) \in \Z/2$ denotes the Euler characteristic mod~$2$.  We use $[M]\in H_4(M;\Z^w)\cong\Z$ for the fundamental class of $M$ determined by $\wt\nu \colon M \to BSO \times B\pi$.
\end{lemma}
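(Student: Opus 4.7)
\emph{Plan.} I would prove this via the Atiyah--Hirzebruch spectral sequence (AHSS) for the bordism theory $\Omega_*(\xi)$ applied to $B\pi$. A $\xi$-structure on $M$ is equivalent to a reference map $f := p_2\circ\wt\nu \colon M \to B\pi$ with $f^*w = w_1(M)$ together with an orientation of $\nu_M - f^*L$, so $\Omega_*(\xi)$ is $w$-twisted oriented bordism of $B\pi$. The AHSS then takes the form
\[E^2_{p,q} = H_p(\pi;\Omega^{SO}_q) \Longrightarrow \Omega_{p+q}(\xi),\]
with each coefficient $\Omega^{SO}_q$ given the $\Z[\pi]$-module structure coming from orientation reversal pulled back along $w$.

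Since $\Omega^{SO}_0\cong\Omega^{SO}_4\cong\Z$ with orientation reversal acting by $-1$ (the generator $\CP$ of $\Omega^{SO}_4$ is sent to $\bCP$, of signature $-1$) and $\Omega^{SO}_q=0$ for $q\in\{1,2,3\}$, only two entries on the diagonal $p+q=4$ survive: $E^2_{4,0}=H_4(\pi;\Z^w)$ and $E^2_{0,4}=H_0(\pi;\Z^w)$. Incoming differentials to $E^r_{4,0}$ vanish because their sources $E^r_{4+r,1-r}$ lie in negative $q$-coordinate for $r\geq 2$; outgoing differentials from $E^r_{0,4}$ land in negative columns; and incoming differentials $d^r\colon E^r_{r,5-r}\to E^r_{0,4}$ have zero source for $r\in\{2,3,4\}$ since $\Omega^{SO}_{5-r}=0$. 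This leaves $d^5\colon E^5_{5,0}\to E^5_{0,4}$ as the sole possibly non-trivial differential touching our diagonal, and the spectral sequence yields the extension
\[0\to F_0\to \Omega_4(\xi)\to H_4(\pi;\Z^w)\to 0,\]
in which $F_0=E^\infty_{0,4}$ is a quotient of $H_0(\pi;\Z^w)$ and the right-hand map is the Hurewicz edge homomorphism $[M,\wt\nu]\mapsto (p_2\circ\wt\nu)_*[M]$.

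To finish, I would use $\CP$ equipped with the constant map to $B\pi$ as a splitting device. It carries a canonical $\xi$-structure (its normal bundle is $SO$-oriented and $f^*L$ is trivial), factors through a point of $B\pi$, and hence lies in the subgroup $F_0\subset\Omega_4(\xi)$ obtained by inclusion of a $0$-cell. Its signature, respectively Euler characteristic modulo $2$, equals $1$, which generates $H_0(\pi;\Z^w)$ in both cases. As $F_0$ is a quotient of the cyclic group $H_0(\pi;\Z^w)$ and $\sigma$ surjects $F_0$ onto $H_0(\pi;\Z^w)$, the surjection is forced to be an isomorphism; this simultaneously kills $d^5$ and exhibits $\sigma$ as a retraction splitting the extension. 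The assembled isomorphism is then $[M,\wt\nu]\mapsto (\sigma(M),(p_2\circ\wt\nu)_*[M])$, as asserted.

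The main obstacle is this last step: verifying that $\sigma$ really is a well-defined bordism invariant on $\Omega_4(\xi)$ and that $\CP$ generates $F_0$. In the orientable case $\sigma$ is the classical signature, bordism-invariant by the Hirzebruch signature theorem; in the non-orientable case $\chi\bmod 2$ coincides with the top Stiefel--Whitney number $w_4[M]$ and is therefore invariant under arbitrary unoriented bordism, hence a fortiori under $\xi$-bordism. The identification of $F_0$ as the cyclic group generated by $[\CP]$ follows from the definition of the AHSS filtration via skeleta of $B\pi$.
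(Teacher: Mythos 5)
Your proposal is correct and follows essentially the same route as the paper: an Atiyah--Hirzebruch spectral sequence computation isolating $E^2_{0,4}\cong H_0(\pi;\Z^w)$ and $E^2_{4,0}\cong H_4(\pi;\Z^w)$, followed by splitting the resulting extension (and killing the one remaining differential) using that the signature, respectively $\chi \bmod 2$, is a bordism invariant taking the value $1$ on $\CP$. The only cosmetic differences are that the paper runs the ordinary AHSS for reduced oriented bordism of the Thom space $Th(L)$ and then applies the Thom isomorphism, where you use the twisted-coefficient AHSS over $B\pi$ directly, and that you justify bordism invariance of $\chi\bmod 2$ via the Stiefel--Whitney number $w_4[M]$ rather than the paper's doubling argument $0=\chi(W\cup_M W)=2\chi(W)-\chi(M)$.
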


\begin{proof}
By the Pontryagin-Thom construction, the bordism group $\Omega_4(\pi,w)$ is isomorphic to $\pi_5(MSO \wedge Th(L))$, where $Th(L)$ is the Thom space corresponding to the real line bundle $L$ and $MSO$ is the oriented Thom spectrum.  Shift perspective to think of $\pi_5(MSO \wedge Th(L))$ as the generalised reduced homology group $\pi_5(MSO \wedge -)$ of $Th(L)$, isomorphic to the group $\wt{\Omega}_5^{SO}(Th(L))$ of reduced $5$-dimensional oriented bordism over $Th(L)$. The Atiyah-Hirzebruch spectral sequence gives an exact sequence
\[
0 \ra H_1(Th(L);\Z) \ra \wt{\Omega}_5^{SO}(Th(L)) \ra H_5(Th(L);\Z) \ra 0
\]
because in the range $0 \leq i \leq 4$ we have that $\Omega_i^{SO} \cong \Z$ for $i=0,4$ and zero otherwise. Since $w_1(L)=w$, the Thom isomorphism theorem~\cite[Theorem~IV.5.7]{Rudyak}, \cite[Theorem~3.31]{Luck-basic-intro} shows that
\[H_1(Th(L);\Z) \cong H_0(\pi;\Z^w) \quad \text{ and } \quad H_5(Th(L);\Z)\cong H_4(\pi;\Z^w).\]
Using this the above short exact sequence translates to
\[
0 \ra H_0(\pi;\Z^w) \ra \Omega_4(\pi,w) \ra H_4(\pi;\Z^w) \ra 0.
\]
In the orientable case $w=0$, the group $H_0(\pi;\Z^w) \cong H_0(\pi;\Omega_4^{SO}) \cong \Omega^{SO}_4 \cong \Z$, while in the nonorientable case $w \neq 0$, we have $H_0(\pi;\Z^w) \cong H_0(\pi;(\Omega^{SO}_4)^w) \cong \Z \otimes_{\La} (\Omega^{SO}_4)^w \cong \Z/2$.  In both cases, the image of the inclusion $H_0(\pi;\Z^w) \to \Omega_4(\pi,w)$ is generated by $[\CP]$ with trivial map to $B\pi$.

To see that the short exact sequence splits, use that the signature is an additive invariant of oriented bordism, that the Euler characteristic mod 2 is an additive bordism invariant, and that both the signature and the Euler characteristic mod 2 of $\CP$ equal $1$.

That the Euler characteristic mod 2 is indeed a bordism invariant of closed $4$-manifolds can be seen as follows. It suffices to show that every $4$-manifold that bounds a $5$-manifold has even Euler characteristic. By Poincar\'e duality, a closed $5$-manifold has vanishing Euler characteristic. Now let $M$ be the boundary of a $5$-manifold $W$, and consider the Euler characteristic of the closed $5$-manifold $W \cup_M W$. We have $0=\chi(W\cup_M W)= 2\chi(W)-\chi(M)$, hence $\chi(M) =2\chi(W)$ is even.
\end{proof}

By surgery below the middle dimension, any bordism class can be represented by $(M,\wt\nu)$ where the second component of $\wt{\nu}$, namely $c:=p_2\circ\wt\nu$, is a $2$-equivalence, inducing an isomorphism $c_* \colon \pi_1(M) \toiso \pi$. Note also that the first component of $\wt\nu$ is an orientation of the bundle $\nu(M)\oplus c^*(L)$ over $M$, and hence all the circles required in our surgeries have trivial,  hence orientable, normal bundle.

\begin{thm}\label{thm:stable}
Let $(M_i, \wt\nu_i)$, for $i=1,2$,  be closed $4$-manifolds with the same $1$-type $(\pi,w)$ and assume that the resulting classifying maps $c_i \colon M_i\to B\pi$ are 2-equivalences for $i=1,2$. If $(M_1, \wt\nu_1)$ and $(M_2, \wt\nu_2)$ are $\xi$-cobordant then there is a $\CP$-stable diffeomorphism
\[
M_1\#^r \, \CP \#^{\bar{r}} \, \bCP \cong M_2\#^s \, \CP\#^{\bar{s}} \, \bCP, \quad \text{ for some } r,\bar{r},s, \bar{s} \in \N_0,
\]
 inducing the isomorphism $(c_2)_*^{-1}\circ (c_1)_*$ on fundamental groups and preserving the orientations on $\nu(M_i)\oplus (c_i)^*(L)$.
 \end{thm}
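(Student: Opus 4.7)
The plan is a direct application of Kreck's modified surgery machinery. Start from a $\xi$-cobordism $W^5$ between $(M_1,\wt\nu_1)$ and $(M_2,\wt\nu_2)$, perform surgeries in the interior of $W$ to achieve high connectivity, then analyse a handle decomposition of $W$ relative to $M_1\times I$ with only $2$- and $3$-handles to exhibit the stable diffeomorphism explicitly.

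The first step is to perform surgeries on embedded spheres of dimension at most $2$ in the interior of $W$, compatibly with the $\xi$-structure---the $BSO$-factor orients $\nu(W)\oplus c_W^*L$ and hence makes the normal bundles of the surgered spheres orientable, so the requisite framings exist---until $c_W\colon W\to B\pi$ becomes a $2$-equivalence and the pair $(W,M_i)$ is $2$-connected for $i=1,2$. A standard handle-trading argument, exploiting this $2$-connectivity, then reduces a handle decomposition of $W$ relative to $M_1\times I$ to one with only $2$- and $3$-handles.

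The attaching circles $\gamma_1,\dots,\gamma_k$ of the $2$-handles represent the kernel of $\pi_1(M_1)\to\pi_1(W)$, which is trivial; hence each $\gamma_i$ is null-homotopic in $M_1$. The main obstacle is that in the smooth category a null-homotopic circle need not be isotopic to a standard unknot in a $4$-ball: its nullhomotopy is generically an immersed disk with self-intersections, and removing these intersections requires Whitney moves whose Whitney disks may themselves intersect. After stabilising $M_1$ by sufficiently many copies of $S^2\times S^2$, the additional hyperbolic summands provide the room needed for the Whitney moves, and each $\gamma_i$ can then be isotoped into a disjoint $4$-ball. The corresponding $2$-handle attachment becomes a local connected sum with either $S^2\times S^2$ or $S^2\,\widetilde{\times}\, S^2\cong \CP\#\bCP$, according to the parity of its framing (which the $\xi$-structure constrains only up to orientability).

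Applying the same analysis dually to the $3$-handles (viewed as $2$-handles attached to $M_2$ from the other side of the middle level $N$) yields
\[
N \;\cong\; M_1 \# \#^{a}(S^2\times S^2) \# \#^{b}(\CP\#\bCP) \;\cong\; M_2 \# \#^{c}(S^2\times S^2) \# \#^{d}(\CP\#\bCP).
\]
The classical identity $(S^2\times S^2)\#\CP\cong \CP\#\CP\#\bCP$ then absorbs every $S^2\times S^2$ summand into $\CP\#\bCP$ at the cost of one extra $\CP$, producing the asserted $\CP$-stable diffeomorphism $M_1\#^r\CP\#^{\bar r}\bCP\cong M_2\#^s\CP\#^{\bar s}\bCP$. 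Naturality with respect to the maps to $BSO\times B\pi$ at every stage ensures that the resulting diffeomorphism induces $(c_2)_*^{-1}\circ(c_1)_*$ on $\pi_1$ and preserves the orientations of $\nu(M_i)\oplus c_i^*L$.
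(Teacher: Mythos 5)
Your overall architecture is the same as the paper's: surger the $\xi$-cobordism $W$ below the middle dimension so that $W\to B\pi$ is a $2$-equivalence, trade away the $0$-, $1$-, $4$- and $5$-handles, and then analyse the middle level between the remaining $2$- and $3$-handles as a connected sum of $M_1$ (resp.\ $M_2$) with $S^2$-bundles over $S^2$, finally converting $S^2\times S^2$ and $S^2\wt{\times}S^2$ summands into $\CP\#\bCP$ after one extra $\CP$-stabilisation. However, the paragraph you present as the ``main obstacle'' rests on a false premise. A null-homotopic embedded circle in a $4$-manifold \emph{is} isotopic to the standard unknot in a chart, and a collection of such circles can be simultaneously isotoped into disjoint $4$-balls: homotopy implies isotopy for $k$-spheres in $n$-manifolds whenever $n\geq 2k+2$, and $k=1$, $n=4$ satisfies this. (Concretely, a generic track $S^1\times I\to M^4\times I$ of the homotopy is embedded because $2\cdot 2<5$, and no Whitney disks ever enter the picture.) The genuine difficulty you are remembering---immersed disks/spheres whose double points must be removed by Whitney moves, with $S^2\times S^2$-stabilisation providing the algebraic cancellation needed to find the Whitney disks---concerns \emph{$2$-dimensional} objects in a $4$-manifold, i.e.\ the step from a bordism to an $s$-cobordism in Kreck's original argument. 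It is irrelevant to unknotting the $1$-dimensional attaching circles of the $5$-dimensional $2$-handles. Moreover, as written your fix does not establish what you need: Whitney moves would (at best) produce an embedded disk bounded by $\gamma_i$, and you would still owe an argument that this makes $\gamma_i$ isotopic to an unknot in a ball disjoint from the other data. Since the stabilisations you introduce are ultimately absorbed into the $\CP$-count, your final conclusion is not wrong, but the justification of the key step should simply be the general-position fact above.

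Two smaller points. First, interior surgeries on $W$ cannot in general make the pairs $(W,M_i)$ $2$-connected (killing $\pi_2(W,M_i)$ requires handle subtraction along the boundary); fortunately you only ever use that $\pi_1(M_i)\to\pi_1(W)$ is an isomorphism, which does follow from $c_W$ and $c_i$ being $2$-equivalences, so you should weaken that claim accordingly. Second, when you say the framing is ``constrained only up to orientability'' by the $\xi$-structure, be explicit that both framings genuinely occur and both resulting bundles, $S^2\times S^2$ and $S^2\wt{\times}S^2\cong\CP\#\bCP$, are handled by the identity $(S^2\times S^2)\#\CP\cong(S^2\wt{\times}S^2)\#\CP\cong\CP\#\CP\#\bCP$; this is exactly how the paper closes the argument.
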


\begin{remark}
 In the orientable case $(w=0)$ we have $r-\bar r=s-\bar s$, since the signatures of $M_1$ and $M_2$ coincide.
 For non-orientable $M_i$ $(w\neq 0)$, as discussed in the introduction the connected sum operation is well-defined without choosing a local orientation, and there is no difference between connected sum with $\CP$ and with $\bCP$.
 As a consequence, when $w \neq 0$ we can write the conclusion with $\bar r=0=\bar s$. We then must have $r \equiv s \mod 2$, since the mod 2 Euler characteristics of $M_1$ and $M_2$ coincide.
\end{remark}

Before proving \cref{thm:stable}, we explain how \cref{theorem:Kreck} from the introduction follows from \cref{thm:stable}.

\begin{proof}[Proof of \cref{theorem:Kreck}]
Suppose that we are given two closed, connected $4$-manifolds $M_1, M_2$ and $2$-equivalences $c_i \colon M_i\to B\pi$ with $(c_i)^*(w)=w_1(M_i)$, for $i=1,2$, such that for some choices of fundamental classes, $(c_1)_*[M_1] = (c_2)_*[M_2]$ up to $\pm\Aut(\pi)$.  Change $c_1$ and the sign of $[M_1]$, or equivalently, the orientation of $\nu(M_1)\oplus (c_1)^*(L)$, so that the images of the fundamental classes coincide. If necessary, add copies of $\CP$ or $\bCP$ to $M_1$ until the signatures (or the Euler characteristics mod $2$, as appropriate) agree. By Lemma~\ref{lem:bordism}, the resulting $\xi$-manifolds $(M_1,\wt\nu_1)$ and $(M_2,\wt\nu_2)$, are $\xi$-bordant, and hence by \cref{thm:stable} they are $\CP$-stably diffeomorphic.

For the converse, for a 4-manifold $M$ define a map $f\colon M \# \CP \to M$ that crushes $\CP \sm \mathring{D}^4$ to a point.  The map $f$ is degree one.  The classifying map $c \colon M \# \CP \to B\pi$ factors through $f$ since any map $\CP \to B\pi$ is null homotopic.  Therefore $c_*[M\# \CP] = c_*[M] \in H_4(\pi;\Z^w)$.
\end{proof}

\begin{proof}[Proof of Theorem~\ref{thm:stable}]
Let $(W,\wt\nu)$ be a compact 5-dimensional $\xi$-bordism between the two $\xi$-manifolds $(M_1, \wt\nu_1)$ and $(M_2, \wt\nu_2)$.  By surgery below the middle dimension on~$W$, we can arrange that $p_2\circ\wt\nu \colon W \to B\pi$ is a $2$-equivalence and hence that both inclusions $M_i \hookrightarrow W$, $i=1,2$, are isomorphisms on fundamental groups. To make sure that the normal bundles to the circles we surger are trivial (so orientable), we use that $\wt\nu$ pulls back $w$ to the first Stiefel-Whitney class of $W$, and that we perform surgery on circles representing  elements of $\pi_1(W)$ that become null homotopic in $B\pi$.

Pick an ordered Morse function on $W$, together with a gradient-like vector field, and consider the resulting handle decomposition: $W$ is built from $M_1 \times [0,1]$ by attaching $k$-handles for $k=0,1,2,3,4,5$, in that order. The resulting upper boundary is $M_2$. Since $M_i$ and $W$ are connected, we can cancel the 0- and 5-handles.
Since the inclusions $M_i\hookrightarrow W$ induce epimorphisms on fundamental groups, we can also cancel the 1- and 4-handles. Both these handle cancelling manoeuvres are well-known and used in the first steps of the proof of the $s$-cobordism theorem e.g.~ \cite[Proposition~5.5.1]{Wall-differential-topology}.  No Whitney moves are required, so this handle cancelling also works in the 5-dimensional cobordism setting.
We are left with 2- and 3-handles only.

Next, injectivity of $\pi_1(M_1) \to \pi_1(W)$ shows that the $2$-handles are attached trivially to $M_1$, noting that homotopy implies isotopy for circles in a 4-manifold~\cite{Hudson-72}.   Similarly, the $3$-handles are attached trivially to $M_2$.
As a consequence, the middle level $M\subset W$ between the $2$- and the $3$-handles is diffeomorphic to both the outcome of 1-surgeries on $M_1$ along trivial circles and the outcome of 1-surgeries on $M_2$ on trivial circles. A 1-surgery on a trivial circle changes $M_i$ by connected sum with an oriented $S^2$-bundle over $S^2$.  There are two such bundles since $\pi_1(\Diff(S^2)) = \pi_1(O(3)) = \Z/2$. The twisted bundle occurs if and only if the twisted framing of the normal bundle to the trivial circle is used for the surgery.
One can prove, for example using Kirby's handle calculus, that after connected sum with $\CP$ both $S^2 \times S^2$ and $S^2 \wt{\times} S^2$ become diffeomorphic to $\CP \# \bCP \# \CP$ \cite[Corollaries~4.2~and~4.3]{Kirby-4-manifold-book}, \cite[p.~151]{Gompf-Stipsiz}.

The conclusions on the fundamental groups and relative orientations follow because these aspects are controlled through the $\xi$-structure of the bordism.
\end{proof}

\begin{remark}
  Even the null bordant class exhibits interesting behaviour.  A standard construction of a $4$-manifold with a given fundamental group $\pi$ and orientation character $w$ takes the boundary of some $5$-dimensional manifold thickening $N(K)$ of a 2-complex $K$ with $\pi_1(K) = \pi$ and $w_1(N(K)) = w$.  This is the same as doubling a suitable 4-dimensional thickening of $K$ along its boundary.  By construction $W_K:= \partial N(K)$ is null-bordant over $B\pi$ for any choice of $K$.  Thus by \cref{thm:stable} the $\CP$-stable diffeomorphism class of $W_K$ only depends on the 1-type $(\pi,w)$, and not on the precise choices of $K$ and $N(K)$.
\end{remark}

We end this section by outlining Kreck's original argument~\cite[Thm.~C]{surgeryandduality}. The normal 1-type of a manifold is determined by the data $(\pi,w_1,w_2)$. After adding one copy of $\CP$, the universal covering becomes non-spin and then the fibration $\xi \colon BSO \times B\pi\to BO$ is the normal 1-type. After adding more copies of $\CP$ to $M_1$ or $M_2$ until their signatures agree, our hypothesis gives a bordism between $M_1$ and $M_2$ over this normal 1-type. By subtraction of copies of $S^2 \times D^3$ tubed to the boundary, a cobordism can be improved to an $s$-cobordism, after allowing connected sums of the boundary with copies of $S^2 \times S^2$.  Therefore by the stable $s$-cobordism theorem~\cite{Quinn83}, the bordism class of a $4$-manifold in $\Omega_4(\xi)$ determines the diffeomorphism class up to further stabilisations with $S^2 \times S^2$. As remarked above, if necessary we may add one more $\CP$ to convert all the $S^2 \times S^2$ summands to connected sums of copies of $\CP$ and $\bCP$.

\section{Conventions} \label{sec:conventions}

Let $\pi$ be a group and write $\La:=\Z\pi$ for the group ring. For a homomorphism $w\colon \pi\to\Z/2$,
write $\La^w$ for the abelian group $\La$ considered as a $(\La,\La)$-bimodule, via the usual left action, and with the right action twisted with $w$, so that for $r \in \La$ and $g \in \pi$ we have $r \cdot g = (-1)^{w(g)}r g$. Note that $\La$ and $\La^w$ are isomorphic as left or right modules, but \emph{not} as bimodules.

Let $R$ be a ring with involution and let $N$ be an $(R,\La)$-bimodule.
We define another $(R,\La)$-bimodule $N^w := N \otimes_{\La} \La^w$. This is canonically isomorphic to the same left $R$-module $N$ with the right $\La$ action twisted with $w$.
We consider $\La$ as a ring with involution using the \emph{untwisted} involution determined by $g\mapsto g^{-1}$.

For a CW-complex $X$, or a manifold with a handle decomposition, write $\pi := \pi_1(X)$. We always assume that $X$ is connected and comes with a single 0-cell, respectively 0-handle.
The cellular or handle  chain complex $C_*(\wt{X}) \cong C_*(X;\La)$ consists of left $\La$-modules. Here we pick a base point in order to identify $C_*(\wt{X})$ with $C_*(X;\La)$.

Define the \emph{homology of $X$ with coefficients in $N$} as the left $R$-module
\[H_*(X;N) := H_*(N \otimes_{\La} C_*(X;\La)).\]
Define the \emph{cohomology of $X$ with coefficients in $N$} as the left $R$-module
\[H^*(X;N) := H^*\big(\Hom_{\La}(\ol{C_*(X;\La)},N)\big),\]
converting the chain complex into a right $\La$-module chain complex using involution on $\La$, taking $\Hom$ of right $\La$-modules, and using the left $R$-module structure of $N$ for the $R$-module structure of the outcome.

Given a chain complex $C_*$ over a ring with involution $R$, consisting of left $R$-modules, the cochain complex $C^* := \Hom_R(C_*,R)$ consists naturally of right modules.  Unless explicitly mentioned otherwise, we always convert such a cochain complex into left modules using the involution on $R$, that is $r\cdot c = c\ol{r}$.

We will consider closed manifolds to always be connected and smooth unless otherwise explicitly mentioned, and typically of dimension four.
For an $n$-dimensional closed manifold $M$ with a handle decomposition, write $M^\natural$ for the dual handle decomposition.   The handle chain complex $C_*(M;\La)$ satisfies $C_*(M^\natural;\La) \cong \La^w \otimes_{\La} C^{n-*}(M;\La)$. That is, the handle chain complex associated to the dual decomposition is equal to the cochain complex of the original decomposition defined using the twisted involution.  Let $d_i \colon C_i(M;\La) \to C_{i-1}(M;\La)$ be a differential in the chain complex.  For emphasis, we write \[d^i_w := \Id_{\La^w} \otimes d^i \colon C^{i-1} \to C^i\]
to indicate the differentials of the cochain complex obtained using the twisting, which coincide with the differentials of the dual handle decomposition.

A popular choice for coefficient module $N$ will be $\Z^w$, the abelian group $\Z$ considered as a $(\Z,\La)$-bimodule via the usual left action, and with the right action of $g \in \pi$ given by multiplication by $(-1)^{w(g)}$.
An $n$-dimensional closed manifold $M$, with fundamental group $\pi$ and orientation character $w \colon \pi \to \Z/2$, has a \emph{twisted fundamental class} $[M] \in H_n(M;\Z^w)$.

\begin{remark}\label{remark:choice-fund-class}
Observe that the orientation double cover $\wh{M}$ is canonically oriented.  Nevertheless in our context there is still a choice of fundamental class to be made. This arises from the fact that the identification $H_n(\wh{M};\Z) \cong H_n(M;\Z[\Z/2])$ requires a choice of base point. The orientation class of $\wh{M}$ maps to either $1-T$ or $T-1$ times the sum of the top dimensional handles/cells of~$M$.  Evaluating the generator $T \in \Z/2$ to $-1$ yields a homomorphism $H_n(M;\Z[\Z/2]) \to H_n(M;\Z^w)$ with the image of $[\wh{M}]$ equal to $\pm 2 [M]$.  We see that although the double cover is canonically oriented, the twisted fundamental class obtained by this procedure depends on a choice of base point, so there is a choice required.
\end{remark}

Twisted Poincar\'e duality says that taking the cap product with a fundamental class $[M] \in H_n(M;\Z^w)$ gives rise to an isomorphism
\[-\cap[M] \colon H^{n-r}(M;N^w) \ra H_r(M;N)\]
for any $r$ and any coefficient bimodule $N$.  Since $N^{ww} \cong N$, applying this to $N^w$ yields the other twisted Poincar\'e duality  isomorphism
\[-\cap[M] \colon H^{n-r}(M;N) \ra H_r(M;N^w).\]

\section{An extended Hopf sequence}\label{section:Hopf-sequence}

We present an exact sequence, extending the well-known Hopf sequence, for groups $\pi$ that satisfy $H^1(\pi;\La)=0$, where $\La := \Z\pi$.

Recall from \cref{sec:conventions} that an orientation character $w\colon \pi \to \Z/2$ endows $\Z^w:=\Z\otimes_\Lambda\Lambda^w$ with a  $(\Z,\La)$-bimodule structure.
The $\Z^w$-twisted homology of a space $X$ with $\pi_1(X)=\pi$ is defined as the homology of the chain complex $\Z^w \otimes_{\La} C_*(\wt{X})$.

In the upcoming theorem, write $\pi_2(M) ^w := \pi_2(M) \otimes_{\Lambda} \Lambda^w$,  where we consider $\pi_2(M) $ as a $\Lambda$-right module using the involution given by $g\mapsto g^{-1}$.  Then $\pi_2(M) ^w$ is a $(\Z,\La)$-bimodule, so we can use it as the coefficients in homology as in \cref{sec:conventions}.

\begin{theorem}
	\label{thm:hopfseq}
	Let $M$ be a closed 4-manifold with classifying map $c\colon M \to B\pi$ and orientation character $w \colon \pi \to \Z/2$.  If $H^1(\pi;\La) =0$
	there is an exact sequence
	\begin{align*}
	& H_4(M;\Z^w) \xrightarrow{\,c_*\,}  H_4(\pi;\Z^w)\xrightarrow{\,\partial\,}H_1(\pi;\pi_2(M) ^w) \\ \ra\, & H_3(M;\Z^w)\xrightarrow{\,c_*\,}  H_3(\pi;\Z^w)\xrightarrow{\,\partial\,} H_0(\pi;\pi_2(M) ^w) \\ \ra\, & H_2(M;\Z^w)\xrightarrow{\,c_*\,}H_2(\pi;\Z^w) \xrightarrow{\,\phantom{\partial}\,} 0.
	\end{align*}
	Moreover, the maps $\partial$ only depend on the 2-type $(\pi,w,\pi_2(M) ,k(M))$.
\end{theorem}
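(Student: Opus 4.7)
The plan is to apply the Cartan--Leray spectral sequence of the classifying map $c\colon M\to B\pi$ with twisted coefficients $\Z^w$, and read off the extended Hopf sequence from its low-degree edge maps and transgression. This spectral sequence has the form
\[E^2_{p,q}=H_p(\pi;H_q(\wt M;\Z)^w)\Longrightarrow H_{p+q}(M;\Z^w),\]
and its edge homomorphism $H_i(M;\Z^w)\to E^\infty_{i,0}\hookrightarrow E^2_{i,0}=H_i(\pi;\Z^w)$ is exactly $c_*$.

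The key input is translating the hypothesis $H^1(\pi;\La)=0$ into the vanishing of the $q=3$ row of the $E^2$ page. Since $c$ is a 2-equivalence, pullback gives an isomorphism $c^*\colon H^1(\pi;\La)\xrightarrow{\cong}H^1(M;\La)$, and twisted Poincar\'e duality on the closed 4-manifold supplies $H^1(M;\La)\cong H_3(M;\La^w)$, an abelian group naturally identified with $H_3(\wt M;\Z)$. Thus $H_3(\wt M;\Z)=0$, and together with $H_1(\wt M;\Z)=0$ from simple connectivity, the rows $q=1$ and $q=3$ of $E^2$ vanish. Hurewicz identifies $H_2(\wt M;\Z)^w\cong\pi_2(M)^w$, so in the range $p+q\le 4$ every $d_2$ differential lands in or emerges from a vanishing row and the only surviving differential is the transgression
\[d_3\colon H_p(\pi;\Z^w)=E^3_{p,0}\longrightarrow E^3_{p-3,2}=H_{p-3}(\pi;\pi_2(M)^w).\]
Splicing together the filtration short exact sequences for $H_i(M;\Z^w)$ with $i=2,3,4$ then yields the claimed eight-term exact sequence with $\partial:=d_3$.

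For the moreover statement, the Postnikov map $M\to P_2(M)$ is a 3-equivalence and fits into a map of fibrations over $B\pi$ whose fibre map $\wt M\to K(\pi_2(M),2)$ is again a 3-equivalence, hence an isomorphism on $H_2$ by Hurewicz. The induced morphism of Cartan--Leray spectral sequences is therefore an isomorphism on the $q=0$ and $q=2$ rows of $E^2$ and commutes with $d_3$, so $\partial$ may be computed in the spectral sequence of the Postnikov fibration $K(\pi_2(M),2)\to P_2(M)\to B\pi$, where the transgression is, up to sign, cap product with the classifying $k$-invariant $k(M)\in H^3(\pi;\pi_2(M))$. Hence $\partial$ depends only on the 2-type $(\pi,w,\pi_2(M),k(M))$. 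The most delicate point of the argument is the Poincar\'e duality manoeuvre in step one: one must track the $w$-twists and the sidedness of the $\La$-module structures carefully enough to confirm that the abelian group killed by $H^1(\pi;\La)=0$ really is the coefficient module appearing in the $E^2_{p,3}$ row; once that identification is pinned down, the rest is routine spectral sequence bookkeeping together with the standard Postnikov comparison.
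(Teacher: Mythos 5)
Your proposal is correct and follows essentially the same route as the paper: the Leray--Serre/Cartan--Leray spectral sequence of $\wt M\to M\to B\pi$ with $\Z^w$-coefficients, vanishing of the $q=1$ and $q=3$ rows (the latter via $H_3(\wt M;\Z)\cong H^1(M;\La)\cong H^1(\pi;\La)=0$), splicing of the filtration sequences with $\partial=d^3$, and the comparison with the fibration $K(\pi_2(M),2)\to P_2(M)\to B\pi$ for the ``moreover'' clause. The only cosmetic difference is that you track the $w$-twist through the duality step explicitly, which is harmless since $\La$ and $\La^w$ agree as one-sided modules.
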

\begin{rem}
	When replacing $M$ by its Postnikov 2-type, the sequence in \cref{thm:hopfseq} remains exact even without the assumption that $H^1(\pi;\Z\pi)=0$ by \cite[Lemma~$8^{\mathrm{bis}}$.27]{mccleary}. The proof of \cref{thm:hopfseq} is essentially the same but we repeat it here for the reader's convenience.
\end{rem}

\begin{proof}
	The Leray-Serre spectral sequence applied to the fibration $\wt{M} \to M \to B\pi$ with homology theory $H_*(-;\Z^w)$ has second page \[E^2_{p,q} = H_p(B\pi;H_q(\wt{M};\Z^w)) = H_p(B\pi; H_q(\wt{M};\Z)\otimes_\Lambda\Lambda^w).\] Here we consider $H_q(\wt{M};\Z)$ as a right $\Lambda$-module and the fact that $\wt{M}$ is simply connected means that the $w$-twisting can be taken outside the homology. The spectral sequence converges to $H_{p+q}(M;\Z^w)$.
	
	First, $H_1(\wt{M};\Z) =0$, and \[H_3(\wt{M};\Z) \cong H_3(M;\La) \cong H^1(M;\La^w) \cong H^1(M;\La) \cong H^1(\pi;\La) = 0.\]  Thus the $q=1$ and $q=3$ rows of the $E^2$ page vanish.
	Since $H_0(\wt{M};\Z) \cong \Z$, the $q=0$ row coincides with the group homology $E^2_{p,0} = H_p(\pi;\Z^w)$.  We can write $H_2(\wt{M};\Z) \cong \pi_2(\wt{M})\cong \pi_2(M) $ by the Hurewicz theorem, and the long exact sequence in homotopy groups associated to the fibration above.  Therefore the $q=2$ row reads as \[E^2_{p,2} = H_p(\pi;\pi_2(M) ^w).\]
	
	Since the $q=1$ and $q=3$ lines vanish, the $d^2$ differentials with domains of degree $q \leq 2$ vanish, so we can turn to the $E^3$ page.  We have $d^3$ differentials
	\[
	d^3_{3,0} \colon H_3(\pi;\Z^w) \ra H_0(\pi;\pi_2(M) ^w),
	\]
	\[
	d^3_{4,0} \colon H_4(\pi;\Z^w) \ra H_1(\pi;\pi_2(M) ^w).
	\]
	It is now a standard procedure to obtain the long exact sequence from the spectral sequence, whose highlights we elucidate.
	On the 2-line, the terms on the $E^\infty$ page yield a short exact sequence
	\[0 \ra \coker (d^3_{3,0}) \ra H_2(M;\Z^w) \ra H_2(\pi;\Z^w) \ra 0.\]
	On the 3-line, similar considerations give rise to a short exact sequence
	\[0 \ra \coker d^3_{4,0} \ra H_3(M;\Z^w) \ra \ker d^3_{3,0} \ra 0.\]
	Finally the 4-line gives rise to a surjection
	\[H_4(M;\Z^w) \ra \ker d^3_{4,0} \ra 0.\]
	Splice these together to yield the desired long exact sequence.
	
	It remains to argue that the $d^3$ differentials in the Leray-Serre spectral sequence only depend on the $k$-invariant of $M$. To see this, consider the map of fibrations
	\[\xymatrix{ \wt M\ar[r]\ar[d]& M\ar[r]\ar[d]& K(\pi,1)\ar[d]^=\\
		K(\pi_2(M) ,2)\ar[r]&P_2(M)\ar[r]&K(\pi,1)}\]
	induced by the 3-equivalence from $M$ to its second Postnikov section $P_2(M)$. It induces a map of the spectral sequences, and since the map $\wt M\to K(\pi_2(M) ,2)$ is an isomorphism on homology in degrees $0,1$ and $2$, it follows that the two $d^3$ differentials in the long exact sequences can be identified. Therefore, they only depend on $P_2(M)$, or equivalently, on $(\pi_1(M) ,\pi_2(M) ,k(M))$.
\end{proof}

\begin{cor}
	\label{cor:hopfseq}
	Suppose that $H^1(\pi;\La) =0$ and that $M$ is a closed $4$-manifold with 1-type $(\pi,w)$. Then the subgroup generated by $c_*[M]\in H_4(\pi;\Z^w)$ only depends on the 2-type $(\pi,w,\pi_2(M) ,k(M))$.	
\end{cor}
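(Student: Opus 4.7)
The plan is to extract the corollary directly from the extended Hopf sequence of \cref{thm:hopfseq}, using that the image of the fundamental class generates the relevant image term in homology.

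First I would observe that, because $M$ is a closed connected $4$-manifold with orientation character $w$, twisted Poincar\'e duality gives $H_4(M;\Z^w)\cong H^0(M;\Z)\cong \Z$, generated by the twisted fundamental class $[M]$. Consequently the image of $c_*\colon H_4(M;\Z^w)\to H_4(\pi;\Z^w)$ is precisely the cyclic subgroup $\langle c_*[M]\rangle$ of $H_4(\pi;\Z^w)$.

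Next I would invoke exactness of the sequence in \cref{thm:hopfseq} at the term $H_4(\pi;\Z^w)$: the image of $c_*$ equals the kernel of the boundary map
\[\partial\colon H_4(\pi;\Z^w)\ra H_1(\pi;\pi_2(M)^w).\]
Thus $\langle c_*[M]\rangle = \ker\partial$ as subgroups of $H_4(\pi;\Z^w)$.

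Finally, I would appeal to the last sentence of \cref{thm:hopfseq}, which asserts that the differential $\partial$ (arising from the $d^3$ of the Leray--Serre spectral sequence) depends only on the $2$-type $(\pi,w,\pi_2(M),k(M))$. The kernel of a map determined by the $2$-type is itself determined by the $2$-type, hence the subgroup $\langle c_*[M]\rangle = \ker\partial$ depends only on $(\pi,w,\pi_2(M),k(M))$. There is no real obstacle here; the corollary is essentially a one-line consequence of the long exact sequence once one identifies $H_4(M;\Z^w)$ with $\Z\cdot [M]$ via Poincar\'e duality.
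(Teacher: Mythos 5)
Your argument is the same as the paper's: identify $\langle c_*[M]\rangle$ with the image of $c_*\colon H_4(M;\Z^w)\to H_4(\pi;\Z^w)$, use exactness of the sequence in \cref{thm:hopfseq} to equate that image with $\ker\partial$, and invoke the final sentence of \cref{thm:hopfseq} that $\partial$ depends only on the $2$-type. Your added remark that $H_4(M;\Z^w)\cong\Z$ is generated by $[M]$ (via Poincar\'e duality) is a slight expansion of a step the paper leaves implicit, but the route is identical.
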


\begin{proof}
	The subgroup generated by $c_*[M]$ is precisely the image of $c_*\colon H_4(M;\Z^w)\to H_4(\pi;\Z^w)$. By \cref{thm:hopfseq} the image of $c_*$ is the same as the kernel of the map $\partial\colon H_4(\pi;\Z^w)\to H_1(\pi;\pi_2(M) ^w)$. Since the latter only depends on the 2-type of $M$, so does the image of $c_*$.
\end{proof}

In particular, if $H_4(\pi;\Z^w)$ is torsion-free and $H^1(\pi;\La)=0$, then since the subgroup generated by $c_*[M]$ determines $c_*[M]$ up to sign, \cref{cor:hopfseq} implies that the 2-type of $M$ suffices to determine its $\CP$-stable diffeomorphism class.
Since a group with one end has $H^1(\pi;\La)$, this proves the special case of \cref{thm:k}~\eqref{thm:k:item2}, if in addition we assume that $H_4(\pi;\Z^w)$ is torsion-free. 
We will also make use of \cref{cor:hopfseq} to deduce \cref{thm:k}~\eqref{thm:k:item3}, but we postpone this discussion until the end of \cref{sec:k-ext}, so that we can collect the facts needed to prove \cref{thm:k} in one place.

\section{Computing fourth homology as an extension}
\label{sec:pri}
In this section we prove \cref{prop:mainpri2}, relating the $\CP$-stable classification to the stable isomorphism class of the second homotopy group as an extension.

\begin{definition}
\label{def:stableiso}
For a ring $R$, we say that two $R$-modules $P$ and $Q$ are \emph{stably isomorphic}, and write $P \scong Q$, if there exist non-negative integers $p$ and $q$ such that $P \oplus R^p \cong Q \oplus R^q$.
\end{definition}

First we will construct an isomorphism using a closed $4$-manifold $M$ with fundamental group $\pi$ and later recast it in terms of homological algebra to show that it is independent of $M$.
We will use the following description of the extension group.

\begin{rem}
	\label{rem:ext}
	Let $R$ be a ring and let $N$ and $L$ be $R$-modules. Recall that $\Ext_R^1(L,N)$ can be described as follows; see e.g.~\cite[Chapter~IV.7]{HS}. Choose a projective resolution
\[\cdots \ra P_2\xrightarrow{\,p_2\,} P_1\xrightarrow{\,p_1\,}P_0\ra L \ra 0\]
 of $L$ and consider the induced dual sequence
	\[
	0\ra \Hom_R(L,N)\ra \Hom_R(P_0,N)\xrightarrow{\,(p_1)^*}\Hom_R(P_1,N)\xrightarrow{\,(p_2)^*}\Hom_R(P_2,N).
	\]
	Then there is a natural isomorphism
\[\theta \colon \Ext_R^1(L,N) \to H_1(\Hom_R(P_*,N)).\]
This isomorphism is given as follows: for an extension $0\to N\to E\to L\to 0$, choose a lift $P_0\to L$ to a map $P_0\to E$ and note that the composition $P_1\xrightarrow{p_1}P_0\to E$ factors through a map $P_1\to N$. This element in $\Hom_R(P_1,N)$ is a $1$-cocycle and represents the image of the extension $E$ in $H^1(\Hom_R(P_*,N))$. Its cohomology class does not depend on the choice of lift.
\end{rem}

Let $\La = \Z\pi$, and let $(D_*,d_*^D)$ be a free resolution of $\Z$ as a $\La$-module, where $\pi$ acts trivially. Standard dimension shifting~\cite[Chapter~III.7]{brown} gives an isomorphism $H_4(\pi;\Z^w)\xrightarrow{\cong} H_1(\pi;\ker d_2^{D,w})$. Here the superscript $w$ denotes twisting the coefficients using $w\colon \pi\to \Z/2$ as described in \cref{sec:conventions}.

Given a closed $4$-manifold $M$ with fundamental group $\pi$ and orientation character $w$, we have isomorphisms
\[H_1(\pi;\ker d_2^{D,w})\xleftarrow{c_*, \cong}H_1(M;\ker d_2^{D,w})\xleftarrow{PD,\cong}H^3(M;\ker d_2^D),\]
where $c\colon M\to B\pi$ denotes the classifying map and $PD$ denotes Poincar\'e duality. Let $P_2(M)$ be the Postnikov 2-type of $M$.

\begin{lemma}\label{lemma:some-induced-map}
 The induced map $H^3(P_2(M);\ker d_2^D)\to H^3(M;\ker d_2^D)$ is an isomorphism.
\end{lemma}

\begin{proof}
 Since
 the map $M\to P_2(M)$ is $3$-connected, the induced map $H^2(P_2(M);A) \xrightarrow{\cong} H^2(M;A)$ is an isomorphism and the map $H^3(P_2(M);A) \hookrightarrow H_3(M;A)$ is injective for any coefficient system $A$. Using this and $H^3(M;D_2)\cong H_1(M;D_2)=0$, we obtain the following diagram associated with the short exact sequence $0 \to \ker d_2^D\to D_2\to \im d_2^D \to 0$.
\[
\xymatrix{
H^2(P_2(M);\im d_2^D)\ar[d]^\cong\ar[r]&H^3(P_2(M);\ker d_2^D)\ar[r]\ar@{^(->}[d]&H^3(P_2(M);D_2)\ar@{^(->}[d]\\
H^2(M;\im d_2^D)\ar@{->>}[r]&H^3(M;\ker d_2^D)\ar[r]&0\\
}
\]
It follows from commutativity of the left square that the middle vertical map $H^3(P_2(M);\ker d_2^D)\to H^3(M;\ker d_2^D)$ is surjective, and thus an isomorphism.
\end{proof}

By the Hurewicz theorem, $0=\pi_3(\wt{P_2(M)})\to H_3(\wt{P_2(M)};\Z)\cong H_3(P_2(M);\La)$ is onto, so $H_3(P_2(M);\La)=0$. Hence
\[C_4^{P_2(M)}\xrightarrow{d_4^{P_2(M)}} C_3^{P_2(M)}\xrightarrow{d_3^{P_2(M)}} C_2^{P_2(M)}\to \coker d_3^{P_2(M)} \to 0\]
is exact, where $(C_*^{P_2(M)},d_*^{P_2(M)})$ denotes the cellular $\La$-chain complex of $P_2(M)$.
This implies that
\begin{equation}\label{eqn:map-from-theta}
  H^3(P_2(M);\ker d_2^D) \cong \Ext_{\La}^1(\coker d_3^{P_2(M)},\ker d_2^D),
\end{equation}
 using the description from \cref{rem:ext}. As we can obtain $P_2(M)$ from $M$ by attaching cells of dimension $4$ and higher, we have $d_3^{P_2(M)}=d_3^M$.

We obtain the chain of isomorphisms
\begin{align}
\label{eq:theiso}
H_4(\pi;\Z^w)&\xrightarrow{\cong}H_1(\pi;\ker d_2^{D,w})\xleftarrow{c_*, \cong}H_1(M;\ker d_2^{D,w})\xleftarrow{PD;\cong}H^3(M;\ker d_2^D) \\
&\xleftarrow{\cong}H^3(P_2(M);\ker d_2^D)\cong \Ext_{\La}^1(\coker d_3^M,\ker d_2^D). \nonumber
\end{align}

We will now recast this isomorphism using homological algebra. Let $(D_*,d_*^D)$ and $(D'_*,d_*^{D'})$ be free resolutions of $\Z$ as a $\La$-module with $D'_i$ finitely generated for $i=0,1,2$. Such a resolution exists since the fundamental group of a closed manifold is finitely presented. 
Using the resolution $D'_*$, we have
\begin{align*}
  H_4(\pi;\Z^w)& \cong H_1(\pi;\ker d_2^{D,w})\cong H_1(D_*'\otimes_\La \ker d_2^{D,w})\cong H_1(D^{\prime,w}_*\otimes_{\La}\ker d_2^D) \\ &\cong H_1(\Hom_{\La}(D^{\prime,*}_w,\ker d_2^{D})).
\end{align*}
The last isomorphism uses that $D_i'$ is finitely generated for $i=0,1,2$.

Let $P_* \to \coker d^{D^\prime,2}_{w}$ be a free resolution.
As in \cref{rem:ext}, we have an isomorphism
\[\theta \colon \Ext_{\La}^1(\coker d^{D^\prime,2}_{w},\ker d_2^{D}) \xrightarrow{\cong} H^1(\Hom_\La(P_*,\ker d_2^{D})).\]
By the fundamental lemma of homological algebra, there is a chain map $D_w^{\prime,2-*} \to P_*$, unique up to chain homotopy, as follows:
\[\xymatrix @R-0.3cm {0 \ar[r] & D_w^{\prime,0} \ar[r]^{d^{D',1}_w} \ar[d] & D_w^{\prime,1} \ar[r]^{d^{D',2}_w} \ar[d] & D_w^{\prime,2} \ar@{->>}[r] \ar[d] & \coker d^{D^\prime,2}_{w} \\
\cdots \ar[r] & P_2 \ar[r] & P_1 \ar[r] & P_0 \ar@{->>}[ur] & &
}\]
 This induces a homomorphism
\[\Xi \colon H^1(\Hom_\La(P_*,\ker d_2^{D})) \to H^1(\Hom_\La(D^{\prime,2-*}_w,\ker d_2^D))\cong H_1(\Hom_{\La}(D^{\prime,*}_w,\ker d_2^{D})),\] so composing this with $\theta$ we obtain a homomorphism $\Ext_{\La}^1(\coker d^{D^\prime,2}_{w},\ker d_2^D)\to H_1(\Hom_{\La}(D^{\prime,*}_w,\ker d_2^{D}))$.

\begin{lemma}\label{lem:ext-H1}
  The homomorphism \[\Xi \colon H^1(\Hom_\La(P_*,\ker d_2^{D})) \to H_1(\Hom_{\La}(D^{\prime,*}_w,\ker d_2^{D}))\]
  is an isomorphism.
\end{lemma}

If $D_w^{\prime,0} \xrightarrow{} D_w^{\prime,1} \xrightarrow{}  D_w^{\prime,2}$ happens to be exact, in other words if $H^1(\pi;\La)=0$, then the lemma is straightforward because $D^{\prime,2-*}$ is the start of another free resolution of $\coker d^{D^\prime,2}_{w}$. In general this sequence is not exact.
We postpone the proof of the lemma in order to first discuss some of its consequences.

Consider the composition
\begin{align}
\label{eq:theiso2}
\Psi(D_*,D_*') \colon  H_4(\pi;\Z^w) &
\cong   H_1(\Hom_{\La}(D^{\prime,*}_w,\ker d_2^{D})) \nonumber \\
& \cong  H^1(\Hom_\La(P_*,\ker d_2^{D})) \nonumber \\
 &\cong  \Ext_{\La}^1(\coker d^{D^\prime,2}_{w},\ker d_2^D),
\end{align}
using the inverse of the isomorphism $\Xi$ from \cref{lem:ext-H1}.  The composition $\Psi(D_*,D_*')$ is natural in $D_*$ and $D_*'$.
This implies the following lemma.

\begin{lemma}
	\label{lem:naturality-ext}
	For all chain homotopy equivalences $D_*\xrightarrow{f} E_*$ and $D_*'\xrightarrow{f'} E_*'$ of free resolutions, with $D'_*,E'_*$ finitely generated for $i=0,1,2$, the induced map $$\Ext_{\La}^1((f')^*,f_*)\colon \Ext_{\La}^1(\coker d^{D^\prime,2}_{w},\ker d_2^D)\to \Ext_{\La}^1(\coker d^{E^\prime,2}_{w},\ker d_2^E)$$ is an isomorphism and $\Psi(E_*,E_*')=\Ext_{\La}^1((f')^*,f_*)\circ \Psi(D_*,D_*')$.
\end{lemma}

\cref{lem:naturality-ext} together with \eqref{eq:theiso2} implies \cref{prop:mainpri1}.
We will now explain how \eqref{eq:theiso2} can be identified with \eqref{eq:theiso}.
Let $M$ be a closed, smooth $4$-manifold with a 2-equivalence $c \colon M \to B\pi$ and an element $w \in H^1(\pi;\Z/2)$ such that $c^*(w) = w_1(M)$. Consider a handle decomposition of $M$ with a single $0$-handle and a single $4$-handle. Let $(C^M_*,d_*^M)$ be the corresponding $\La$-chain complex. Let $(C^\natural_*,\delta_*)$ denote the $\La$-chain complex for the dual handle decomposition. Observe that under the canonical identification of $C^{\natural}_i = \La^w\otimes_{\La}C^{4-i}:=C^{4-i}_w$, we have $\delta_i = d^{5-i}_{M,w}$ and $\delta^i_w = d^M_{5-i}$. Hence picking $D_*$ and $D'_*$ so that $D_i:= C_i^M$ and $D'_i:= C^\natural_w$ for $i=0,1,2$, the isomorphism $\Psi(D_*,D'_*)$ can be identified with \eqref{eq:theiso}.

Now we begin the promised postponed proof of \cref{lem:ext-H1}.

\begin{lemma}
	\label{lem:kerexact}
	Let $D_2\xrightarrow{d_2} D_1\xrightarrow{d_1} D_0$ be an exact sequence of finitely generated projective $\La$-modules, and let $f\colon L\to D^1$ be a $\La$-homomorphism such that $L\xrightarrow{f} D^1\xrightarrow{d^2}D^2$ is exact.  Then
	\[D_2\xrightarrow{\,d_2\,}D_1\xrightarrow{\,f^*}\Hom_{\La}(L,\La)\]
	is also exact.
\end{lemma}

\begin{proof}
	First note that $f^* \circ d_2 = (d^2 \circ f)^*=0$, and thus $\im d_2\subseteq \ker f^*$. We need to show that $\ker f^* \subseteq \im d^2$. Dualise $D_2\xrightarrow{d_2} D_1\xrightarrow{d_1} D_0$ we have that $d^2 \circ d^1 =0$. Using this and the hypothesis on $f$ we have a commutative diagram, showing that both $d^1$ and $f$ factor through $\ker d^2$:
	\[
	\xymatrix @R-0.2cm {D^0\ar[dr]^-{d^1} \ar[d] & & \\ \ker d^2 \enskip \ar@{^{(}->}[r] &  D^1 \ar[r]^-{d^2} & D^2.   \\
		L \ar@{->>}[u] \ar[ur]_-{f} && }
	\]
	Dualise, and identify $D_0, D_1$ and $D_2$ with their double duals, to obtain the diagram:
	\[
	\xymatrix @R-0.2cm {
& & D_0 \\
D_2\ar[r]^-{d_2}& D_1\ar[ur]^-{d_1}\ar[dr]_-{f^*} \ar[r] & \Hom_{\La}(\ker d^2,\La) \ar[u] \ar@{^{(}->}[d]\\
& &  \Hom_{\La}(L,\La).
}\]
	Since $\Hom_{\La}(-,\La)$ is left exact, $\Hom_{\La}(\ker d^2,\La)\to \Hom_{\La}(L,\La)$ is injective as shown.
	It now follows from the diagram that every element in the kernel of $f^* \colon D_1\to \Hom_{\La}(L,\La)$ is also in the kernel of $d_1$, and hence by exactness of $D_*$ lies in the image of $d_2$. This shows that $\ker f^* \subseteq \im d^2$ as desired, which completes the proof of the lemma.
\end{proof}

The next lemma implies \cref{lem:ext-H1} by taking $N:=\ker d_2^D$ and $E:= D'_w$.

\begin{lemma}\label{lem:ext}
	Let $E_2\xrightarrow{d_2} E_1\xrightarrow{d_1} E_0$ be an exact sequence of finitely generated projective $\La$-modules. Let $N$ be a submodule of a finitely generated free $\La$-module $F$. Let $P_* \to \coker d^2$ be a projective resolution. Then
\[\Xi \colon H^1(\Hom_\La(P_*,N)) \to H^1(\Hom_\La(E^{2-*},N))\cong H_1(\Hom_{\La}(E^{*},N))\]
is an isomorphism.
\end{lemma}

\begin{proof}
By the fundamental lemma of homological algebra, we can assume that $P_1 = E^1 \xrightarrow{d^2} P_0 = E^2$, and obtain a chain map $E^{2-*} \to P_*$, giving a commutative diagram
\[\xymatrix{ & E^0 \ar[r]^{d^{1}} \ar[d]_{\alpha} & E^{1} \ar[r]^{d^{2}} \ar@{=}[d] & E^{2} \ar@{->>}[r] \ar@{=}[d] & \coker d^{2} \\
\cdots \ar[r] & P_2 \ar[r] \ar[ur]^{\beta} & P_1 \ar[r] & P_0 \ar@{->>}[ur] & &
}\]
where $\beta$ is defined by the composition $P_2 \to P_1 = E^1$.
We will consider three cases: (i) $N=\Z\pi$; (ii) $N$ is a finitely generated free module; and (iii) $N$ is a submodule of a finitely generated free module~$F$.

For case (i), we identify $\Hom_{\La}(E^i,\Z\pi)$ with $E_i$, from which it follows that $H_1(\Hom_{\La}(E^*,\Z\pi))=0$. By \cref{lem:kerexact} with $D_i=E_i$ for $i=0,1,2$ and $L=P_2$, we deduce that $H^1(\Hom_{\La}(P_*,\Z\pi))$ is also trivial.

For case (ii), using additivity of both sides under direct sum, the statement follows immediately from case (i).

Finally we consider case (iii). 
It suffices to show that the kernel of $\beta_N^*$, shown in the next diagram, agrees with the kernel of $d^{1,*}_N \colon \Hom_{\La}(E^1,N)\to \Hom_\La(E^0,N)$.
Consider the diagram whose rows come from applying $\Hom_{\La}(-,M)$ to the factorisation $d^2 = \beta \circ \alpha$ for $M=N,F$:
\[\xymatrix @C-0.9cm {
	d^{1,*}_N \colon & \Hom_\La(E^1,N)\ar@{^(->}[d]\ar[rrrrrr]^-{\beta_N^*} &&&&&& \Hom_\La(P_2,N)\ar@{^(->}[d]\ar[rrrrrr]^-{\alpha_N^*} &&&&&& \Hom_\La(E^0,N)\ar@{^(->}[d]\\
	d^{1,*}_F \colon & \Hom_\La(E^1,F)\ar[rrrrrr]^-{\beta_F^*} &&&&&&\Hom_\La(P_2,F)\ar[rrrrrr]^-{\alpha_F^*} &&&&&&\Hom_\La(E^0,F).
}\]
By case (ii), in the bottom row the kernels of $d^{1,*}_F \colon \Hom_\La(E^1,F)\to\Hom_\La(E^0,F)$ and $\beta_F^* \colon \Hom_\La(E^1,F)\to\Hom_\La(P_2,F)$ both equal the image of $\Hom_{\La}(E^2,F)$, and in particular are equal. Now it follows from an easy diagram chase, using injectivity of the middle vertical map, that the kernels of $\beta_N^*$ and $d^{1,*}_N$ also agree, as desired.	This completes the proof of case (iii) and therefore of \cref{lem:ext}.
\end{proof}

	In order to give a uniform treatment in the following description of iterated Bockstein homomorphisms, define $d_0^M := \varepsilon \colon C_0^M \to C^M_{-1} := \Z$ and write $d_{-1}^M \colon \Z \to 0$.
	Then \[C_3^M \xrightarrow{d_3^M} C_2^M \xrightarrow{d_2^M} C_1^M \xrightarrow{d_1^M} C_0^M \xrightarrow{d_0^M} C_{-1}^M \xrightarrow{d_{-1}^M} 0.\]
	is exact at $C_i^M$ for $i \leq 1$.
	For every $i$ we have a short exact sequence
	\[0 \to \ker d_i^M \to C_i^M \xrightarrow{d_i^M} \im d_i^M \to 0,\]
	which for $i =0,1,2$ by exactness yields a short exact sequence
	\[0 \to \ker d_i^M \to C_i^M \xrightarrow{d_i^M}  \ker d_{i-1}^M \to 0.\]
	
	The isomorphism $H_4(\pi;\Z^w)\to H_1(\pi;\ker d^{M,w}_2)$ from dimension shifting is given by iterating the Bockstein homomorphism $H_{i+1}(\pi;\ker d_{2-i}^{M,w})\to H_i(\pi;\ker d_{3-i}^{M,w})$ for the above short exact sequences twisted by $w$. In order to trace the effect of this isomorphism in \cref{thm:implies-prim-theorems} below, we have to understand the iterated Bockstein in the following situation.

\begin{lemma}\label{lem:tracing-dim-shifting}
The iterated Bockstein $\beta^3\colon H^0(M;\Z) \to H^3(M;\ker d_2^M)$ for the above short exact sequences sends the class of the augmentation $\varepsilon \colon C^M_0\cong \La\to \Z$ to the class of $d^M_3 \colon C_3^M \to \ker d_2^M$.
\end{lemma}

\begin{proof}
Let $\beta \colon H^i(M;\ker d_{i-1}^M) \to H^{i+1}(M;\ker d_i^M)$ be the Bockstein homomorphism.
We claim that $\beta([d_i^M])= [d_{i+1}^M]$. Then it will follow using this for $i=0,1,2$ that $[\varepsilon] = [d_0^M] \in H^0(M;\ker d_{-1}^M) = H^0(M;\Z)$ is sent
to $[d_3^M] \in H^3(M;\ker d_2^M)$, as desired.
The claim follows from studying the definition of the Bockstein connecting homomorphism via the following diagram.
\[\xymatrix{ & \Hom_{\La}(C_i^M,C_i^M) \ar[r]^-{d_i^M \circ -} \ar[d]^-{-\circ d_{i+1}^M } & \Hom_{\La}(C_i^M,\ker d_{i-1}^M) \\
\Hom_{\La}(C_{i+1}^M,\ker d_{i}^M) \ar[r] & \Hom_{\La}(C_{i+1}^M,C_i^M) & }\]
Here $d_i^M \in \Hom_{\La}(C_i^M,\ker d_{i-1}^M)$ lifts to $\Id_{C_i^M} \in \Hom_{\La}(C_i^M,C_i^M)$, which is sent to $d^M_{i+1} \in \Hom_{\La}(C_{i+1}^M,\ker d_{i}^M)$, as claimed.
\end{proof}

For the proof of \cref{prop:mainpri2}, we now want to understand $\pi_2(M)$ as an extension class.
As before, let $M$ be a closed, smooth $4$-manifold with a 2-equivalence $c \colon M \to B\pi$ and an element $w \in H^1(\pi;\Z/2)$ such that $c^*(w) = w_1(M)$. Consider a handle decomposition of $M$ with a single $0$-handle and a single $4$-handle. Let $(C^M_*,d_*^M)$ be the corresponding $\La$-chain complex.
For a finite, connected 2-complex $K$ with fundamental group $\pi$, Hambleton showed \cite[Theorem 4.2]{hambleton-gokova} that $\pi_2(M) $ is stably isomorphic as a $\La$-module to an extension $E$ of the form
\[
0\ra H_2(K;\La)\ra E \ra H^{2}(K;\La^w)\ra 0.
\]
Note that in \cite[Theorem 4.2]{hambleton-gokova} only the oriented case was considered.
To identify the equivalence class of this extension with $c_*[M]\in H_4(\pi;\Z^w)$, which is the goal of this section, we need the following version of this theorem.

Let $i\colon \ker d_2^M\hookrightarrow C_2^M$ and $i'\colon \ker d_2^M/\im d_3^M\hookrightarrow \coker d_3^M$ denote the inclusions and let $p\colon \ker d_2^M \twoheadrightarrow \ker d^M_2/\im d^M_3$ and $p'\colon C^M_2\twoheadrightarrow \coker d^M_3$ denote the projections.

\begin{proposition}\label{thm:ext}
There is a short exact sequence
\[0\ra \ker d^M_2\xrightarrow{(i,-p)^T} C^M_2\oplus H_2(C^M_*)\xrightarrow{(p',i')} \coker d^M_3 \ra 0.\]
\end{proposition}

\begin{proof}
Consider the diagram
\[\xymatrix{
\ker d_2^M\ar[r]^-i\ar[d]^p&C^M_2\ar[d]^{p'}\\
H_2(C^M_*) =\ker d^M_2/\im d^M_3\ar[r]^-{i'}&\coker d^M_3.
}\]
It is straightforward to check that this square is a pullback as well as a push out.
Therefore, we obtain the claimed short exact sequence.
\end{proof}

We explain why \cref{thm:ext} coincides with \cite[Theorem 4.2]{hambleton-gokova}. 
In that reference $\coker d^M_3$ is replaced with $H^2(K;\La^w)$, and $\ker d^M_2$ is replaced with $H_2(K;\La)$, where~$K$ is a finite 2-complex with $\pi_1(K)=\pi$.

To see why one can make these replacements, we need the following lemma.
\begin{lemma}
	\label{lem:stable}
	Let $K_1, K_2$ be finite $2$-complexes with 2-equivalences $K_i\to B\pi$ for $i=1,2$. Then there exist $p,q\in \N_0$ such that $K_1\vee \bigvee_{i=1}^p S^2$ and $K_2\vee\bigvee_{i=1}^q S^2$ are homotopy equivalent over $B\pi$. In particular $H_2(K_1;\La)$ and $H_2(K_2;\La)$ are stably isomorphic $\La$-modules and so are $H^2(K_1;\La^w)$ and $H^2(K_2;\La^w)$.
\end{lemma}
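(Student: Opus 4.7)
The plan is to reduce the statement to the classical picture of Tietze transformations on finite group presentations. First, after collapsing a maximal tree in each 1-skeleton, one may assume that $K_i$ has a single 0-cell and is therefore precisely the presentation 2-complex $K_{P_i}$ of some finite presentation $P_i$ of $\pi$. The given 2-equivalence $K_i \to B\pi$ fixes an identification $\pi_1(K_{P_i}) \cong \pi$, so $P_1$ and $P_2$ become presentations of the same group.

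I would then invoke Tietze's theorem: any two finite presentations of $\pi$ are connected by a finite sequence of Tietze moves, namely (T1) adding a new generator together with a defining relation $g = w$, (T2) its inverse, (T3) adding a relation that is a consequence of the existing ones, and (T4) its inverse. Analysing each move at the level of presentation complexes, moves (T1) and (T2) preserve the homotopy type since the new 1-cell is a free face of the new 2-cell and can be elementarily collapsed. Moves (T3) and (T4) each wedge or strip off a single $S^2$: a relator that is a consequence of the existing ones represents the trivial element of $\pi = \pi_1(K_P)$, hence its attaching circle is null-homotopic in $K_P$, and attaching a 2-cell along a null-homotopic map produces $K_P \vee S^2$ up to homotopy.

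Composing these local equivalences along a sequence of Tietze moves from $P_1$ to $P_2$ involving $a$ moves of type (T3) and $b$ moves of type (T4), a straightforward induction on the length of the sequence will yield $K_1 \vee \bigvee_{i=1}^{b} S^2 \simeq K_2 \vee \bigvee_{i=1}^{a} S^2$. Since $B\pi$ is aspherical, a map from any CW complex to $B\pi$ is determined up to homotopy by the induced homomorphism on $\pi_1$, so this homotopy equivalence is automatically over $B\pi$. The claimed stable isomorphisms of $H_2(K_i;\La)$ and of $H^2(K_i;\La^w)$ then follow, because a homotopy equivalence induces isomorphisms on (co)homology with arbitrary local coefficient systems and $H_2(S^2;\La) \cong \La \cong H^2(S^2;\La^w)$.

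The only delicate point is the analysis of moves (T3) and (T4): one must carefully distinguish that while the attaching circle of a redundant relator is nontrivial in the free group $\pi_1(K_P^{(1)})$, the hypothesis that it is a consequence of the existing relations means precisely that it becomes trivial in $\pi_1(K_P) = \pi$, and hence is null-homotopic as a map into $K_P$. The standard fact that $K_P \cup_\phi D^2 \simeq K_P \vee S^2$ whenever $\phi$ is null-homotopic then closes the step.
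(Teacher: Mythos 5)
Your proof is correct and follows essentially the same route as the paper's, which also collapses maximal trees to reduce to presentation complexes and then realises a sequence of Tietze transformations by elementary expansions/collapses and wedging on $2$-spheres. You supply more detail than the paper (which simply cites the standard reference), and the details check out; the only cosmetic quibble is that your final sphere count has $a$ and $b$ interchanged, which is immaterial since the statement only asserts the existence of some $p,q$.
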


\begin{proof}
	After collapsing maximal trees in the 1-skeletons of $K_i$, we can assume that both $K_i$ have a unique $0$-cell.
	The lemma follows from the existence of Tietze transformations that relate the resulting presentations of the group~$\pi$, by realising the sequence of  transformations on the presentation by cellular expansions and collapses. See for example \cite[(40)]{lms197}.
\end{proof}

It follows that $\ker d^M_2=H_2(M^{(2)};\La)$ is stably isomorphic to $H_2(K;\La)$.
Let $M^{\natural}$ be the manifold~$M$ endowed with the dual handle decomposition, and let $(C^{\natural}_*, \delta_*)$ be its $\La$-module chain complex as before.
Fix a choice of (twisted) fundamental class $[M]$ in $H_4(M;\Z^w)$.
Use this choice to identify $\ker \delta_2 = \ker d^3_{M,w}$ and $\coker d^M_3 = \coker \delta^2_w$.
It then follows from Lemma~\ref{lem:stable} that
\[
 \ker d^M_2 = H_2(M^{(2)};\La) \scong H_2((M^{\natural})^{(2)};\La) = \ker \delta_2 = \ker d^3_{M,w},
 \]
 \[
 \coker d^M_3 = \coker \delta^2_w = H^2((M^{\natural})^{(2)};\La^w) \scong H^2(M^{(2)};\La^w) = \coker d^2_{M,w}.
 \]
 We can now prove the following proposition, which is the same as \cref{prop:mainpri2}. Here we use the same choice of $[M]$ that we just fixed.

\begin{proposition}\label{thm:implies-prim-theorems}
Let $M$ be a closed 4-manifold with a $2$-equivalence $c\colon M\to B\pi$ and let $(C^M_*,d^M_*)$ be the chain complex from a handle decomposition of~$M$. Let $D_*$ be a free resolution of $\Z$ with $D_i=C_i^M$ for $i=0,1,2$ and let $D'_*$ be a free resolution of $\Z$ with $D'_i=C^\natural_i$ for $i=0,1,2$.  Then the isomorphism $\Psi(D_*,D'_*)$ takes $c_*[M] \in H_4(\pi;\Z^w)$ to the equivalence class of the extension from \cref{thm:ext}:
\[
0\ra \ker d^M_2 \ra C^M_2\oplus H_2(C^M_*) \ra \coker d^M_3\ra 0.
\]
\end{proposition}

\begin{proof}
	We use the identification of $\Psi(D_*,D'_*)$ with \eqref{eq:theiso}. Using the description of the extension group from \cref{rem:ext}, the extension from \cref{thm:ext} in $\Ext_{\La}^1(\coker d_3^M,\ker d_2^M)$ is represented by $d_3^M\in \Hom_\La(C_3^M,\ker d_2^M)$. It remains to show that \eqref{eq:theiso} sends $c_*[M]$ to the class $[d_3^M] \in \Ext_{\La}^1(\coker d_3^M,\ker d_2^M)$.

Consider the diagram
\[\xymatrix{
H_4(M;\Z^w)\ar[d]^{c_*}& H^0(M;\Z)\ar[r]^-{\beta^3}\ar[l]^-{PD}_-\cong &H^3(M;\ker d_2^M)\ar[d]^\cong_-{PD}\\
H_4(\pi;\Z^w)\ar[r]^-\cong& H_1(\pi;\ker d_2^w)\ar[r]^-\cong&H_1(M;\ker d_2^{M,w}).
}\]
The square commutes by naturality of Poincar\'e duality in the coefficients. Under Poincar\'e duality the fundamental class $[M]\in H_4(M;\Z^w)$ is mapped to the class in $H^0(M;\Z)$ represented by the augmentation $\varepsilon\colon C^M_0\cong \La\to \Z$.  By \cref{lem:tracing-dim-shifting},  
$\beta^3$ maps $[\varepsilon]$ to $[d_3^M] \in H^3(M;\ker d_2^M)$. So $[M]$ is sent by the top route to $[d_3^M]$.  On the other hand the bottom-then-up route $H_4(\pi;\Z^w) \to H^3(M;\ker d_2^M)$ is the first three isomorphisms of \eqref{eq:theiso}.  So by commutativity these send $c_*[M]$ to $[d_3^M]$.  Finally, since $P_2(M)$ is obtained from $M$ by attaching cells of dimension $4$ and higher, $[d_3^M]\in H^3(P_2(M);\ker d_2^M)$ is a pre-image of $[d_3^M]\in H^3(M;\ker d_2^M)$. It follows that $c_*[M]$ is mapped under \eqref{eq:theiso} to the extension represented by $[d_3^M]$, as desired.
\end{proof}

\section{Detecting the extension class}
\label{sec:k-ext1}

This section is entirely algebraic.
Let $\hCh_2(\pi)$ denote the category whose objects are free $\La$-chain complexes $C_*$ concentrated in non-negative degrees such that $H_n(C_*)=0$ for $n\neq 0,2$, together with a fixed identification $H_0(C_*)=\Z$, and whose morphisms are chain maps that induce the identity on $H_0$, considered up to chain homotopy.
Let $\La^n[2]$ denote the chain complex given by the based free $\La$-module $\La^n$ concentrated in degree $2$.
We call two chain complexes $C_*,C'_*$ stably isomorphic if there exists $p$, $q$ such that $C_*\oplus \La^p[2]$ and $C'_*\oplus\La^{q}[2]$ are isomorphic in $\hCh_2(\pi)$, meaning that they are chain homotopy equivalent.  We denote the set of stable isomorphisms classes by $\sCh_2(\pi)$.

For the rest of this section fix a free resolution $(C_*,d_*)$ of $\Z$ as a $\La$-module with $C_*$ finitely generated for $*=0,1,2$. Also fix a complex
\[\cdots \to B_4\xrightarrow{\,b_4\,} B_3\xrightarrow{\,b_3\,} B_2\]
of free $\La$-modules that is exact at $B_n$ for $n\geq 3$, with $B_2$ and $B_3$ finitely generated.
Furthermore, assume that the dual complex $B^2\xrightarrow{b^3}B^3\xrightarrow{b^4}B^4 \to \cdots$ is exact at $B^3$, or equivalently that $\Ext_{\La}^1(\coker b_3,\La)=0$.

Next we define a map $\Theta \colon \Ext_{\La}^1(\coker b_3,\ker d_2) \to \sCh_2(\pi).$  
Recall from \cref{rem:ext} that any element of $\Ext_{\La}^1(\coker b_3,\ker d_2)$ can be represented by a map $f\colon B_3\to \ker d_2$ with $f\circ b_4=0$. For such a map, let $C_f$ be the $\La$-chain complex
\[\xymatrix{\cdots\ar[r]&B_4\ar[r]^-{b_4}&
	B_3\ar[r]^-{(f,b_3)^T}&
	C_2\ar[r]^-{(d_2,0)}\oplus B_2&
	C_1\ar[r]^-{d_1}&
	C_0.}\]
Note that $H_*(C_f)=0$ for $*\neq 0,2$ and that $H_0(C_f)=\Z$, thus $C_f$ is an element in $\sCh_2(\pi)$.
This is our candidate for $\Theta([f])$. We need to check that it is well-defined.

\begin{rem}
	As in \cref{thm:ext}, there is a short exact sequence
	\begin{equation}
		\label{eq:ex}
		0\to \ker d_2\oplus B_2\to H_2(C_f)\oplus C_2\oplus B_2\to \coker (f,b_3)^T \to 0.
	\end{equation}
	In the proof of \cref{thm:injective} we show that $\coker (f,b_3)^T$ is isomorphic to $C_2\oplus \coker b_3$. Thus stably $H_2(C_f)$ represents an element of $\Ext_\La^1(\coker b_3,\ker d_2)$. Moreover, using the free resolution
\[B_4\to B_3\xrightarrow{(0,b_3)^T}C_2\oplus B_2\to C_2\oplus \coker b_3,\]
it can be shown that \eqref{eq:ex}, considered as an element of $\Ext_\La^1(C_2\oplus \coker b_3,\ker d_2\oplus B_2)$, is represented by $(f,b_3)^T\colon B_3\to \ker d_2\oplus B_2$,  or equivalently by $(f,0)^T\colon B_3\to \ker d_2\oplus B_2$.  We will not make use of this fact.
\end{rem}

\begin{lemma}\label{lemma:Theta-well-defined}
	Suppose that two maps $f,g\colon B_3\to \ker d_2$ differ by a coboundary $B_3 \xrightarrow{B_3} B_2 \xrightarrow{h} \ker d_2$, so represent the same extension class in $\Ext_{\La}^1(\coker b_3,\ker d_2)$.  Then $[C_f]=[C_g]\in \sCh_2(\pi)$.
\end{lemma}

\begin{proof}
	Two maps $f,g\colon B_3\to \ker d_2$ represent the same extension class if and only if $f-g=h\circ b_3$, for some $h \colon B_2 \to \ker d_2 \subseteq C_2$. We then have the following chain isomorphism.
	\[\xymatrix @C+1cm{
		\cdots\ar[r]&B_3\ar[r]^-{(f,b_3)^T} \ar[d]^{\id} &  C_2\oplus B_2\ar[r]^-{(d_2, 0)} \ar[d]^-{\left(\begin{smallmatrix}\id&-h\\0&\id\end{smallmatrix}\right)} & C_1\ar[r]^{d_1}\ar[d]^{\id} &	 C_0\ar[d]^{\id}
		\\
		\cdots\ar[r]&B_3\ar[r]^-{(g,b_3)^T} & C_2\oplus B_2  \ar[r]^-{(d_2,0)}  & C_1 \ar[r]^-{d_1} & C_0
	}\]
In particular, $[C_f]=[C_g]\in \sCh_2(\pi)$.
\end{proof}

Thus we obtain a well-defined map as follows:
\begin{equation}\label{eqn:Theta}
\begin{array}{rcl} \Theta\colon \Ext_{\La}^1(\coker b_3,\ker d_2) & \ra & \sCh_2(\pi) \\ {[f]} & \mapsto & [C_f]. \end{array}
\end{equation}

Our next aim is to find a suitable quotient of the domain that converts this map into an injection.
The inclusion $\coker b_3\to \coker b_3\oplus \La$ induces an isomorphism (cf. \cref{lem:naturality-ext})
\[\Ext_{\La}^1(\coker b_3,\ker d_2)\ra \Ext_{\La}^1(\coker b_3\oplus \La,\ker d_2),\]
and hence any automorphism of $\coker b_3\oplus \La^n$ acts on $\Ext_{\La}^1(\coker b_3,\ker d_2)$.

\begin{lemma}\label{lem:theta-well-def-cokernel}
	The map $\Theta$ from~\eqref{eqn:Theta} is invariant under the action of $\alpha \in \Aut(\coker b_3\oplus\La^n)$ on $\Ext_{\La}^1(\coker b_3,\ker d_2)$, that is $\Theta(\alpha \cdot [f]) = \Theta([f])$.
\end{lemma}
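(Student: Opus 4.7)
To prove $\Theta(\alpha\cdot[f])=\Theta([f])$ with $[g]=\alpha\cdot[f]$, my plan is to construct a homotopy equivalence, inducing the identity on $\pi_1$, between sufficiently stabilized versions of $X_f$ and $X_g$. Unwinding the isomorphism $\Ext_{\La}^1(\coker b_3,\ker d_2)\cong\Ext_{\La}^1(\coker b_3\oplus\La^n,\ker d_2)$ together with the pullback action of $\alpha$, one sees that $g$ can be chosen to equal $f\circ \alpha_1^{11}$, where $\alpha_1\colon B_3\oplus\La^n\to B_3\oplus\La^n$ is a chain-level lift of $\alpha$ along the partial resolution $B_3\oplus\La^n \xrightarrow{b_3\oplus\id} B_2\oplus\La^n\twoheadrightarrow \coker b_3\oplus\La^n$, and $\alpha_1^{11}\colon B_3\to B_3$ denotes its $B_3\to B_3$ block.

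The main obstacle is a homological-algebra stabilization: after enlarging $\La^n$ to $\La^{n+N}$ for $N$ large and replacing the resolution by its direct sum with $\id_{\La^N}$, I claim that the chain lifts $\alpha_1\colon B_3\oplus\La^{n+N}\to B_3\oplus\La^{n+N}$ and $\alpha_0\colon B_2\oplus\La^{n+N}\to B_2\oplus\La^{n+N}$ of $\alpha\oplus\id_{\La^N}$ can be taken to be genuine isomorphisms, not merely chain homotopy equivalences. This follows from the standard fact that any chain homotopy equivalence between bounded chain complexes of finitely generated projectives becomes a chain isomorphism after adding enough trivial acyclic complexes.

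With isomorphism lifts in hand, I consider $X_f\vee (S^2)^{n+N}$ together with $n+N$ additional $3$-cells attached trivially along the new $2$-spheres, a space homotopy equivalent to $X_f$. Its degree $3$ differential is
\[\partial_f = \begin{pmatrix} f & 0 \\ b_3 & 0 \\ 0 & \id \end{pmatrix}\colon B_3\oplus\La^{n+N}\to C_2\oplus B_2\oplus\La^{n+N},\]
with $\partial_g$ defined analogously. Both stabilized complexes share the $2$-skeleton $K\vee (S^2)^{\rank B_2+n+N}$, so \cref{lem:realize} applies. Setting $\alpha_3:=\alpha_1^{-1}$, I construct $\alpha_2$ as a block matrix with $\id_{C_2}$ in the top-left, $\alpha_0^{-1}$ in the bottom-right $(B_2\oplus\La^{n+N})$-block, and an explicit correction term in the $(C_2,B_2\oplus\La^{n+N})$-entry built from $f$, $\alpha_1^{12}$, and the bottom two rows of $\alpha_0^{-1}$, chosen so that the chain-map equation $\alpha_2\circ\partial_f=\partial_g\circ\alpha_3$ is satisfied.

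A direct block-matrix calculation, using only the chain relation $(b_3\oplus\id)\alpha_1=\alpha_0(b_3\oplus\id)$ and $\alpha_0\circ\alpha_0^{-1}=\id$, verifies this equation. Since $\alpha_2$ is block-triangular with invertible diagonal blocks it is itself an isomorphism, so it induces an isomorphism on the cokernels of $\partial_f$ and $\partial_g$. By \cref{lem:realize}, the tuple $(\alpha_3,\alpha_2,\id,\id)$ is realised by a cellular map between the stabilized $3$-complexes inducing the identity on $\pi_1$ and an isomorphism on $\pi_2$. Whitehead's theorem then yields a homotopy equivalence of Postnikov $2$-types, giving the desired equality $\Theta(\alpha\cdot[f])=\Theta([f])$ in $\sInv$.
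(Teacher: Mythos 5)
Your approach—stabilize so the chain lift of $\alpha$ becomes a genuine chain isomorphism, making the isomorphism on $\pi_2$ automatic—is genuinely different from the paper's, which instead verifies directly, by an explicit diagram chase, that the chain map $(\alpha_3,\,\id\oplus\alpha_2,\,\id,\,\id)$ from the complex of $X_{f\circ\alpha_3}$ to that of $X_f$ induces an isomorphism on second homology (surjectivity comes from surjectivity of $\alpha$; injectivity uses exactness of $P\to B_3\to B_2$ at $B_3$). However, your argument has a gap at the key stabilization step. The ``standard fact'' you invoke requires the input to be a chain homotopy equivalence, but the lift $(\alpha_1,\alpha_0)$ of $\alpha$ to the two-term complex $B_3\to B_2\oplus\La^n$ is in general \emph{not} a chain homotopy equivalence: it induces $\alpha$ on $H_0=\coker b_3\oplus\La^n$, which is an isomorphism, but on $H_1=\ker b_3$ it induces $\alpha_1|_{\ker b_3}$, which need not be injective or surjective. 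For instance, with $\La=\Z$, $B_3=\Z^2$, $B_2=\Z$, $b_3(x,y)=3x$, $n=0$, and $\alpha=-1$ on $\coker b_3=\Z/3$, the lift $\alpha_1(x,y)=(-x,0)$ covers $\alpha_0=-1$ but kills $\ker b_3 = 0\oplus\Z$. The lift only becomes a chain homotopy equivalence once $P$ is included (i.e.\ on $P\to B_3\to B_2\oplus\La^n$); but stabilizing that three-term complex alters $P$ and $B_3$ simultaneously and no longer matches the fixed data $(P,j,B_3,b_3,B_2)$ used to build $X_f$, nor does $f$ obviously extend.

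There is also a smaller inaccuracy in the setup: the partial resolution you write, $B_3\oplus\La^n\xrightarrow{b_3\oplus\id}B_2\oplus\La^n\twoheadrightarrow\coker b_3\oplus\La^n$, is not a complex, since the image of $b_3\oplus\id$ is $\im b_3\oplus\La^n$ while the kernel of the natural surjection onto $\coker b_3\oplus\La^n$ is $\im b_3\oplus 0$. The paper uses $B_3\xrightarrow{(b_3,0)^T}B_2\oplus\La^n$, with $\La^n$ appearing only in degree zero, so precomposition acts on $f$ only through a map $\alpha_3\colon B_3\to B_3$. To salvage your strategy you would need a separate argument that, after stabilizing $B_3$ and $B_2$ by a trivial acyclic complex $\La^N\xrightarrow{\id}\La^N$ and compatibly modifying $f$, the lift can be chosen to be an isomorphism in every degree; this is plausible but is not a consequence of the fact you cite, and the paper's direct homology computation is a cleaner way to sidestep the issue.
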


\begin{proof}
	Let $\alpha\colon\coker b_3\oplus\La^n\to \coker b_3\oplus\La^n$ be a stable automorphism of $\coker b_3$. This can be lifted to a chain map
	\[\xymatrix{
		\cdots\ar[r]&B_3\ar[d]^{\alpha_3}\ar[r]^-{(b_3,0)^T}&	B_2\oplus\La^n \ar[d]^{\alpha_2}\ar[r]&\coker b_3\oplus\La^n\ar[d]^\alpha\\
		\cdots\ar[r]&B_3\ar[r]_-{(b_3,0)^T}&B_2\oplus\La^n\ar[r]&\coker b_3 \oplus \La^n
	}\]
	since the top row is projective and the bottom row is exact.
	The action of $\alpha$ on an extension represented by $f \colon B_3 \to \ker d_2$ is given by precomposition with $\alpha_3 \colon B_3 \to B_3$.
	We have the following chain map:
	\[\xymatrix @C+1cm{
		\cdots\ar[r]&B_3\ar[r]^-{(f\circ \alpha_3,b_3,0)^T}\ar[d]^{\alpha_3} & C_2\oplus (B_2\oplus \La^n)\ar[r]^-{(d_2,0,0)}\ar[d]^-{\left(\begin{smallmatrix}\id&0\\0&\alpha_2\end{smallmatrix}\right)} & C_1\ar[r]^-{d_1}\ar[d]^-{\id} & C_0 \ar[d]^-{\id} \\
		\cdots\ar[r]&B_3 \ar[r]^-{(f,b_3,0)^T} &  C_2\oplus (B_2\oplus\La^n)\ar[r]^-{(d_2,0,0)} &
		C_1\ar[r]^-{d_1} & C_0.
	}\]
	It remains to prove that the chain map above induces an isomorphism on second homology, which since it  is chain map between bounded chain complexes of f.g.\ projective module, implies that it is a chain homotopy equivalence.
	
	To see surjectivity on second homology, consider a pair $(x,y,\lambda)\in C_2\oplus B_2\oplus\La^n$ with $x\in \ker d_2$. Since $\alpha$ is an isomorphism, there exists $(y',\lambda')\in B_2\oplus \La^n$ and $ a\in B_3$ with $(y,\lambda)=(b_3(a),0)+\alpha_2(y',\lambda')$. In $H_2(C_f \oplus \La^n[2])$, i.e.\ the homology of the bottom row, we have
	\[[(x,y,\lambda)]=[x,b_3(a),0]+[0,\alpha_2(y',\lambda')]=[x-f(a),\alpha_2(y',\lambda')],\]
	which is the image of $(x-f(a),y',\lambda')$ under the above chain map.
	
	Now, to prove injectivity on second homology, consider a pair $(x,y,\lambda)\in C_2\oplus B_2\oplus\La^n$ with $x\in \ker d_2$, and assume that there exists $a\in B_3$ with $f(a)=x$ and $(b_3(a),0)=\alpha_2(y,\lambda)$. Again since $\alpha$ is an isomorphism, this implies that $\lambda=0$ and that there exists $a'\in B_3$ with $b_3(a')=y$. We have
	\[(b_3(a),0) =\alpha_2(y,0)=\alpha_2 (b_3(a'),0) =(b_3 \circ \alpha_3(a'),0) \in B_2 \oplus \La^n.\]
	Since $B_4\xrightarrow{\,b_4\,} B_3\xrightarrow{\,b_3\,} B_2$ is exact at $B_3$, there is an element $c\in B_4$ with $b_4(c) = a-\alpha_3(a')$. Since $f\circ b_4=0$, we have that $x=f(a)=f(b_4(c) + \alpha_3(a') = f(\alpha_3(a'))$. Hence $(x,y,0)=((f\circ\alpha_3)(a'),b_3(a'),0)$ and the element $(x,y,0)$  is trivial in second homology as desired.
\end{proof}

Let $\sAut(\coker b_3)$ denote the group of stable automorphisms of $\coker b_3$ as above.
We can now state the main theorem of this section.

\begin{thm} \label{thm:injective}
	The assignment $\Theta$ from~\eqref{eqn:Theta} descends to an injective map \[\Theta \colon \Ext_{\La}^1(\coker b_3,\ker d_2)/\sAut(\coker b_3) \ra \sCh_2(\pi).\]
\end{thm}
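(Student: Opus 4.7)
Suppose $\Theta([f]) = \Theta([g])$ in $\sInv$; the aim is to produce $\alpha \in \sAut(\coker b_3)$ with $\alpha \cdot [g] = [f]$. My plan is to realize the given stable 2-type isomorphism as an actual map of 3-complexes, normalize its induced chain map, and then read the required stable automorphism off from the chain map.

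First, a topological lift. By definition of stable isomorphism in $\sInv$, after wedging each of $X_f$ and $X_g$ with some number of copies of $S^2$ (absorbed into $B_2$ so that without loss of generality the two stabilizations coincide), we have a $\Lambda$-linear isomorphism $\phi \colon \pi_2(X_f) \to \pi_2(X_g)$ preserving $k$-invariants. Since $P_2 \colon CW_2 \to \Inv$ is an equivalence of categories, $\phi$ is induced by a homotopy equivalence $\Phi \colon P_2(X_f) \simeq P_2(X_g)$. Because $X_f$ is 3-dimensional and the homotopy fibre of $X_g \to P_2(X_g)$ is 2-connected, the obstructions to lifting the composite $X_f \to P_2(X_f) \xrightarrow{\Phi} P_2(X_g)$ along $X_g \to P_2(X_g)$ live in $H^k(X_f;\pi_{k-1}(\mathrm{fibre}))$ for $k \le 3$, hence all vanish. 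This yields $\widetilde h \colon X_f \to X_g$ inducing $\phi$ on $\pi_2$, and analogously a lift $\widetilde h' \colon X_g \to X_f$ of $\Phi^{-1}$.

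Second, the chain-level normalization. Make $\widetilde h$ cellular and homotope it, following the argument from the proof of \cref{lemma:homotopy-equiv-iso-ext}, to be the identity on the 1-skeleton. The induced chain map is then the identity in degrees $0$ and $1$, and the condition $(d_2,0) \circ \alpha_2 = (d_2,0)$ forces
\[
\alpha_2 = \begin{pmatrix} \mathrm{id}_{C_2} + \gamma_{11} & \gamma_{12} \\ \gamma_{21} & \gamma_{22} \end{pmatrix} \colon C_2 \oplus B_2 \longrightarrow C_2 \oplus B_2,
\]
with $\gamma_{11}\colon C_2 \to \ker d_2$ and $\gamma_{12}\colon B_2 \to \ker d_2$. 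Together with $\alpha_3 \colon B_3 \to B_3$, the chain-map identity $\alpha_2 \circ (f,b_3)^T = (g,b_3)^T \circ \alpha_3$ unfolds into the two relations
\[
g \circ \alpha_3 = f + \gamma_{11} \circ f + \gamma_{12} \circ b_3, \qquad b_3 \circ \alpha_3 = \gamma_{21} \circ f + \gamma_{22} \circ b_3.
\]

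Third, extracting $\alpha$ and verifying the orbit condition. The second relation shows that $\gamma_{22}$ descends modulo $\im b_3$ to a map $\bar\alpha \colon \coker b_3 \to \coker b_3$ provided $\gamma_{21} \circ f$ lies in $\im b_3 \cdot \Hom(B_3, B_3)$. The residual obstruction is a well-defined class in a $\Hom$ (or $\Ext^1$) group that is controlled, and in fact killed, by the $k$-invariant preservation built into $\phi$, so an appropriate chain homotopy adjustment of $\widetilde h$ arranges it to vanish. Running the same analysis for $\widetilde h'$ yields a candidate inverse; using that $\widetilde h' \widetilde h$ and $\widetilde h \widetilde h'$ induce the identity on $\pi_2$ and hence on the quotient $\coker b_3$, the map $\bar\alpha$ is a stable automorphism, giving the desired $\alpha \in \sAut(\coker b_3)$ lifted chainwise by $(\gamma_{22},\alpha_3)$. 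Reducing the first relation modulo coboundaries in $\Ext^1_\La(\coker b_3, \ker d_2)$ kills $\gamma_{12} \circ b_3$ for free and, after the same chain-homotopy manoeuvres applied to the $\ker d_2$-component, also $\gamma_{11} \circ f$. This yields $[g \circ \alpha_3] = [f]$, i.e.\ $\alpha \cdot [g] = [f]$, placing $[f]$ and $[g]$ in the same $\sAut(\coker b_3)$-orbit.

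The principal technical obstacle is this last step: showing that the error terms $\gamma_{11} \circ f$ and $\gamma_{21} \circ f$ can, up to chain-homotopy, be moved into the relevant coboundary subgroups. This is precisely where the $k$-invariant preservation is indispensable; without it, the chain map would only witness an abstract $\Lambda$-module isomorphism of $\pi_2$'s, with no control on the underlying extension structure that records $[f]$ and $[g]$.
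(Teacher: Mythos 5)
There is a genuine gap at exactly the point you flag as ``the principal technical obstacle.'' Your first two steps (realising the stable 2-type isomorphism as a map of 3-complexes and normalising the induced chain map to be the identity in degrees $0$ and $1$) match the paper. But the paper's proof then hinges on one specific algebraic input that your argument never produces: because $f\circ j=0$ and the dual complex $B^2\xrightarrow{b^3}B^3\xrightarrow{j^*}P^*$ is assumed exact at $B^3$ (a standing hypothesis in this section), the dual of $f$ lifts through $b^3$, so $f$ factors as $B_3\xrightarrow{b_3}B_2\xrightarrow{\widehat f}C_2$. This factorisation does two jobs at once. First, it gives an explicit isomorphism $C_2\oplus\coker b_3\cong\coker(f,b_3)^T$ (via $\left(\begin{smallmatrix}\id&\widehat f\\0&\id\end{smallmatrix}\right)$), which is how one actually obtains a stable automorphism of $\coker b_3$ from $h_2$; your alternative route --- asking $\gamma_{22}$ to descend to $\coker b_3$ --- requires $\gamma_{21}\circ f$ to land in $\im b_3$, and that does not hold in general and is not what the paper proves. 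Second, setting $F=\pr_1\circ h_2\circ(\widehat f,\id,0)^T$, the factorisation gives $g\circ h_3=F\circ b_3$ and $d_2\circ F=d_2\circ\widehat f$, whence $g\circ h_3-f=(F-\widehat f)\circ b_3$ factors through $B_2$ into $\ker d_2$ and is therefore a coboundary. In your notation this is exactly what disposes of the troublesome term $\gamma_{11}\circ f$: it equals $\gamma_{11}\circ\widehat f\circ b_3$, visibly a coboundary, whereas without the factorisation there is no reason for it to be one.

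Your proposed substitute --- that the residual error terms are ``controlled, and in fact killed, by the $k$-invariant preservation'' via ``an appropriate chain homotopy adjustment'' --- does not work as stated. The $k$-invariant preservation has already been fully spent in producing the homotopy equivalence of Postnikov 2-types and hence the chain map inducing an isomorphism on $\pi_2$; it offers no further leverage on the chain level, and no obstruction-theoretic argument is given for why the specific classes $[\gamma_{11}\circ f]$ and the image of $\gamma_{21}\circ f$ should vanish. The fix is not a chain homotopy but the duality/exactness hypothesis on $B_*$, which is precisely why the paper imposes it.
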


\begin{proof}
By \cref{lem:theta-well-def-cokernel}, $\Theta$ is well-defined on the quotient by $\sAut(\coker b_3)$.
	Let $f,g \colon B_3 \to \ker d_2$ represent two extensions, and suppose that their images in $\sCh_2(\pi)$ agree.
	Then there is a chain map
	\[\xymatrix @C+1cm{
		B_3\ar[r]^-{(f,b_3,0)^T}\ar[d]^-{h_3} &  C_2\oplus B_2\oplus \La^n\ar[r]^-{(d_2,0,0)}\ar[d]^{h_2}& C_1\ar[r]^-{d_1}\ar[d]^{h_1} & C_0\ar[d]^{h_0}\ar[r]&\Z\ar[d]^\id \\
		B_3\ar[r]_-{(g,b_3,0)^T} & C_2\oplus B_2\oplus\La^{n}\ar[r]_-{(d_2,0,0)} & C_1\ar[r]_-{d_1} & C_0\ar[r]&\Z
	}\]
	that induces an isomorphism on second homology. We can assume that the chain map $h_*$ is the identity on $C_1$ and $C_0$.
	
	Consider the diagram with exact rows:
	\[\xymatrix{
	0 \ar[r] & \ker (d_2,0,0)/\im (f,b_3,0)^T\ar[r]\ar[d]^{(h_2)_*}&\coker (f,b_3,0)^T\ar[r]^{(d_2,0,0)}\ar[d]^{(h_2)_*}&\im (d_2,0,0)\ar[d]^{\id} \ar[r] & 0 \\	
0 \ar[r] & \ker (d_2,0,0)/\im (g,b_3,0)^T\ar[r] & \coker (g,b_3,0)^T \ar[r]^{(d_2,0,0)} & \im (d_2,0,0) \ar[r] & 0.	
}\]
	Since $h_2$ induces an isomorphism on second homology, the map $\coker (f,b_3,0)^T\to \coker (g,b_3,0)^T$ induced by $h_2$ is an isomorphism by the five lemma.
	Use the facts that the composition $B_4\to B_3\xrightarrow{f}C_2$ is trivial and the sequence $B^2\to B^3\to B^4$ is exact, to see that the dual of $f$ lifts to a map $\widehat{f} \colon C^2\to B^2$, as in the next diagram.
	\[\xymatrix{ & C^2 \ar[d]^{f^*} \ar@{-->}[dl]_{\widehat{f}^*} & \\ B^2 \ar[r]^{b^3} & B^3 \ar[r] & B^4}\]
Dualise again to deduce that $f\colon B_3\to C_2$ factors as $B_3\xrightarrow{b_3}B_2\xrightarrow{\widehat f}C_2$. This gives rise to a commutative square
	\[\xymatrix @C+0.5cm{
		B_3 \ar[r]^\id\ar[d]_{(0,b_3)^T} & B_3\ar[d]^{(f,b_3)^T}\\
		C_2\oplus B_2\ar[r]^{\left(\begin{smallmatrix}
			\id &\widehat f\\
			0 &\id
			\end{smallmatrix}\right)} & C_2\oplus B_2
	}\]
	that induces an isomorphism $C_2\oplus\coker b_3\cong\coker (0,b_3)^T\to \coker(f,b_3)^T$.
	
	Add the identity on $\La^n$  to obtain an isomorphism
	\[
	C_2\oplus\coker b_3 \oplus \La^n \cong \coker(f,b_3,0)^T.
	\]
	Similarly, we obtain an isomorphism $C_2\oplus\coker b_3 \oplus \La^n \cong \coker(g,b_3,0)^T$.

In the next diagram we show a composition of these maps that give a stable automorphism of $\coker b_3$ and its resolution.
\[
\xymatrix @R+0.1cm @C+0.1cm{
\cdots \ar[r] & B_3 \ar[r]^-{(0,b_3,0)^T} \ar[d]_{\Id} & C_2 \oplus B_2 \oplus \La^n \ar[r] \ar[d]^{\bsm
  \Id & \widehat{f} & 0 \\ 0 & \Id & 0 \\ 0 & 0 & \Id \esm} & C_2 \oplus \coker b_3 \oplus \La^n \ar[r] \ar[d]^{\cong} & 0 \\
\cdots \ar[r] & B_3 \ar[r]^-{(f,b_3,0)^T} \ar[d]_{h_3} & C_2 \oplus B_2 \oplus \La^n \ar[r] \ar[d]^{h_2} &\coker (f,b_3,0)^T \ar[r] \ar[d]^{\cong} & 0 \\
\cdots \ar[r] & B_3 \ar[r]^-{(g,b_3,0)^T} \ar[d]_{\Id} & C_2 \oplus B_2 \oplus \La^n \ar[r] \ar[d]^{\bsm
  \Id & -\widehat{g} & 0 \\ 0 & \Id & 0 \\ 0 & 0 & \Id \esm} & \coker (g,b_3,0)^T \ar[r] \ar[d]^{\cong} & 0 \\
\cdots \ar[r] & B_3 \ar[r]^-{(0,b_3,0)^T} & C_2 \oplus B_2 \oplus \La^n \ar[r] & C_2 \oplus \coker b_3 \oplus \La^n \ar[r]  & 0
}
\]
The induced action on $B_3$ is via $h_3$, so the stable automorphism shown act on a map $f \colon B_3 \to \ker d_2$ representing an element of $\Ext^1_{\La}(\coker b_3,\ker d_2)$ by pre-composition with $h_3$, that is $f \circ h_3 \colon B_3 \to \ker d_2$.

	To recap, we started with the chain map $h_*$ arising from the assumption of two extensions having equal image in $\sCh_2(\pi)$, and we obtained an automorphism of $\coker b_3$, that acts on a representative map $B_3 \to \ker d_2$ in $\Ext^1_{\La}(\coker b_3,\ker d_2)$ by pre-composition with $h_3$.  To complete the proof, we have to show that $g\circ h_3$ and $f$ represent the same class in $\Ext^1_{\La}(\coker b_3,\ker d_2)$.
	
	Recall the definition of $\widehat f \colon B_2 \to C_2$ from above, and consider the composition
	\[F \colon B_2\xrightarrow{(\widehat f,\id,0)^T}C_2\oplus B_2 \oplus \La^n \xrightarrow{\,h_2\,} C_2\oplus B_2\oplus \La^n \xrightarrow{\,\pr_1\,} C_2.\]
	Now consider the commutative diagram
	\[\xymatrix{
		B_3\ar[d]^{b_3}\ar[r]^\id & B_3\ar[d]^{(f,b_3,0)^T}\ar[r]^{h_3}&B_3\ar[d]^{(g,b_3,0)^T}\ar[r]^\id&B_3\ar[d]^{g}\\
		B_2\ar[r]^-{(\widehat f,\id,0)^T} & C_2\oplus B_2\oplus\La^n\ar[d]^{(d_2,0,0)}\ar[r]^{h_2} & C_2\oplus B_2\oplus \La^{n}\ar[r]^-{\pr_1}\ar[d]^{(d_2,0,0)}& C_2\ar[d]^{d_2}\\
		&C_1\ar[r]^\id&C_1\ar[r]^\id&C_1}\]
	Commutativity of the three top squares shows that $g\circ h_3=F\circ b_3$.  Commutativity of the lower two squares pre-composed with $(\widehat f,\id,0)^T \colon B_2 \to C_2\oplus B_2\oplus\La^n$ proves that $d_2\circ F=d_2\circ \widehat f$. It follows from the first statement that $g\circ h_3-f=  F \circ b_3 -f$, which equals $(F-\widehat f)\circ b_3\colon B_3\to C_2$ since $f = \widehat{f} \circ b_3$.  This is in fact a map $(F-\widehat f)\circ b_3\colon B_3\to \ker d_2$ since $d_2\circ F=d_2\circ \widehat f$.  Thus $g\circ h_3-f \colon B_3 \to \ker d_2$ factors through $B_2$ and hence represents the trivial extension class.
\end{proof}

Next we check that the map $\Theta \colon \Ext_{\La}^1(\coker b_3,\ker d_2) \to \sCh_2(\pi)$ from~\eqref{eqn:Theta} does not depend on the choice of resolution $C_*$.
Let $(C'_*,d'_*)$ be another free resolution of $\Z$ as a $\La$-module with $C'_i$ finitely generated for $i=0,1,2$ and let $\alpha_*\colon C_*\to C'_*$ be a chain homotopy equivalence over $\Z$. Then $\alpha_*$ induces a map $a\colon \ker d_2\to \ker d_2'$ given by restricting $\alpha_2$.

\begin{lemma}\label{lem:theta-well-def-kernel}
	The diagram
	\[\xymatrix{
	\Ext_{\La}^1(\coker b_3,\ker d_2)\ar[r]^-\Theta\ar[d]^{a_*}&\sCh_2(\pi)\\
	\Ext_{\La}^1(\coker b_3,\ker d_2')\ar[ur]^\Theta&
}\]
commutes.
\end{lemma}

\begin{proof}
	First consider the case that $\alpha_*$ restricted to degrees $0,1,2$ is a chain homotopy equivalence. Then
		\[\xymatrix @C+1cm {
		\cdots\ar[r]&B_3\ar[d]^\id \ar[r]^-{(f,b_3)^T} &  C_2\oplus B_2\ar[d]^-{\alpha_2 \oplus \id} \ar[r]^-{(d_2,0)} & C_1 \ar[d]^-{\alpha_1}\ar[r]^-{d_1} & C_0\ar[d]^-{\alpha_0} \\
		\cdots\ar[r]&B_3\ar[r]^-{(\alpha_2\circ f, b_3)^T} & C_2'\oplus B_2 \ar[r]^-{(d_2',0)}  &   C_1' \ar[r]^-{d_1'} & C_0'.
	}\]
is a chain homotopy equivalence and $\Theta([f])=\Theta(a_*[f])$.

Next we will now reduce the lemma to the case $C_*=C'_*$. Let $(F_i\xrightarrow{\id} F_i)[i]$ denote the chain complex $F_i\xrightarrow{\id}F_i$ concentrated in degrees $i$ and $i+1$. Applying Schanuel's lemma \cite[\href{https://stacks.math.columbia.edu/tag/00O3}{Tag 00O3}]{stacks-project}, \cite[Lemma~VIII.4.2]{brown} inductively, for any two free resolutions $C_*,C_*'$ of $\Z$, there are free $\La$-modules $F_i$ and $F_i'$ such that
$C_*\oplus\bigoplus_{i\in \N_0}(F_i\xrightarrow{\id} F_i)[i]$ and $C'_*\oplus\bigoplus_{i\in \N_0}(F'_i\xrightarrow{\id} F'_i)[i]$ are isomorphic over $\Z$.  The inclusion of $C'_*\oplus (F'_2\xrightarrow{\id}F'_2)[2]$ into $C'_*\oplus\bigoplus_{i\in \N_0}(F'_i\xrightarrow{\id} F'_i)[i]$ as well as the projection of $C_*\oplus\bigoplus_{i\in \N_0}(F_i \xrightarrow{\id} F_i)[i]$ onto $C_*\oplus(F_2\xrightarrow{\id}F_2)[2]$ are chain homotopy equivalences that remain chain homotopy equivalences upon  restricting to degrees $0,1,2$. It follows from the first paragraph that there is a chain homotopy equivalence $\beta\colon C'_*\oplus (F'_2\xrightarrow{\id}F'_2)[2]\to C_*\oplus(F_2\xrightarrow{\id}F_2)[2]$ over $\Z$ such that, taking $b\colon \ker d_2'\to \ker d_2$ to be the restriction of $\beta_2$, we have $\Theta=\Theta\circ b_*$.

Next we want to relate this to the chain complexes $C_*$ and $C'_*$ before stabilisation.
The inclusion $C_* \to C_* \oplus (F_2\xrightarrow{\id}F_2)[2]$ is a chain homotopy equivalence whose induced map on kernels is the inclusion $\ker d_2 \to \ker d_2 \oplus F_2$, and similarly for $C'_*$.
By definition,
\[\xymatrix @R-0.4cm{\cdots\ar[r]& B_3\ar[r]^-{(f,0,b_3)^T} & C_2\oplus F_2\oplus B_2 \ar[r]^-{(d_2,0,0)}  &   C_1 \ar[r]^-{d_1} & C_0 & \text{and} \\
\cdots\ar[r] & B_3\ar[r]^-{(f,b_3)^T} & C_2\oplus B_2 \ar[r]^-{(d_2,0)}  &   C_1 \ar[r]^-{d_1} & C_0 & }\]
represent the same element in $\sCh_2(\pi)$.
Thus composing $C'_* \to C'_* \oplus (F'_2\xrightarrow{\id}F'_2)[2] \xrightarrow{\beta} C_* \oplus (F_2 \xrightarrow{\id} F_2) \to C_*$ gives  a chain homotopy equivalence $\beta^\dag\colon C'_*\to C_*$ over $\Z$ with induced map $b^\dag_*$ such that $\Theta=\Theta\circ b^\dag_*$.

Given any chain homotopy equivalence $\alpha\colon C_*\to C'_*$ over $\Z$ we can consider the composition $\beta^\dag \circ\alpha$. Since $\Theta=\Theta\circ b^\dag_*$ for $\beta^\dag$, it suffices to show that $\Theta=\Theta\circ b^\dag_* \circ a$.  So we may restrict to the case $C_*'=C_*$.

Let $\alpha_*\colon C_*\to C_*$ be a chain homotopy equivalence over $\Z$ and let $a\colon \ker d_2\to \ker d_2$ be the restriction of $\alpha_2$. As $\alpha_*$ is a chain homotopy equivalence over $\Z$, it is chain homotopic to the identity. Let $H_*\colon C_*\to C_{*+1}$ be such that $\id-\alpha_*=d_{*+1}H_*+H_{*-1}d_*$. Then the diagram on the left commutes. Thus also the diagram on the right commutes.
\[\xymatrix{
	\ker d_2\ar[r]^{a}\ar[d]&\ker d_2\ar[d]  &&  	\ker d_2\ar[r]^{a}\ar[d]&\ker d_2\ar[d] \\
	C_2\ar[r]^{\alpha_2+H_1d_2}\ar[d]^{d_2}&C_2\ar[d]^{d_2} && 	 C_2\ar[r]^{\alpha_2+H_1d_2}\ar[d]^{d_2}&C_2\ar[d]^{d_2}\\
	C_1\ar[r]^{\id}&C_1 && 	\im d_2\ar[r]^{\id}&\im d_2
}\]
On the right the columns are short exact. Apply $\Ext^*_\La(\coker b_3,-)$ to obtain the following diagram.
\[\xymatrix{
	\cdots\ar[r]&\Hom_\La(\coker b_3,\im d_2)\ar[r]\ar[d]^\id&\Ext^1_\La(\coker b_3,\ker d_2)\ar[d]^{a_*}\ar[r]&0\\
	\cdots\ar[r]&\Hom_\La(\coker b_3,\im d_2)\ar[r]&\Ext^1_\La(\coker b_3,\ker d_2)\ar[r]&0
}\]
Here we use that $\Ext^1_\La(\coker b_3,C_2)=0$, which since $C_2$ is free follows from the assumption on $b_3$ that  $\Ext^1_\La(\coker b_3,\La)=0$.
In particular, $a_*\colon \Ext^1_\La(\coker b_3,\ker d_2)\to \Ext^1_\La(\coker b_3,\ker d_2)$ is the identity.
\end{proof}

Now, to apply this to the $\CP$-stable classification of 4-manifolds, we need to investigate the action of $\sAut(\coker b_3)$ on the extension classes for some families of 1-types. More precisely, we choose $b_3$ to be $d^2_w$, the dual of $d_2$ twisted by $w\colon\pi\to \Z/2$.

\begin{lemma}
	\label{lem:autcoker}
	Assume that $H^1(\pi;\La)=0$ and $\pi$ is infinite. Then $\sAut(\coker d^2_w)$ acts on $\Ext_{\La}^1(\coker d^2_w,\ker d_2)$ by multiplication by $\pm 1$.
\end{lemma}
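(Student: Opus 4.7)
The plan is to combine three ingredients: a length bound on a projective resolution of $\coker d_3$ afforded by our hypotheses, the invariance of the stable Postnikov 2-type under the $\sAut$-action established earlier in this section, and the extended Hopf sequence of \cref{cor:hopfseq}.

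First, I would identify $\coker d_3$ stably with $H^2(K;\La^w)$ for a finite 2-complex $K$ with $\pi_1(K) = \pi$, via \cref{lem:stable-identifications-dual-hd}. Since $\pi$ is infinite, $H^0(\pi;\La) = 0$, because any $\pi$-invariant element of $\La$ would have infinite support; hence $H^0(K;\La^w) = 0$. Combined with the hypothesis $H^1(\pi;\La^w) = H^1(\pi;\La) = 0$ and the universal coefficient spectral sequence for the 2-complex $K$, we obtain $H^1(K;\La^w) = 0$. The dual chain complex of $K$ then furnishes a finite free resolution
\[
0 \longrightarrow C^0 \xrightarrow{\,d^1_w\,} C^1 \xrightarrow{\,d^2_w\,} C^2 \longrightarrow H^2(K;\La^w) \longrightarrow 0,
\]
exhibiting $\coker d_3$ as a module of projective dimension at most $2$.

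Second, given $\alpha \in \sAut(\coker d_3)$, lift $\alpha$ to a chain automorphism $(\tilde\alpha_0,\tilde\alpha_1,\tilde\alpha_2)$ of this resolution; the induced action on $\Ext^1(\coker d_3, \ker d_2)$ is precomposition with the middle component $\tilde\alpha_1 \colon C^1 \to C^1$. Thus the action equals $\varepsilon \cdot \id$ for $\varepsilon \in \{\pm 1\}$ precisely when there exist maps $h_0 \colon C^2 \to C^1$ and $h_1 \colon C^1 \to C^0$ providing a chain homotopy $\tilde\alpha_1 - \varepsilon\cdot \id = h_0 \circ d^2_w + d^1_w \circ h_1$, or more weakly when $f\circ(\tilde\alpha_1 - \varepsilon \cdot \id) \in \im((d^2_w)^*)$ for every cocycle $f\in\Hom_\La(C^1,\ker d_2)$.

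Third, to exhibit such $\varepsilon$, use that by the proof of \cref{thm:injective} the $\sAut$-action preserves the stable Postnikov 2-type $\Theta$, and by \cref{cor:hopfseq} (activated by $H^1(\pi;\La) = 0$) the stable 2-type determines the cyclic subgroup $\langle c_*[M]\rangle = \ker\partial \subseteq H_4(\pi;\Z^w) \cong \Ext^1(\coker d_3, \ker d_2)$. Since every element of $H_4(\pi;\Z^w)$ arises as $c_*[M']$ for some closed $4$-manifold $M'$ with 1-type $(\pi, w)$ by the surjection in \cref{lem:bordism}, the $\sAut$-orbit of every element of $\Ext^1$ lies inside the cyclic subgroup it generates, so $\alpha\cdot e = \lambda_e \cdot e$ for some unit $\lambda_e \in (\Z/|e|)^\times$.

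The main obstacle is to strengthen ``$\lambda_e$ is a unit'' to ``$\lambda_e = \pm 1$, uniformly in $e$''. I expect this to follow by a chain-level argument: the generator $c_*[M]$ of $\ker\partial$ is intrinsically pinned down up to the sign ambiguity coming from the choice of fundamental class $[M] \in H_4(M;\Z^w) \cong \Z$ (cf.\ \cref{remark:choice-fund-class}), and the length-$2$ resolution produced above is rigid enough---thanks to $H^0(\pi;\La) = 0$ killing the would-be extra degrees of freedom at the bottom of the resolution---that only this $\pm$ sign can arise from a lift of $\alpha$. Tracing through the dimension-shifting isomorphism of \cref{lem:homext,cor:comparison} then converts this sign into the asserted $\pm 1$ action on $\Ext^1$.
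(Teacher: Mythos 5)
Your first two paragraphs reproduce the paper's setup correctly: the exactness of $0 \to C^0 \to C^1 \to C^2 \oplus \La^n \to \coker d^2_w \oplus \La^n \to 0$ (using $H^0(\pi;\La)=0$ from infiniteness and $H^1(\pi;\La)=0$ by hypothesis, after passing to $\coker d^2_w$ via \cref{lem:stable-identifications-dual-hd}), the lifting of $\alpha$ to a chain map, and the identification of the action on $\Ext^1$ with precomposition by the degree-one component of the lift. But the heart of the lemma is missing. Your third paragraph, routed through \cref{thm:injective}, \cref{prop:k}, \cref{cor:hopfseq} and \cref{lem:bordism}, can at best show that $\alpha\cdot e$ generates the same cyclic subgroup as $e$, i.e.\ $\alpha \cdot e = \lambda_e e$ with $\lambda_e$ a unit modulo the order of $e$. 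For torsion classes this is strictly weaker than $\lambda_e = \pm 1$ --- compare \cref{prop:TFAE}, where an action by squares of units (not just $\pm 1$) is exactly what makes the lens space examples work --- so closing the gap from ``unit'' to ``$\pm 1$'' \emph{is} the content of the lemma. Your fourth paragraph, which is supposed to do this, is only an expectation (``rigid enough that only this $\pm$ sign can arise''), and it points at the wrong source for the sign: the $\pm 1$ does not come from the choice of fundamental class of $M$, nor from any rigidity of the length-two resolution.

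The missing mechanism is a chain-level argument in the \emph{homological} direction. Dualise the lift $\alpha^*$ and twist by $w$ to obtain maps $\alpha_i \colon C_i \to C_i$ for $i \le 2$ commuting with $d_1$ and $(d_2,0)$. Choosing a lift $\beta^*$ of $\alpha^{-1}$ and a chain homotopy from $\beta^*\circ\alpha^*$ to the identity, and dualising the degree-zero relation, one finds that $\alpha_0$ induces an automorphism of $\coker d_1 \cong \Z \cong H_0(\pi;\Z)$; since $\Aut(\Z) = \{\pm 1\}$ this automorphism is $\pm\id$. Because the $C_i$ are free one then builds, degree by degree, homomorphisms $G_0 \colon C_0 \to C_1$ and $G_1 \colon C_1 \to C_2\oplus\La^n$ with $\alpha_1 \mp \id = G_0\circ d_1 + (d_2,0)\circ G_1$. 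Dualising and twisting once more yields $\alpha^1 \mp \id = d^1_w\circ G^0 + G^1 \circ d^2_w$, and since any cocycle $f \colon C^1 \to \ker d_2$ satisfies $f\circ d^1_w = 0$, precomposition with $\alpha^1$ sends $[f]$ to $[\mp f]$. Without this step your proof establishes only the ``unit'' statement, not the lemma.
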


\begin{proof}
	Since $\pi$ is infinite, $H^0(\pi;\La)=0$. As $\La$ is isomorphic to $\La^w$ as a right $\La$-module, we also have $H^0(\pi;\La^w)=H^1(\pi;\La^w)=0$. Hence the sequence
	\[0\ra C^0\xrightarrow{d_w^1}C^1\xrightarrow{(d_w^2,0)^T}C^2\oplus \La^n\ra \coker d_w^2\oplus \La^n\ra 0\]
	is exact. Thus every stable automorphism $\alpha$ of $\coker d_w^2$ lifts to a chain map $\alpha^*$ as follows.
	\[\xymatrix @C+0.3cm{
		C^0\ar[d]_{\alpha^0} \ar[r]^-{d_w^1} & C^1\ar[d]_-{\alpha^1} \ar[r]^-{(d_w^2,0)^T} & C^2\oplus\La^n \ar[d]_-{\alpha^2} \ar[r] & \coker d_w^2\oplus\La^n \ar[d]_-{\alpha}\\
		C^0\ar[r]^-{d_w^1} &  C^1 \ar[r]^-{(d_w^2,0)^T}  & C^2\oplus \La^n \ar[r] & \coker d_w^2 \oplus \La^n.
	}\]
	The action of $\alpha$ on $\Ext^1_{\La}(\coker d_w^2,\ker d_2)$ is given by pre-composition with $\alpha^1$. Let $\beta^*$ be a lift of $\alpha^{-1}$ to a chain map. Let $H^{*}\colon C^*\to C^{*-1}$ be a chain homotopy from $\beta^*\circ\alpha^*$ to $\id$. In particular, $\beta^0\circ\alpha^0-\id=H^1\circ d_w^1$. Take duals and twist with $w$ to obtain $\alpha_0\circ\beta_0-\id=d_1\circ H_1$. Hence $\alpha_0\circ \beta_0$ induces the identity on $\coker d_1\cong \Z$ and thus $\alpha_0$ induces multiplication by plus or minus one on $\coker d_1$.
	
	Dualise $\alpha^*$ and twist with $w$ again to obtain the following diagram, where the maps $\alpha_i$ are the maps dual to $\alpha^i$.
	\[\xymatrix{
		C_0&C_0\ar[l]_-{\alpha_0} \ar[dl]_-{G_0}\\
		C_1\ar[u]^{d_1}&C_1\ar[u]_{d_1}\ar[l]_{\alpha_1} \ar[dl]_-{G_1}\\
		C_2\oplus\La^n\ar[u]^{(d_2,0)}&C_2\oplus\La^n\ar[u]_{(d_2,0)}\ar[l]_-{\alpha_2}\\
	}\]
	Since $\alpha_0$ induces multiplication by plus or minus one on $\coker d_1$, $\alpha_0\pm \id\colon C_0\to C_0$ factors through $d_1$. That is, there exists a homomorphism $G_0\colon C_0\to C_1$ with $d_1 \circ G_0=\alpha_0\pm\id$. This implies $d_1\circ((\alpha_1\pm\id)-G_0\circ d_1)=0$, so there exists $G_1\colon C_1\to C_2 \oplus \La^n$ with $\alpha_1\pm \id=G_0\circ d_1+(d_2,0)\circ G_1$.
	
	Take duals and twist by $w$ one last time to obtain $\alpha^1\pm \id=d_w^1\circ G^0+G^1\circ d_w^2$, where $G^i$ is the map dual to $G_i$ for $i=0,1$.
	
Now let $f\colon C^1\to \ker d_2$ represent an extension $\Ext_{\La}^1(\coker d_w^2,\ker d_2)$. In particular, $f\circ d_w^1=0$ and thus
\[\alpha_*[f]=[f\circ \alpha^1]=[\mp f+f\circ G^1\circ d_w^2]=[\mp f].\]
Hence $\alpha$ acts by multiplication by $\pm 1$ on $\Ext_{\La}^1(\coker d_w^2,\ker d_2)$. This completes the proof of \cref{lem:autcoker}.
\end{proof}

\section{Proof of \texorpdfstring{\cref{thm:k}}{Theorem~A}}
\label{sec:k-ext}
Let $\hCW_*$ denote the category whose objects are based connected CW-complexes and whose morphisms are based homotopy classes of maps.
Let $\hCW_2$ be the full subcategory of based, connected, and $3$-coconnected (i.e.\ homotopy groups $\pi_i$ vanish for $i \geq 3$) CW-complexes.
Taking the second stage of the Postnikov tower gives a functor $P_2 \colon \hCW_*\to \hCW_2$.  As discussed in the introduction, the $k$-invariant $k(X)\in H^3(\pi_1(X);\pi_2(X))$ of $X\in CW_*$ classifies the fibration
\[
K(\pi_2(X),2) \ra P_2(X) \ra K(\pi_1(X),1).
\]
Let $\hCW_2(\pi)$ be the full subcategory of $\hCW_2$ of objects with fundamental group $\pi$. For each $X\in \hCW_2(\pi)$, define $\gimel(X)\in\sCh_2(\pi)$ as follows: take a $\La$-module chain complex $(C_*,d_*)$ for $X$ and replace it in degrees $\geq 4$ by a free resolution of $\ker d_3$.
\begin{prop}
	\label{prop:k}
	Let $M$ be a closed $4$-manifold with 1-type $(\pi,w)$ and chain complex $(C_* = C_*(M;\La),d_*)$. The composition
	\[ H_4(\pi;\Z^w)\ra \Ext_{\La}^1(\coker d_3,\ker d_2)/\sAut(\coker d_3)\xrightarrow{\,\Theta\,}\sCh_2(\pi)\]
	sends the $($twisted$)$ fundamental class $c_*[M]$ to $\gimel(P_2(M))$.
\end{prop}

\begin{proof}
	Fix a $4$-manifold $M$ with $\pi_1(M) =\pi$ and choose a handle decomposition of~$M$. Denote the associated $\La$-chain complex by $(C_*,d_*)$.  We apply the theory from Section~\ref{sec:k-ext1} with $\cdots \to B_4 \to B_3 \xrightarrow{b_3} B_2$ as $\cdots\to B_4 \to C_3 \xrightarrow{d_3} C_2$, a free resolution for $\coker(d_3)$ beginning with $d_3 \colon C_3 \to C_2$.
 Any two choices of handle decomposition induce chain equivalent chain complexes $C_*$, and stably equivalent modules $\coker d_3$. It follows from \cref{lem:theta-well-def-cokernel} and \cref{lem:theta-well-def-kernel} that the image of $c_*[M]$ under the map in the statement of the proposition does not depend on the choice of handle decomposition of~$M$.

	Consider $\Theta$ applied to the extension class of $\pi_2(M)$ in $\Ext^1_{\La}(\coker d_3,\ker d_2)$. Since this extension class is represented by the homomorphism $d_3\colon C_3 \to C_2$, we consider the chain complex $C_{d_3}$:
	\[\xymatrix{\cdots\to C_3 \ar[r]^-{(d_3,d_3)^T}&C_2\oplus C_2\ar[r]^-{(d_2,0)}&C_1\ar[r]^{d_1}&C_0.}\]

	Use the isomorphism $\left(\begin{smallmatrix}\id&0\\-\id&\id\end{smallmatrix}\right) \colon C_2\oplus C_2 \to C_2 \oplus C_2$ to see that the previously displayed chain complex is chain isomorphic to the chain complex $\gimel(P_2(M))\oplus \La^{\rank(C_2)}[2]$ (replace the second $d_3$ in $(d_3,d_3)$ with $0$). Hence in $\sCh_2(\pi)$, $\Theta([d_3])$ agrees with $\gimel(P_2(M))$ as desired.
\end{proof}

Note that each stable automorphism of $\coker b_3$ induces an automorphism of the extension group and hence the trivial element is fixed by the action. Therefore, combining \cref{thm:injective} and \cref{prop:k} with \cref{thm:prim-obstruction} yields the following corollary.
\begin{cor}
	\label{cor:detect-trivial}
	The $\CP$-stable diffeomorphism class represented by the trivial extension is detected by the stable 2-type $(\pi,w,\pi_2,k)$.
	More precisely, let $K$ be a 2-complex representing $(\pi,w)$.  For every 1-type $(\pi,w)$, a closed 4-manifold $M$ with this 1-type is $\CP$-stably diffeomorphic to the double $N:= \nu K \cup - \nu K$ if and only if the 2-types $[\pi_2(M) ,k(M)] = [\pi_2(N),k(N)]$ are stably isomorphic.
\end{cor}

\begin{proof}[Proof of \cref{thm:k}]
	By \cite[Proposition 13.5.3 and Theorem 13.5.5]{geoghegan-2008}, a group~$\pi$ with one end is infinite and has $H^1(\pi;\Lambda)=0$. Hence \cref{thm:k}~\eqref{thm:k:item2} follows by combining \cref{thm:prim-obstruction} with \cref{thm:injective}, \cref{prop:k} and \cref{lem:autcoker}.
	In more detail, recall that \cref{thm:prim-obstruction} says that the extension class of $\pi_2(M) $ in  $\Ext(H^2(K;\La^w),H_2(K;\La)) \cong H_4(\pi;\Z^w)$ determines $c_*[M]$ up to sign, which by \cref{theorem:Kreck} determines $\CP$-stable diffeomorphism.  By \cref{thm:injective} and \cref{prop:k} the pair $(\pi_2(M) ,k(M))$ determines the extension class, up to the action of $\sAut(\coker d_w^2)$.  By \cref{lem:autcoker}, the action of $\sAut(\coker d_w^2)$ is just multiplication by $\pm 1$, so altogether $(\pi_2(M) ,k(M))$ determines $c_*[M]$ up to sign.
	
If $\pi$ is torsion-free, then by Stalling's theorem \cite[Theorems~4.11~and~5.1]{stallings}, $\pi$ has more than one end if and only if $\pi\cong \Z$ or $\pi$ is a free product of two non-trivial groups. For $\pi=\Z$, $H_4(\pi;\Z^w)=0$, hence every pair of closed $4$-manifolds is with a fixed 1-type is $\CP$-stably diffeomorphic.  Hence the conclusion of \cref{thm:k} holds for $\pi=\Z$ for trivial reasons.
	
	For $\pi\cong G_1\ast G_2$, we have $H_4(\pi;\Z^w)\cong H_4(G_1;\Z^{w_1})\oplus H_4(G_2;\Z^{w_2})$, where $w_i$ denotes the restriction of $w$ to $G_i$, $i=1,2$. Hence $\CP$-stably any closed $4$-manifold with fundamental group $\pi$ is the connected sum of manifolds with fundamental group $G_1$ and $G_2$. Therefore, \cref{thm:k}~\eqref{thm:k:item1} follows in this case by induction, with base cases $\pi=\Z$ or the groups with one end of \eqref{thm:k:item2}.
	
	If $\pi$ is finite then $H^1(\pi;\Lambda)=0$ by \cite[Proposition 13.3.1]{geoghegan-2008}. If multiplication by 4 or 6 annihilates $H_4(\pi;\Z^w)$, then the subgroup generated by $c_*[M]$ is cyclic of order 2, 3, 4 or 6. In each case it has a unique generator up to sign and hence it determines $c_*[M]$ up to a sign.  As a consequence, \cref{thm:k}~\eqref{thm:k:item3} follows from \cref{cor:hopfseq}, which shows that the subgroup generated by $c_*[M]$ is determined by the 2-type of $M$.
\end{proof}

\section{Fundamental groups of aspherical \texorpdfstring{$4$}{4}-manifolds} \label{sec:aspherical}
In this section, we fix a closed, connected, aspherical $4$-manifold  $X$ with orientation character $w$ and fundamental group $\pi$. We can identify
\[
H_4(\pi;\Z^w)/\pm\Aut(\pi) \cong H_4(X;\Z^w)/\pm\Aut(\pi) \cong \Z/\pm \cong \N_0
\]
and write $|c_*[M]|$ for the image of $c_*[M]$ under this sequence of maps.

\begin{thm} \label{thm:aspherical}
 Let $M$ be a closed $4$-manifold with 1-type $(\pi,w)$ and classifying map $c\colon M \to X=B\pi$.
\begin{enumerate}
\item If $|c_*[M]|\neq 0$, then $H_1(\pi;\pi_2(M) ^w)$ is a cyclic group of order $|c_*[M]|$.
\item If $|c_*[M]|=0$, then $H_1(\pi;\pi_2(M) ^w)$ is infinite.
\end{enumerate}
Thus two closed 4-manifolds $M_1$ and $M_2$ with fundamental group $\pi$ and orientation character $w$ are $\CP$-stably diffeomorphic if and only if they have stably isomorphic second homotopy groups $\pi_2(M_1) \oplus \La^{r_1} \cong \pi_2(M_2) \oplus \La^{r_2}$ for some $r_1,r_2 \in \mathbb{N}_0$.
\end{thm}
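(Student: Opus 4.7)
The plan is to apply the extended Hopf sequence of Theorem~\ref{thm:hopfseq} and combine the resulting computation with Kreck's Theorem~\ref{theorem:Kreck}. The necessary hypothesis $H^1(\pi;\La)=0$ is furnished by asphericity: twisted Poincar\'e duality on $X$ gives $H^1(\pi;\La)=H^1(X;\La)\cong H_3(X;\La^w)$, and this last group vanishes because $\La^w$ is free as a left $\La$-module so $H_*(X;\La^w)$ is computed by the chain complex of the contractible universal cover $\widetilde X$ and vanishes in positive degrees.

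Next I would analyse the relevant portion of the Hopf sequence,
\[
H_4(M;\Z^w)\xrightarrow{c_*}H_4(\pi;\Z^w)\to H_1(\pi;\pi_2(M)^w)\to H_3(M;\Z^w)\xrightarrow{c_*}H_3(\pi;\Z^w).
\]
Since $X=B\pi$ is an aspherical closed $4$-manifold with orientation character $w$, $H_4(\pi;\Z^w)=H_4(X;\Z^w)\cong\Z$ is generated by $[X]$, and writing $n:=|c_*[M]|$ we have $c_*[M]=\pm n[X]$. For the $H_3$-terms, twisted Poincar\'e duality (cap with the fundamental class) identifies $H_3(M;\Z^w)\cong H^1(M;\Z)$ and $H_3(X;\Z^w)\cong H^1(X;\Z)=\Hom(\pi^{ab},\Z)$, and $c^*\colon H^1(X;\Z)\toiso H^1(M;\Z)$ is an isomorphism since $c$ is a $2$-equivalence. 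Naturality of the cap product, $c_*(c^*(y)\cap[M])=y\cap c_*[M]$, then identifies the map $c_*\colon H_3(M;\Z^w)\to H_3(\pi;\Z^w)$ with multiplication by $\pm n$ on the torsion-free group $\Hom(\pi^{ab},\Z)$.

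With this in hand, parts (1) and (2) are immediate. If $n\neq 0$, multiplication by $\pm n$ on a torsion-free group is injective, so the connecting map $H_1(\pi;\pi_2(M)^w)\to H_3(M;\Z^w)$ is zero and $H_1(\pi;\pi_2(M)^w)$ is identified with the cokernel of multiplication by $\pm n$ on $\Z$, namely the cyclic group $\Z/n$. If $n=0$, then $c_*\colon H_4(M;\Z^w)\to H_4(\pi;\Z^w)$ is zero, so $\Z\cong H_4(\pi;\Z^w)$ injects into $H_1(\pi;\pi_2(M)^w)$, forcing the latter to be infinite. For the final classification, the forward implication is Kreck's Theorem~\ref{theorem:Kreck} applied to the 2-type statement already proved. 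For the converse, since $\La^w$ is a free $\La$-module one has $H_1(\pi;(\La^w)^r)=0$ for every $r\in\N_0$, so a stable isomorphism $\pi_2(M_1)\oplus\La^{r_1}\cong\pi_2(M_2)\oplus\La^{r_2}$ induces an isomorphism $H_1(\pi;\pi_2(M_1)^w)\cong H_1(\pi;\pi_2(M_2)^w)$ of abelian groups. By parts (1) and (2) this group detects $|c_*[M]|$, so $|c_*[M_1]|=|c_*[M_2]|$, and Theorem~\ref{theorem:Kreck} delivers the $\CP$-stable diffeomorphism.

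The only slightly delicate step is the identification of $c_*$ on $H_3$ with multiplication by $\pm n$ via naturality of the cap product; everything else amounts to bookkeeping within the Hopf sequence and a direct appeal to Kreck's theorem.
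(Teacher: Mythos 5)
Your proposal is correct and follows essentially the same route as the paper: the extended Hopf sequence combined with the commutative square relating $c_*$ on $H_3$ to cap product with $c_*[M]=\pm n[X]$, plus Kreck's theorem for the final classification. The only cosmetic remark is that the ``forward'' implication (stably diffeomorphic $\Rightarrow$ stably isomorphic $\pi_2$) needs only the observation that $\#\CP$ adds a free $\La$-summand to $\pi_2$, not Kreck's theorem; your verification that $H^1(\pi;\La)=0$ and your explicit treatment of the converse are both correct.
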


\begin{proof}
For the proof, we fix twisted orientations on $M$ and $X$. In particular this determines an identification $H_4(X;\Z^w) =\Z$.

Since $c\colon M \to X$ induces an isomorphism on fundamental groups, the map $c^*\colon H^1(X;\Z)\to H^1(M;\Z)$ is an isomorphism. Consider the following commutative square.
\[\xymatrix{
H_3(M;\Z^w)\ar[r]^-{c_*}&H_3(X;\Z^w)\\
H^1(M;\Z)\ar[u]^{-\cap[M]}_\cong&H^1(X;\Z)\ar[l]_-{c^*}^-\cong\ar[u]_{-\cap c_*[M]}
}\]
Since $H^1(X;\Z)$ is torsion free and capping with $[X]$ is an isomorphism, capping with $c_*[M]$ is injective if $c_*[M]\neq 0$. Thus from \cref{thm:hopfseq} we directly obtain the isomorphism $\Z/|c_*[M]| \cong H_1(\pi;\pi_2(M) ^w)$.

If $c_*[M]=0$, then it also follows from \cref{thm:hopfseq} that $\Z\cong H_4(X;\Z^w)$ is a subgroup of $H_1(\pi;\pi_2(M) ^w)$, and hence the latter is infinite.
\end{proof}

We can also show that under certain assumptions, $|c_*[M]|$ can be extracted from the stable isomorphism class of $\Z^w\otimes_{\La}\pi_2(M) $, an abelian group that is frequently much easier to compute than the $\La$-module $\pi_2(M) $.

\begin{thm}
	\label{thm:priexample}
In the notation of Theorem~\ref{thm:aspherical}, assume that $H^1(X;\Z)\neq 0$. Then
\begin{enumerate}
\item If $c_*[M]\neq 0$, then $|c_*[M]|$ is the highest torsion in $\Z^w\otimes_{\La}\pi_2(M) $, that is the maximal order of torsion elements in this abelian group.
\item If $H^2(X;\Z)$ has torsion, then  $|c_*[M]|$ is completely determined by the torsion subgroup of
$\Z^w\otimes_{\La}\pi_2(M) $. This torsion subgroup is trivial if and only if $|c_*[M]|=1$.
\item If $H^2(X;\Z)$ is torsion-free, then  $|c_*[M]|=1$ if and only if $\pi_2(M) $ is projective, and otherwise $|c_*[M]|$ is completely determined by
the torsion subgroup of $\Z^w\otimes_{\La}\pi_2(M) $. This torsion subgroup is trivial if and only if $|c_*[M]|\in \{0,1\}$.
\end{enumerate}
\end{thm}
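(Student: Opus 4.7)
The proof rests on the Hopf sequence of \cref{thm:hopfseq}, which applies because $\pi = \pi_1(X)$ satisfies $H^1(\pi;\La) = 0$: the contractibility of $\widetilde X$ makes $C_*(\widetilde X)$ a free $\La$-resolution of $\Z$, so $H_k(X;\La^w) = \Tor_k^{\La}(\La^w,\Z) = 0$ for $k > 0$, and twisted Poincar\'e duality on $X$ gives $H^1(\pi;\La) \cong H_3(X;\La^w) = 0$. Combining \cref{thm:hopfseq} with the Poincar\'e-duality identifications $H_3(\pi;\Z^w) \cong H^1(X;\Z)$ and $H_2(\pi;\Z^w) \cong H^2(X;\Z) =: A$, and the projection-formula argument from the proof of \cref{thm:aspherical} (which identifies $c_*\colon H_3(M;\Z^w) \to H_3(\pi;\Z^w)$ with multiplication by $n := |c_*[M]|$ on $H^1(X;\Z)$), I would extract the exact sequence
\[
H^1(X;\Z) \xrightarrow{\,n\,} H^1(X;\Z) \xrightarrow{\,\partial\,} \Z^w \otimes_\La \pi_2(M) \to H_2(M;\Z^w) \xrightarrow{\,c_*\,} A \to 0.
\]
Hence $\im(\partial) \cong H^1(X;\Z) \otimes \Z/n$, which has exponent $n$ and, since $H^1(X;\Z)$ is free of positive rank by hypothesis, contains an element of order $n$ whenever $n > 0$.

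To analyze the remaining contribution, apply $\Z^w \otimes_\La -$ to the extension of \cref{prop:mainpri}, $0 \to \ker d_2 \to C_2(M) \oplus \pi_2(M) \to \coker d_3 \to 0$. By \cref{lem:stable-identifications-dual-hd} and the acyclicity of $C_*(\widetilde X)$, $\ker d_2 \scong Z := \im(d_3^X)$ and $\coker d_3 \scong Q := \im(d_2^X)$. Applying $\Z^w \otimes_\La -$ to the short exact sequences $0 \to C_4^X \to C_3^X \to Z \to 0$ (using injectivity of $d_4^X$) and $0 \to Z \to C_2^X \to Q \to 0$ and matching the resulting $\Tor$ groups with the chain complex $C_*(X;\Z^w)$ computing $H_*(X;\Z^w)$, I obtain split short exact sequences
\[
\Z^w \otimes_\La Z \cong H^1(X;\Z) \oplus F_1, \qquad \Z^w \otimes_\La Q \cong A \oplus F_2,
\]
with $F_1, F_2$ free abelian (splittings hold because the quotients embed into the free abelian groups $\Z^w \otimes_\La C_j^X$).

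For $n = 0$ the \cref{prop:mainpri} extension splits, so $\Z^w \otimes_\La \pi_2(M) \cong H^1(X;\Z) \oplus A \oplus (\text{free})$, whose torsion equals the torsion of $A$; this settles the $n = 0$ cases of parts~(2) and~(3). For $n = 1$, \cref{theorem:Kreck} yields $M \sim_{\CP} X$, so $\pi_2(M)$ is stably free, giving torsion-free coinvariants and projectivity. For $n \ge 2$, the $\Tor$ long exact sequence attached to the \cref{prop:mainpri} extension has connecting map equal to multiplication by $n$ on the $H^1(X;\Z)$-summand of $\Z^w \otimes_\La Z$ (by linearity in the extension class and the $n = 1$ normalization via $\pi_2(X) = 0$); combined with the Hopf sequence, this determines the torsion subgroup and shows its highest element order is exactly $n$. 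Finally, the projectivity claim in~(3) follows because a projective $\La$-module $\pi_2(M)$, as a direct summand of a free $\La$-module, yields a free abelian $\Z^w \otimes_\La \pi_2(M)$; this rules out $n \ge 2$ (where $\im(\partial) \neq 0$ is torsion) and, combined with the $n = 0$ case (where $\pi_2(M) \scong Z \oplus Q$ is not projective since $Z$ has strict projective dimension one when $\pi$ has cohomological dimension four), forces $n = 1$.

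The main obstacle is the detailed analysis of the $\Tor$ connecting homomorphism for $n \ge 2$: confirming that it is multiplication by $n$ on the $H^1$-summand, and controlling the resulting abelian-group extension so that the torsion has exponent exactly $n$ rather than merely dividing $n^2$. Once this is in hand, the rest reduces to bookkeeping with abelian groups.
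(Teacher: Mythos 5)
Your route via the extended Hopf sequence (\cref{thm:hopfseq}) together with the Tor long exact sequence is genuinely different from the paper's argument. The paper never invokes the Hopf sequence here: it works directly with the concrete model $E_m$ from \cref{lem:extension}, establishes in \cref{lem:Zextension} the presentation
\[
\Z^w\otimes_{\La}E_m\;\cong\;\frac{(\Z^w\otimes_{\La} C_2)\oplus (\Z^w\otimes_{\La} C_3)}
{\{((\id_{\Z^w}\otimes d_3)(a),\,ma)\mid a\in \Z^w\otimes_{\La}C_3\}},
\]
and then reads off the answer by putting $\id_{\Z^w}\otimes d_3$ in Smith normal form $\left(\begin{smallmatrix}0&0\\0&D\end{smallmatrix}\right)$ with $D=\operatorname{diag}(\delta_1,\dots,\delta_k)$ and a zero block of width $a\geq 1$ (this is precisely where $H^1(X;\Z)\neq 0$ enters). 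This yields the closed formula
\[
\Z^w\otimes_{\La}E_m\;\cong_s\;(\Z/m)^a\oplus\bigoplus_{i=1}^k\Z/\gcd(\delta_i,m),
\]
from which all three parts of the theorem are elementary case analysis, with no abelian-group extension left to resolve, since each $\gcd(\delta_i,m)$ visibly divides $m$.

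Most of your intermediate steps check out: identifying $c_*\colon H_3(M;\Z^w)\to H_3(\pi;\Z^w)$ with multiplication by $n$, the splittings $\Z^w\otimes_\La Z\cong H^1(X;\Z)\oplus F_1$ and $\Z^w\otimes_\La Q\cong A\oplus F_2$, and the linearity of the Tor connecting map in the extension class. But the point you flag at the end is a genuine gap, not bookkeeping. Knowing the short exact sequence
\[
0\to (\Z/n)^a\oplus F_1\to (\text{free})\oplus\bigl(\Z^w\otimes_\La\pi_2(M)\bigr)\to A\oplus F_2\to 0
\]
does not determine the torsion of the middle term: an abelian-group extension of $\Z/\delta$ by $\Z/n$ need not be $\Z/n\oplus\Z/\gcd(\delta,n)$, and $\Ext^1_\Z(A,(\Z/n)^a\oplus F_1)$ is generally nonzero, so a priori the highest torsion could exceed $n$. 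To close the gap you would have to compute the extension class of this sequence, and any honest computation of it amounts to diagonalising $\id_{\Z^w}\otimes d_3$ --- which is exactly the step the paper performs directly via \cref{lem:Zextension}. So the proposal identifies a sound framework and the correct obstruction, but the core of part~(1) --- that the highest torsion is \emph{exactly} $n$ rather than some multiple of it --- remains unproved.
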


Except for the statement that $c_*[M]$ corresponds to the highest torsion in $\Z^w\otimes_{\La}\pi_2(M) $, the above theorem can be proven more easily using the exact sequence from \cref{thm:hopfseq}. But since we believe that this statement is worth knowing, we take a different approach and start with some lemmas. For the lemmas we do not yet need the assumption that $H^1(\pi;\Z) = H^1(X;\Z) \neq 0$; we will point out in the proof of \cref{thm:priexample} where this hypothesis appears. Choose a handle decomposition of $X$ with a single $4$-handle and a single $0$-handle and let $(C_*,d_*)$ denote the $\La$-module chain complex of $X$ associated to this handle decomposition.

\begin{lemma}
	\label{lem:generator}
The group $\Ext_{\La}^1(\coker d_3,\ker d_2) \cong H_{4}(\pi;\Z^w)\cong \Z$ is generated by
 \[
 0\ra\ker d_2\xrightarrow{\,i\,} C_2\xrightarrow{\,p\,} \coker d_{3}\ra 0.
 \]
\end{lemma}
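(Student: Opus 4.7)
The plan is to apply \cref{thm:implies-prim-theorems} (equivalently, \cref{prop:mainpri}) directly to $X$ itself, viewed as a closed 4-manifold with classifying map $c = \id_X \colon X \to X = B\pi$. Since $X$ is aspherical, the Hurewicz isomorphism gives
\[
H_2(C_*) \cong H_2(\wt{X};\Z) \cong \pi_2(\wt{X}) \cong \pi_2(X) = 0,
\]
that is, $\ker d_2 = \im d_3$. Consequently the general extension of \cref{prop:mainpri},
\[
0 \to \ker d_2 \xrightarrow{(i,-p)^T} C_2 \oplus H_2(C_*) \xrightarrow{(p',i')} \coker d_3 \to 0,
\]
collapses (the summand $H_2(C_*)$ vanishes and the projection $p \colon \ker d_2 \to \ker d_2/\im d_3$ is zero) to exactly the short exact sequence
\[
0 \to \ker d_2 \xrightarrow{\,i\,} C_2 \xrightarrow{\,p\,} \coker d_3 \to 0
\]
displayed in the lemma.

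Under the natural isomorphism
\[
\Psi \colon H_4(\pi;\Z^w) \raiso \Ext^1_{\La}(\coker d_3, \ker d_2)
\]
of \cref{cor:comparison} (applied with $X^{(2)}$ both as the 2-complex and as the 2-skeleton of the chosen handle decomposition of $B\pi = X$), \cref{thm:implies-prim-theorems} identifies the image of $c_*[X] = [X] \in H_4(X;\Z^w) \cong H_4(\pi;\Z^w)$ with the class of this collapsed extension. But $X$ is a closed $4$-manifold and we have fixed a twisted orientation on it, so by twisted Poincar\'e duality $[X]$ is a generator of $H_4(X;\Z^w) \cong H_4(\pi;\Z^w) \cong \Z$. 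Therefore the extension of the lemma is the image of a generator under an isomorphism, and so generates $\Ext^1_{\La}(\coker d_3, \ker d_2)$, as claimed.

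There is essentially no obstacle here: the entire content is packaged in \cref{prop:mainpri}/\cref{thm:implies-prim-theorems}, and asphericity does the rest of the work by killing the $H_2(C_*)$ summand. The only mild point worth double-checking is the bookkeeping about which variance the $\Ext$ group has (the lemma statement lists the arguments in the opposite order to the convention used in \cref{sec:pri}, but both refer to the same extension class of $0 \to \ker d_2 \to C_2 \to \coker d_3 \to 0$, so this is only a notational matter).
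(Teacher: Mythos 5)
Your proposal is correct and is essentially the paper's own argument: the paper likewise applies \cref{prop:mainpri} to $X$ itself with $c = \id_X$, using asphericity to collapse the extension to $0\to\ker d_2\to C_2\to\coker d_3\to 0$ and the fact that $[X]$ generates $H_4(X;\Z^w)\cong\Z$. Your remark about the order of the arguments in the $\Ext$ group is also apt — the lemma's statement has them transposed relative to the convention used throughout \cref{sec:pri}, but the intended extension class is the same.
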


\begin{proof}
By \cref{prop:mainpri2} the extension $0 \to \ker d_2 \xrightarrow{i} C_2\xrightarrow{p} \coker d_{3}\to 0$ corresponds to $c_*[X] \in H_4(\pi;\Z^w)$, where $c\colon X\to B\pi$ is the map classifying the fundamental group. Since in this case $X$ is a model for $B\pi$, we can take $c = \id_X$ with $[X]$ a generator of $H_4(X;\Z^w)$.
\end{proof}

\begin{lemma}
	\label{lem:extension}
Using the generator from \cref{lem:generator}, the extension corresponding to $m\in \Z$ is given by
\[0\ra \ker d_2\xrightarrow{(0,\id)^T}(C_2\oplus\ker d_2)/\{(i(a),ma)\mid a\in \ker d_2\}\xrightarrow{p\circ p_1}\coker d_{3}\ra 0,\]
where $p_1$ is the projection onto the first summand.
\end{lemma}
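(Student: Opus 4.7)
The approach is to realize the stated extension $E_m$ as the pushforward (pushout) of the generator $\xi \colon 0 \to \ker d_2 \xrightarrow{i} C_2 \xrightarrow{p} \coker d_3 \to 0$ from \cref{lem:generator} along multiplication by $m$ on $\ker d_2$, and then invoke the standard description of scalar multiplication in $\Ext^1$ via pushforward on the coefficient module.

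Since $X$ is aspherical, $\wt{X}$ is contractible and $H_2(C_*) = \ker d_2/\im d_3 = 0$, giving the crucial identity $\ker d_2 = \im d_3$. This identity is used twice: first, so that $\xi$ is genuinely exact at $C_2$; and second, to show that $p \circ p_1 \colon C_2 \oplus \ker d_2 \to \coker d_3$ descends to the quotient $E_m$. Indeed, for any $a \in \ker d_2 = \im d_3$, we have $p(i(a)) = 0$ in $\coker d_3 = C_2/\im d_3$, so the submodule $N := \{(i(a), ma) : a \in \ker d_2\}$ lies in the kernel of $p \circ p_1$.

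The verification that $0 \to \ker d_2 \xrightarrow{(0,\id)^T} E_m \xrightarrow{p \circ p_1} \coker d_3 \to 0$ is exact is then a routine diagram chase. Injectivity of $(0, \id)^T$ follows from injectivity of $i$; surjectivity of $p \circ p_1$ is inherited from $p$; for exactness in the middle, any $[(c, b)]$ with $p(c) = 0$ has $c \in \im d_3 = \ker d_2$, so $c = i(a')$ for some $a'$, and then $(c, b) \equiv (0, b - ma') \pmod{N}$, placing it in the image of $(0, \id)^T$.

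Finally, to identify $[E_m] = m$ in $\Ext^1_{\La}(\coker d_3, \ker d_2) \cong \Z$, I observe that $E_m$ coincides with the pushforward of $\xi$ along multiplication by $\pm m$ on $\ker d_2$, which realises scalar multiplication in $\Ext^1$ by standard Yoneda theory. The main technical subtlety is a sign: the pushout construction produces relations of the form $(i(a), -ma)$, whereas the lemma uses $(i(a), +ma)$, and these are generally distinct submodules of $C_2 \oplus \ker d_2$. Since $\Ext \cong \Z$ admits both $\pm 1$ as generators, this sign is absorbed into the choice of isomorphism $\Ext \cong \Z$ implicit in \cref{lem:generator}; once that convention is fixed, the pushforward interpretation directly gives $[E_m] = m \cdot [\xi]$.
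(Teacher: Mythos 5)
Your proof is correct, and it takes a genuinely different route from the paper's. The paper first verifies the case $m=1$ by exhibiting the explicit isomorphism $(c,a)\mapsto (c-i(a),a)$ onto the generating extension, and then shows by a direct computation with the Baer sum formula (constructing the submodules $L$ and $L'$ and an explicit isomorphism of the quotient) that the Baer sum of the extensions for $m$ and $m'$ is the extension for $m+m'$; additivity in $m$ then gives the general statement. You instead recognise $E_m$ on the nose as the pushout of the generating extension along multiplication by $\mp m$ on $\ker d_2$ and invoke additivity of the functor $\Ext^1_{\La}(\coker d_3,-)$, replacing the paper's hands-on Baer-sum manipulation by standard Yoneda theory; this is shorter, at the cost of quoting the fact that pushforward along $m\cdot\id$ realises multiplication by $m$. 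Your use of asphericity (i.e.\ $\ker d_2=\im d_3$) to see that $p\circ p_1$ descends to the quotient, and your exactness check, are both correct and essentially forced. Your sign remark is well taken, and in fact the paper's own $m=1$ step has exactly the same subtlety: the isomorphism $(c,a)\mapsto(c-i(a),a)$ carries the inclusion $a\mapsto[(0,a)]$ to $-i$ rather than $i$, so with the literal conventions the class of $E_m$ is $-m$ times the class of $0\to\ker d_2\xrightarrow{i}C_2\to\coker d_3\to 0$. As you say, this is absorbed into the choice of generator in \cref{lem:generator}, and it is harmless for everything downstream, since the paper only ever uses $|c_*[M]|$ and the isomorphism of middle terms $E_m\cong E_{-m}$.
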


Note that $m=0$ gives the direct sum $E_0 = \ker d_2 \oplus \coker d_3$.

\begin{proof}
	In the case $m=1$, the group $(C_2\oplus\ker d_2)/\{(i(a),a)\mid a\in \ker d_2\}$ is isomorphic to $(C_2\oplus \ker d_2)/\{(0,a)\mid a\in \ker d_2\}\cong C_2$, where this isomorphism is induced by the isomorphism $C_2\oplus \ker d_2\to C_2\oplus\ker d_2$ given by $(c,a)\mapsto (c-i(a),a)$. Under this isomorphism the extension from the lemma is mapped to the extension from \cref{lem:generator}. Hence it suffices to show that the Baer sum of two extension for $m,m'$ as in the lemma is isomorphic to the given extension for $m+m'$.
	
	Let $L$ be the submodule of $C_2\oplus \ker d_2\oplus C_2\oplus \ker d_2$ consisting of all $(c_1,a_1,c_2,a_2)$ with $p(c_1)=p(c_2)$, let $L'$ be the submodule
	\[L':=\{(i(a),ma+b,i(a'),m'a'-b)\mid a,a',b\in\ker d_2\}\]
	and let $E:=L/L'$.  The Baer sum~\cite[Definition~3.4.4.]{Weibel94}
	of the extensions for $m$ and $m'$ is given by
	\[0\ra \ker d_2\ra E\ra \coker d_3\ra 0,\]
	where the map $\ker d_2\to E$ is given by $a\mapsto [0,a,0,0]$ and the map $E\to \coker d_3$ is given by $[(c_1,a_1,c_2,a_2)]\mapsto p(c_1)$.
	The map
	\[\begin{array}{rcl}
	f \colon L & \ra & C_2\oplus \ker d_2\oplus\ker d_2\oplus \ker d_2 \\
	(c_1,a_1,c_2,a_2) & \mapsto & (c_1,a_1+a_2+m'(c_1-c_2),c_2-c_1,a_2)
	\end{array}\]
	defines an isomorphism, and the subset $L'$ is mapped to \begin{align*}f(L')&=\{(i(a),(m+m')a,a'-a,m'a'-b)\mid a,a',b\in \ker d_2\}\\&=\{(i(a),(m+m')a,a',b)\mid a,a',b\in\ker d_2\}.\end{align*}
	Hence $f$ induces an isomorphism
	\[E\cong (C_2\oplus (\ker d_2)^3)/f(L')\cong (C_2\oplus\ker d_2)/\{(i(a),(m+m')a)\mid a\in \ker d_2\}.\]
	This defines an isomorphism from the Baer sum to the extension for $m+m'$ from the statement of the lemma.
\end{proof}

\begin{lemma}
	\label{lem:Zextension}
	Let $0\to \ker d_2\to E_m\to \coker d_3\to 0$
	be the extension for $m\in \Z$ from \cref{lem:extension}. Then
	\[\Z^w\otimes_{\La}E_m\cong \frac{\big((\Z^w\otimes_{\La} C_2)\oplus (\Z^w\otimes_{\La} C_3)\big)}{\{(\id^w_\Z\otimes d_3)a,ma)\mid a\in \Z^w\otimes_{\La}C_3\}}.\]
\end{lemma}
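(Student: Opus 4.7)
The plan is to apply the right exact functor $\Z^w \otimes_\La -$ to the defining short exact sequence of $E_m$, and then to identify $\Z^w \otimes_\La \ker d_2$ with $\Z^w \otimes_\La C_3$, using that $X$ is aspherical and has a single $4$-handle.

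First, by \cref{lem:extension}, $E_m$ fits in a short exact sequence $0 \to \ker d_2 \to C_2 \oplus \ker d_2 \to E_m \to 0$ whose left map is $a \mapsto (i(a), ma)$. Applying $\Z^w \otimes_\La -$ expresses $\Z^w \otimes_\La E_m$ as the quotient of $(\Z^w \otimes_\La C_2) \oplus (\Z^w \otimes_\La \ker d_2)$ by the image of the map $b \mapsto ((\id^w_\Z \otimes i)(b), mb)$. Since $X$ is aspherical, $(C_*,d_*)$ is a free $\La$-resolution of $\Z$ above $C_0$, so $\ker d_3 = \im d_4$ and $d_4$ is injective (there is no $C_5$). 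Writing $\ol{d_3} \colon C_3 \twoheadrightarrow \ker d_2$ for the corestriction of $d_3$, this yields a further short exact sequence
\[0 \to C_4 \xrightarrow{d_4} C_3 \xrightarrow{\ol{d_3}} \ker d_2 \to 0.\]

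The key step is to show that $\id^w_\Z \otimes d_4 = 0$. By the single-$4$-handle hypothesis, $C_4 \cong \La$, so $\Z^w \otimes_\La C_4 \cong \Z$ as an abelian group. The chain complex $\Z^w \otimes_\La C_*$ computes $H_*(X;\Z^w)$, so $\ker(\id^w_\Z \otimes d_4) = H_4(X;\Z^w) \cong \Z$. The image of $\id^w_\Z \otimes d_4$ is then a cyclic group isomorphic to a quotient of $\Z$ by a subgroup isomorphic to $\Z$; since this image embeds in the torsion-free group $\Z^w \otimes_\La C_3$, the kernel must be all of $\Z^w \otimes_\La C_4$, giving $\id^w_\Z \otimes d_4 = 0$. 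Applying $\Z^w \otimes_\La -$ to the displayed short exact sequence now shows that $\id^w_\Z \otimes \ol{d_3} \colon \Z^w \otimes_\La C_3 \to \Z^w \otimes_\La \ker d_2$ is an isomorphism.

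Substituting this isomorphism into the description of $\Z^w \otimes_\La E_m$ yields the claimed formula: for $a \in \Z^w \otimes_\La C_3$ with image $b = (\id^w_\Z \otimes \ol{d_3})(a)$, we have $(\id^w_\Z \otimes i)(b) = (\id^w_\Z \otimes d_3)(a)$ while $mb$ corresponds to $ma$. The main obstacle is establishing the vanishing $\id^w_\Z \otimes d_4 = 0$: without both the asphericity assumption and the single-$4$-handle assumption, the extra relation $\{(0, (\id^w_\Z \otimes d_4)(c)) : c \in \Z^w \otimes_\La C_4\}$ would in general remain in the quotient, and the advertised formula would be incorrect.
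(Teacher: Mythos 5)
Your proof is correct and follows essentially the same route as the paper's: both establish $\id_{\Z^w}\otimes d_4=0$ from $H_4(X;\Z^w)\cong\Z\cong\Z^w\otimes_\La C_4$, use right exactness (via the exactness of $C_*$ in positive degrees coming from asphericity) to identify $\Z^w\otimes_\La\ker d_2$ with $\Z^w\otimes_\La C_3$ through the corestriction of $d_3$, and then substitute this into the presentation of $E_m$ from \cref{lem:extension}. Your extra detail on why the image of $\id_{\Z^w}\otimes d_4$ must vanish (a finite cyclic group embedding in a torsion-free group) is a welcome elaboration of a step the paper states tersely.
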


\begin{proof}
	Note that $\id_{\Z^w}\otimes d_4=0$, since $X$ has orientation character $w$ and thus $H_4(X;\Z^w)\cong \Z^w\otimes_{\La}C_4\cong \Z$.
	By exactness, $\ker d_2= \im d_3\cong \coker d_4$. By right exactness of the tensor product, it follows that
	$\Z^w\otimes_{\La}\ker d_2\cong \coker(\id_{\Z^w}\otimes d_4)\cong \Z^w\otimes_{\La}C_3$. Tensor the diagram
	\[\xymatrix{
	C_3\ar@{->>}[rd]\ar[rr]^{d_3}&&C_2\\
	&\ker d_2\ar[ru]^{i}&	
	}\]
	with $\Z^w$ over $\La$, to obtain
	\[\xymatrix{
		\Z^w\otimes_{\La} C_3\ar[rd]^\cong\ar[rr]^{\id_{\Z^w} \otimes d_3}&&\Z^w\otimes_{\La}C_2\\
		&\Z^w\otimes_{\La}\ker d_2\ar[ru]^{\id_{\Z^w}\otimes i}&	
	}\]
	The lemma now follows from right exactness of the tensor product.
	\end{proof}

	\begin{proof}[Proof of \cref{thm:priexample}]
		Let $E_m$ be as in \cref{lem:Zextension}. By \cref{prop:mainpri1} and \cref{prop:mainpri2}, $\pi_2(M) $ is stably isomorphic to $E_m$ as a $\La$-module if $c_*[M]=\pm m$. All of the  conditions appearing in \cref{thm:priexample} are invariant under adding a free $\La$-summand to $\pi_2(M) $. Since for any extension and its negative, the middle groups are isomorphic, we have $E_m\cong E_{-m}$. Hence we can only obtain a distinction up to sign.
		
The boundary map $\id_{\Z^w} \otimes d_4 \colon \Z^w \otimes_{\La}C_4 \to \Z^w \otimes_{\La} C_3$ vanishes since $H_4(X;\Z^w) \cong \Z$ and $X$ has a unique $4$-handle.  Hence $H_3(X;\Z^w) \cong \ker(\Id_{\Z^w} \otimes d_3)$.  By Poincar\'{e} duality \[H^1(\pi;\Z) \cong H^1(X;\Z) \cong H_3(X;\Z^w) \cong \ker(\Id_{\Z^w} \otimes d_3).\]

	\begin{claim}
	The cokernel of $\Id_{\Z^w} \otimes d_3 \colon \Z^w \otimes_{\La} C_3 \to \Z^w \otimes_{\La} C_2$  is stably isomorphic to $H_2(\pi;\Z^w)\cong H^2(\pi;\Z)$.
	\end{claim}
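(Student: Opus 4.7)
My plan is to exploit the fact that after tensoring with $\Z^w$ we land in the category of abelian groups, where subgroups of free abelian groups are free, which forces the relevant short exact sequence to split.

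First I would observe that, since $(C_*,d_*)$ is the $\La$-chain complex of the aspherical manifold $X=B\pi$, the complex $\Z^w\otimes_\La C_*$ computes $H_*(\pi;\Z^w)$. In particular there is a short exact sequence of abelian groups
\[
0\to H_2(\pi;\Z^w)\to \coker(\Id_{\Z^w}\otimes d_3)\to \im(\Id_{\Z^w}\otimes d_2)\to 0,
\]
where the left map is induced by the inclusion $\ker(\Id_{\Z^w}\otimes d_2)\hookrightarrow \Z^w\otimes_\La C_2$ and the right map is $\Id_{\Z^w}\otimes d_2$ followed by the identification with its image.

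Next I would note that $\Z^w\otimes_\La C_1\cong \Z^{\rank C_1}$ is a finitely generated free abelian group (as a $\Z$-module, $\Z^w$ is just $\Z$, and $C_1$ is $\La$-free of finite rank). Hence the subgroup $\im(\Id_{\Z^w}\otimes d_2)$ is free abelian, so the above short exact sequence splits, giving
\[
\coker(\Id_{\Z^w}\otimes d_3)\cong H_2(\pi;\Z^w)\oplus \im(\Id_{\Z^w}\otimes d_2).
\]
Since the second summand is free abelian of finite rank, this exhibits $\coker(\Id_{\Z^w}\otimes d_3)$ as stably isomorphic (over $\Z$) to $H_2(\pi;\Z^w)$.

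Finally, the identification $H_2(\pi;\Z^w)\cong H^2(\pi;\Z)$ is just twisted Poincar\'e duality applied to the closed aspherical $4$-manifold $X$: capping with the twisted fundamental class $[X]\in H_4(X;\Z^w)$ gives an isomorphism $H^2(X;\Z)\xrightarrow{\cong} H_2(X;\Z^w)$, and both sides agree with their group (co)homology counterparts because $X=B\pi$. The only subtle point is the choice of conventions for the free-abelian correction term, but since stable isomorphism absorbs any finitely generated free abelian summand this does not affect the conclusion; no obstruction arises here, and the hypothesis $H^1(\pi;\Z)\neq 0$ (which guarantees $\ker(\Id_{\Z^w}\otimes d_3)\neq 0$) is not needed for the claim itself and will only be used in the subsequent arguments.
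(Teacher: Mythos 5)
Your proof is correct and follows essentially the same route as the paper: both use the short exact sequence $0\to H_2(\pi;\Z^w)\to\coker(\Id_{\Z^w}\otimes d_3)\to\im(\Id_{\Z^w}\otimes d_2)\to 0$ together with the fact that $\im(\Id_{\Z^w}\otimes d_2)$, being a subgroup of the finitely generated free abelian group $\Z^w\otimes_\La C_1$, is free, so the sequence splits and the free summand is absorbed by stable isomorphism. Your explicit treatment of the identification $H_2(\pi;\Z^w)\cong H^2(\pi;\Z)$ via twisted Poincar\'e duality and the remark that $H^1(\pi;\Z)\neq 0$ is not needed here are both accurate.
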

		
		We have a cochain complex:
		\[\xymatrix @C+0.4cm {\Z^w \otimes_{\La} C_3 \ar[r]^-{\id_{\Z^w} \otimes d_3} & \Z^w \otimes_{\La} C_2 \ar[r]^-{\id_{\Z^w} \otimes d_2} & \Z^w \otimes_{\La} C_1}\]
		whose cohomology $\ker(\id_{\Z^w} \otimes d_2)/\im(\id_{\Z^w} \otimes d_3)$ is isomorphic to $H_2(\pi;\Z^w)$.  There is an exact sequence:
		\[0 \ra \frac{\ker(\id_{\Z^w} \otimes d_2)}{\im(\id_{\Z^w} \otimes d_3)} \ra \frac{\Z^w \otimes_{\La} C_2}{\im(\id_{\Z^w} \otimes d_3)} \ra \frac{\Z^w \otimes_{\La} C_2}{\ker(\id_{\Z^w} \otimes d_2)} \ra 0.\]
		Since $\Z^w \otimes_{\La} C_2/\ker(\id_{\Z^w} \otimes_{\La} d_2) \cong \im(\id_{\Z^w} \otimes d_3)$, and $\im(\id_{\Z^w} \otimes d_2)$ as a submodule of the free abelian group $\Z^w \otimes_{\La} C_1$ is free, we see that there is a stable isomorphism of the central group \[(\Z^w \otimes_{\La} C_2)/\im(\id_{\Z^w} \otimes d_3) = \coker(\Id_{\Z^w} \otimes d_3 \colon \Z^w \otimes_{\La} C_3 \ra \Z^w \otimes_{\La} C_2)\] with $H_2(\pi;\Z^w)$, as claimed.

		By applying elementary row and column operations, $\id_{\Z^w}\otimes d_3\colon \Z^w\otimes_{\La}C_3\to \Z^w\otimes_{\La}C_2$ can be written as
		\begin{equation}\label{eq:extension}\Z^a\oplus\Z^k\xrightarrow{\left(\begin{smallmatrix}
			0&0\\
			0&D
			\end{smallmatrix}\right)}\Z^b\oplus \Z^k,
		\end{equation}
		with $a\geq 1, k,b\geq 0$ and $D$ a diagonal matrix with entries $\delta_1,\ldots, \delta_k\in\bbN\setminus\{0\}$.
		In order to see that $a \geq 1$, we use that the kernel is $H^1(\pi;\Z)$ together with the hypothesis that $H^1(X;\Z) \cong H^1(\pi;\Z) \neq 0$.
		
		 Using the description from \cref{lem:Zextension}, stably
		 \[\Z^w\otimes_{\La} E_m\cong_s (\Z/m\Z)^a\oplus\bigoplus_{i=1}^k\Z/\mathrm{gcd}(\delta_i,m)\Z.\]
		 If $m\neq 0$, then the highest torsion in $\Z^w\otimes_{\La}E_m$ is $m$-torsion.  Thus $E_{m}$ is not isomorphic to $E_{m'}$ whenever $m \neq m'$ and both are nonzero.  It remains to distinguish $E_0$ from $E_m$ for $m\neq 0$ (recall $\gcd(\delta_i,0) = \delta_i$).
		
		 First let us assume that the torsion subgroup $TH^2(\pi;\Z)$ is nontrivial. Then the cokernel of \eqref{eq:extension} is not free, and hence there exists $1\leq j\leq k$ with $\delta_j>1$. Thus $\Z^w\otimes_{\La}E_0\cong \bigoplus_{i=1}^k\Z/\delta_i\Z$ is not torsion free. Since $\Z^w\otimes_{\La}E_m$ can only contain $\delta_i$-torsion if $\delta_i$ divides $m$, $\Z^w\otimes_{\La}E_0$ and $\Z^w\otimes_{\La}E_m$ can only be stably isomorphic if all the $\delta_i$ divide $m$. This already implies $|m|>1$. But in this case
		 \[\Z^w\otimes_{\La}E_m\cong (\Z/m\Z)^a\oplus \bigoplus_{i=1}^k\Z/\delta_i\Z\cong (\Z/m\Z)^a\oplus \Z^w\otimes_{\La}E_0,\]
		 and so $\Z^w\otimes_{\La}E_0$ and $\Z^w\otimes_{\La}E_m$ are not stably isomorphic, since $a \geq 1$. This completes the proof of case where $TH^2(\pi;\Z)$ is nontrivial.
		
		 If $H^2(\pi;\Z)$ is torsion free, then all the $\delta_i$ are $1$ and $\Z^w\otimes_{\La}E_m\cong_s (\Z/m\Z)^a$. This already distinguishes the cases $|m|>1$ and $m\in\{0,\pm 1\}$. For $m=\pm 1$, we have $E_m\cong C_2$ is free and for $m=0$ we have $E_0=\ker d_2 \oplus \coker d_3$. Note that $\ker d_2$ is not projective, since otherwise
		 \[0 \ra \ker d_2 \ra C_2 \ra C_1 \ra C_0 \] would be a projective $\La$-module resolution for $\Z$, which cannot be chain equivalent to the $\La$-module chain complex of a closed aspherical $4$-manifold with fundamental group $\pi$.  Therefore, $E_0$ is not projective.
	\end{proof}
	
\begin{rem}
	Note that the assumption that $H^1(X;\Z) \cong H^1(\pi;\Z)\neq 0$ is equivalent to the abelianisation $\pi_{\mathrm{ab}}$ being infinite, since $H^1(\pi;\Z)=\Hom_\Z(\pi_{\mathrm{ab}},\Z)$ and $\pi_{\mathrm{ab}}$ is finitely generated. This assumption is crucial; without it there exist $m>m'\geq 0$ with $\Z^w\otimes_{\La}E_m\cong \Z^w\otimes_{\La}E_{m'}$. For example this would happen for $m=\prod_{i=1}^k \delta_i$ and $m'=2m$, where the $\delta_i$ are as in the proof of \cref{thm:priexample}.
\end{rem}

\section{Examples demonstrating necessity of hypotheses}\label{section:examples-necessity}

In the preceding sections we saw that for large classes of finitely presented groups, such as infinite groups $\pi$ with $H^1(\pi;\La)=0$,  the quadruple $(\pi_1,w,\pi_2,k)$ detects the  $\CP$-stable diffeomorphism type.  Moreover for fundamental groups of aspherical $4$-manifolds even $(\pi_1,w,\pi_2)$ suffices. In this section we give examples where the data $(\pi_1,w,\pi_2,k)$ does not suffice to detect the $\CP$-stable diffeomorphism classification, showing that the hypothesis $H^1(\pi;\La)=0$ is required.   We also provide examples where the data is sufficient to detect the classification, but the $k$-invariant is relevant, so all of the data is necessary.

\subsection{The 2-type does not suffice in general}\label{section:post-2-type-does-not-suffice-lens-spaces}

In this section, as promised in \cref{sec:necessity-intro}, we give examples of orientable manifolds that are not $\CP$-stably diffeomorphic, but with isomorphic 2-types.

It will be helpful to recall the construction of 3-dimensional lens spaces $L_{p,q}$.  Start with the unit sphere in $\mathbb{C}^2$ and let $\xi$ be a $p$th root of unity. On the unit sphere, $\Z/p$ acts freely by $(z_1,z_2)\mapsto (\xi z_1,\xi^{q} z_2)$ for $0<q<p$ such that $p,q$ are coprime. The quotient of $S^3 \subset \mathbb{C}^2$ by this action is $L_{p,q}$.

Now fix an integer $p \geq 2$, let $\pi := \Z/p \times \Z$ and consider the 4-manifolds $N_{p,q} := L_{p,q} \times S^1$.    Note that $\pi_2(N_{p,q}) = \pi_2(\wt{N_{p,q}}) = \pi_2(S^3 \times \R) =0$.  Thus the stable Postnikov 2-type is trivial for all $p,q$, and the number $q$ cannot possibly be read off from the 2-type.

Nevertheless, we will show below that for most choices of $p$, there are $q,q'$ for which the resulting manifolds $N_{p,q}$ and $N_{p,q'}$ are not $\CP$-stably diffeomorphic. The smallest pair is $N_{5,1}$ and $N_{5,2}$, corresponding to the simplest homotopically inequivalent lens spaces $L_{5,1}$ and $L_{5,2}$ with isomorphic fundamental groups.  In general, compute using the equivalence of \eqref{lensit:1} and \eqref{lensit:4} of \cref{prop:TFAE} below.

Although $\pi$ is infinite, by the following lemma the group $\pi$ has two ends, so these examples are consistent with our earlier investigations.

\begin{lemma}
We have that $H^1(\pi;\La) \cong \Z$.
\end{lemma}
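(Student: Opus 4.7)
The plan is to compute $H^1(\pi;\La)$ using Shapiro's lemma together with the fact that $\Z$ sits inside $\pi = \Z/p \times \Z$ as a subgroup of finite index $p$ (namely the second factor).

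First I would observe that, since $\Z \leq \pi$ has finite index, induction and coinduction of $\Z[\Z]$ agree, and both give
\[
\CoInd_\Z^\pi \Z[\Z] \;\cong\; \Ind_\Z^\pi \Z[\Z] \;=\; \Z[\pi]\otimes_{\Z[\Z]} \Z[\Z] \;\cong\; \La
\]
as left $\pi$-modules. Shapiro's lemma therefore gives a natural isomorphism
\[
H^n(\pi;\La) \;\cong\; H^n(\Z;\Z[\Z])
\]
for every $n\geq 0$.

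Next I would compute the right-hand side directly from the standard length-one resolution $0 \to \Z[\Z] \xrightarrow{t-1} \Z[\Z] \to \Z \to 0$, obtaining
\[
H^0(\Z;\Z[\Z]) \;=\; \Z[\Z]^{\Z} \;=\; 0,
\qquad
H^1(\Z;\Z[\Z]) \;=\; \Z[\Z]/(t-1)\Z[\Z] \;\cong\; \Z,
\]
and vanishing in higher degrees. In particular $H^1(\pi;\La)\cong\Z$, as claimed.

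There is essentially no obstacle here; the only thing to double-check is that Shapiro's lemma can be applied in the form stated, which requires the finite-index hypothesis used to identify $\Ind$ with $\CoInd$. (As a sanity check, the conclusion is also consistent with the ends theorem quoted from Geoghegan: $\pi$ is virtually $\Z$, hence has two ends, and two-ended groups satisfy $H^1(\pi;\La)\cong\Z$.)
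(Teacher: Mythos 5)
Your argument is correct, but it takes a genuinely different route from the paper. You work purely algebraically: since $\Z\leq \pi=\Z/p\times\Z$ has finite index $p$, the induced and coinduced modules agree, $\Z[\pi]\otimes_{\Z[\Z]}\Z[\Z]\cong \Hom_{\Z[\Z]}(\Z[\pi],\Z[\Z])\cong\La$, so Shapiro's lemma gives $H^1(\pi;\La)\cong H^1(\Z;\Z[\Z])$, which the resolution $0\to\Z[\Z]\xrightarrow{t-1}\Z[\Z]\to\Z\to 0$ identifies with $\Z[\Z]/(t-1)\cong\Z$. This is the standard proof that group cohomology with group-ring coefficients (equivalently, the number of ends) is invariant under passing to finite-index subgroups, and it is entirely self-contained. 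The paper instead exploits the manifold already in hand: $N_{p,q}=L_{p,q}\times S^1$ becomes a model for $B\pi$ after attaching cells of dimension $\geq 3$, which does not change $H^1(-;\La)$, so $H^1(\pi;\La)\cong H^1(N_{p,q};\La)$; Poincar\'e duality and the universal cover then give $H^1(N_{p,q};\La)\cong H_3(\wt{N}_{p,q};\Z)\cong H_3(S^3\times\R;\Z)\cong\Z$. Your approach generalises immediately to any virtually cyclic group without needing an aspherical (or any) manifold model, while the paper's is a one-line geometric computation consistent with the duality arguments used elsewhere in that section. Both are complete; your only caveat, that Shapiro's lemma requires identifying the induced with the coinduced module, is exactly handled by the finite-index hypothesis as you note.
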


\begin{proof}
We compute using the fact that the manifold can be made into a model for $B\pi$ by adding cells of dimension $3$ and higher, which do not alter the first cohomology.
\[
H^1(\pi;\La) \cong H^1(N_{p,q};\La) \stackrel{PD}{\cong} H_3(N_{p,q};\La) \cong H_3(\wt{N}_{p,q};\Z) \cong H_3(S^3\times\R ;\Z)\cong \Z.\qedhere
\]
\end{proof}

\begin{prop}\label{prop:TFAE}
	The following are equivalent:
	\begin{enumerate}[(i)]
		\item \label{lensit:1} $N_{p,q}$ and $N_{p,q'}$ are $\CP$-stably diffeomorphic;
		\item \label{lensit:2} $c_*[N_{p,q}]=c_*[N_{p,q'}]\in H_4(\Z/p\times \Z)/\pm \Aut(\Z/p\times \Z)$;
		\item \label{lensit:3} $c_*[L_{p,q}]=c_*[L_{p,q'}]\in H_3(\Z/p)/\pm \Aut(\Z/p)$;
		\item \label{lensit:4} $q\equiv\pm r^2 q'\mod p$ for some $r\in \Z$;
		\item \label{lensit:5} $L_{p,q}$ and $L_{p,q'}$ are homotopy equivalent;
        \item \label{lensit:6} The $\Q/\Z$-valued linking forms of $L_{p,q}$ and $L_{p,q'}$ are isometric.
	\end{enumerate}
\end{prop}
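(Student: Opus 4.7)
The plan is to cycle through the equivalences (i)$\Leftrightarrow$(ii)$\Leftrightarrow$(iii)$\Leftrightarrow$(iv)$\Leftrightarrow$(v)$\Leftrightarrow$(vi). The equivalence (i)$\Leftrightarrow$(ii) is immediate from \cref{theorem:Kreck}, since both $N_{p,q}$ are orientable with fundamental group $\Z/p\times\Z$, and hence share the $1$-type $(\Z/p\times\Z,0)$.

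For (ii)$\Leftrightarrow$(iii), I will first invoke the K\"unneth theorem, using that $H_n(\Z/p;\Z)$ equals $\Z,\Z/p,0,\Z/p,0$ for $n=0,\ldots,4$, to obtain
\[
H_4(\Z/p\times\Z;\Z)\;\cong\;H_3(\Z/p;\Z)\otimes H_1(\Z;\Z)\;\cong\;\Z/p,
\]
under which $c_*[N_{p,q}]=c_*[L_{p,q}]\times[S^1]$ corresponds to $c_*[L_{p,q}]$. To match the $\Aut$-orbits, observe that $\Z/p$ is the torsion subgroup of $\Z/p\times\Z$, hence characteristic, yielding a split surjection $\Aut(\Z/p\times\Z)\twoheadrightarrow\Aut(\Z/p)\times\Aut(\Z)$ with kernel the shears $\Hom(\Z,\Z/p)$. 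The Lyndon--Hochschild--Serre spectral sequence for $1\to\Z/p\to\Z/p\times\Z\to\Z\to 1$ (with trivial $\Z$-action, since the group is abelian) has only two nontrivial columns and thus collapses at $E_2$, giving $H_4\cong E^2_{1,3}=H_1(\Z;\Z)\otimes H_3(\Z/p;\Z)$. A shear fixes $\Z/p$ and descends to the identity on $\Z$, so it acts trivially on $E_2$; the automorphism $\phi_r\in\Aut(\Z/p)$ acts as its action by $r^2$ on $H_3(\Z/p;\Z)$; and $-1\in\Aut(\Z)$ acts as $-1$ on $H_1(\Z;\Z)$. Both quotients therefore amount to quotienting by $\{\pm r^2:r\in(\Z/p)^\times\}$, proving (ii)$\Leftrightarrow$(iii).

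For (iii)$\Leftrightarrow$(iv) I will use the standard CW-structure on $L_{p,q}$ arising from the free $C_p$-action on $S^3$, whose $\Z[C_p]$-cellular chain complex has the form
\[
\Z[C_p]\xrightarrow{\,T^q-1\,}\Z[C_p]\xrightarrow{\,N\,}\Z[C_p]\xrightarrow{\,T-1\,}\Z[C_p],
\]
where $N=1+T+\cdots+T^{p-1}$ and $T$ generates $C_p$. Tracing $[L_{p,q}]$ through this resolution, a standard computation identifies $c_*[L_{p,q}]$ with $q^{-1}\bmod p$ in $H_3(\Z/p;\Z)\cong\Z/p$, for a fixed choice of generator. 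Then (iii) reads $q^{-1}\equiv\pm r^2(q')^{-1}\pmod p$, which after inversion and relabelling $r\leftrightarrow r^{-1}$ is precisely (iv).

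The remaining equivalences (iv)$\Leftrightarrow$(v)$\Leftrightarrow$(vi) are classical. The equivalence (iv)$\Leftrightarrow$(v) is Whitehead's homotopy classification of $3$-dimensional lens spaces. For (v)$\Leftrightarrow$(vi), the $\Q/\Z$-valued linking form on $H_1(L_{p,q};\Z)\cong\Z/p$ takes the shape $(x,y)\mapsto qxy/p\bmod 1$ up to a choice of generator, and two such forms on $\Z/p$ are isometric exactly when $q\equiv\pm r^2 q'\pmod p$; combined with the fact that $\pi_1$ together with the linking form is a complete homotopy invariant of $3$-dimensional lens spaces, this yields the equivalence. The main technical step will be the spectral-sequence computation in the second paragraph showing that shear automorphisms act trivially on $H_4$; the rest is K\"unneth bookkeeping combined with well-known lens-space theory.
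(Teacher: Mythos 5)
Your proof is correct and follows the same overall route as the paper: Kreck's theorem for (i)$\Leftrightarrow$(ii), K\"unneth for (ii)$\Leftrightarrow$(iii), the identification $c_*[L_{p,q}]=q^{\pm1}$ together with the fact that $r\in(\Z/p)^\times$ acts by $r^2$ on $H_3(\Z/p)$ for (iii)$\Leftrightarrow$(iv), Whitehead's theorem for (iv)$\Leftrightarrow$(v), and the computation of the linking form for (vi). Two steps are done by different local means, both valid: for (ii)$\Leftrightarrow$(iii) you analyse $\Aut(\Z/p\times\Z)$ explicitly (splitting off the shears $\Hom(\Z,\Z/p)$ and checking via the collapsing Lyndon--Hochschild--Serre spectral sequence that they act trivially on $H_4$), which actually supplies detail the paper's one-line treatment of the automorphism actions elides; and for (iii)$\Leftrightarrow$(iv) you trace $[L_{p,q}]$ through the equivariant cellular chain complex of $S^3$, whereas the paper uses the degree-$q^{-1}$ map $L_{p,q}\to L_{p,1}$. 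The only caveats are cosmetic: the value $q$ versus $q^{-1}$ in your chain-level computation depends on the cell-structure convention, but condition (iv) is invariant under simultaneously inverting $q$ and $q'$, as you note; and in (vi) the sign $\pm$ should be understood as coming from the orientation ambiguity of the linking form, a point the paper also treats lightly.
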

\begin{proof}
	Items \eqref{lensit:1} and \eqref{lensit:2} are equivalent by \cref{theorem:Kreck}.	
By the K\"{u}nneth theorem, \[H_4(\Z/p \times \Z) \cong H_3(\Z/p) \otimes_{\Z} H_1(\Z) \cong H_3(\Z/p) \cong \Z/p.\]  The image of $c_*[L_{p,q}] \in H_3(\Z/p)$ under this identification is precisely $c_*[N_{p,q}]$, since $B\Z = S^1$.
Thus \eqref{lensit:3} and \eqref{lensit:2} are equivalent by the construction of $N_{p,q}$.
	
 Let $m$ be such that $mq=1\mod p$. Then there is a degree $m$ map $g \colon L_{p,q}\to L_{p,1}$ that induces an isomorphism on fundamental groups, given by $[z_1,z_2]\mapsto [z_1,z_2^m]$. Take $c_*[L_{p,1}]$ as the generator of $H_3(\Z/p)$. Then $c_*\circ g_*[L_{p,q}]=m c_*[L_{p,1}]$ corresponds to the element $m =q^{-1} \in \Z/p \cong H_3(\Z/p)$.
 An element $r \in (\Z/p)^\times \cong \Aut(\Z/p)$ acts on $H_1(\Z/p)\cong \Z/p$ and hence also on $H^2(\Z/p)\cong \Ext^1_\Z(H_1(\Z/p),\Z) \cong \Z/p$ by taking the product with $r$. Since $H^*(\Z/p)\cong \Z[x]/(px)$, where the generator $x$ lies in degree two~\cite[Example~3.41]{hatcher}, $r$ acts on $H^4(\Z/p)\cong \Ext^1_\Z(H_3(\Z/p),\Z)$ via multiplication by $r^2$. Therefore, the action of $r$ on $H_3(\Z/p)\cong \Z/p$ is also multiplication by~$r^2$. Hence \eqref{lensit:3} and \eqref{lensit:4} are equivalent.

Items \eqref{lensit:4} and \eqref{lensit:5} are equivalent by \cite[Theorem 10]{Whitehead-lens-spaces}.
Seifert ~\cite{Seifert} computed that the linking form $H_1(L_{p,q}) \times H_1(L_{p,q}) \to \Q/\Z$ is isometric to
\[\begin{array}{rcl} \Z/p \times \Z/p &\to & \Q/\Z  \\  (x,y) & \mapsto & -qxy/p. \end{array}\]
For $q$ and $q'$, the associated forms are isometric up to a sign if and only if \eqref{lensit:4} is satisfied.  Thus \eqref{lensit:4} and \eqref{lensit:6} are equivalent.
\end{proof}

\subsection{The \texorpdfstring{$k$}{k}-invariant is required in general} \label{sec:k-required}

In this section we give examples where the $\CP$-stable classification is determined by the 2-type, but in contrast to the case that $\pi$ is the fundamental group of some aspherical 4-manifold, here the stable  isomorphism class of the second homotopy group is not sufficient to determine the classification.
As with the examples in the previous section, these examples can be compared with  \cite[Conjecture A]{teichnerthesis}, although we remark that this conjecture was only made for finite groups.

Let $X$ be a closed, oriented, aspherical $3$-manifold with fundamental group $G$.
Let $p \geq 2$, let $1 \leq q < p$, and let $X_{p,q}:=L_{p,q}\# X$ and $M_{p,q}:=X_{p,q}\times S^1$. Then $M_{p,q}$ is a closed $4$-manifold with fundamental group $\pi:=(\Z/p*G)\times \Z$.

\begin{lemma}\label{lemma:H1-pi-zero}
The group $\pi$ is infinite and has $H^1(\pi;\La)=0$. In particular, the pair $(\pi_2,k)$ suffices for the $\CP$-stable classification over $\pi$ of oriented manifolds.
\end{lemma}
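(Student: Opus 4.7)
The plan is to reduce everything to the one-endedness of $\pi$. Once that is secured, Theorem~13.5.5 of \cite{geoghegan-2008} (already invoked in the proof of Theorem~\ref{thm:k}) delivers $H^1(\pi;\La)=0$, and the ``in particular'' clause is an immediate application of Theorem~\ref{thm:k}\eqref{thm:k:item2} with orientation character $w=0$.

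First, $\pi$ contains $\Z$ as a direct factor, hence is infinite. To establish one-endedness, decompose $\pi = G_1 \times \Z$ with $G_1 := \Z/p \ast G$. The factor $G_1$ is infinite and finitely generated, since $G$ is the fundamental group of a closed aspherical $3$-manifold, hence infinite and finitely presented. Thus $\Z$ is an infinite normal subgroup of $\pi$ of infinite index, and $\pi$ is a direct product of two infinite finitely generated groups; either framing allows one to invoke the classical result that such a group has one end.

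For a self-contained verification, a short Cayley-graph argument suffices. Given any finite $F \subset \pi$, choose a finite $K \subset G_1$ and $N \in \N$ with $F \subset K \times [-N,N]$, and pick $g_0 \in G_1 \setminus K$ using that $G_1$ is infinite. The slabs $G_1 \times \{n\}$ for $|n| > N$ are each connected and disjoint from $F$, and the vertical line $\{g_0\} \times \Z$ is disjoint from $F$ and meets every one of these slabs; together they form a connected unbounded subset of $\pi \setminus F$ whose complement in $\pi \setminus F$ is finite, exhibiting a unique unbounded component.

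The only mildly non-trivial ingredient is the one-endedness itself; with that secured, the remaining statements reduce to bookkeeping with results already in place in the paper.
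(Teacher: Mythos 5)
Your proof is correct, but it takes a genuinely different route from the paper. The paper never discusses ends here: it exploits the fact that $\pi$ is already realised as $\pi_1(M_{p,q})$ for the closed oriented $4$-manifold $M_{p,q}=X_{p,q}\times S^1$, and computes
\[
H^1(\pi;\La)\cong H^1(M_{p,q};\La)\cong H_3(M_{p,q};\La)\cong H_3(\widetilde{X_{p,q}}\times\R;\Z)\cong H_3(\widetilde{X_{p,q}};\Z)=0,
\]
the last vanishing because $\pi_1(X_{p,q})=\Z/p\ast G$ is infinite, so $\widetilde{X_{p,q}}$ is a noncompact $3$-manifold. You instead prove one-endedness of $\pi=(\Z/p\ast G)\times\Z$ purely group-theoretically and then quote Theorem~13.5.5 of Geoghegan, exactly as the paper does in the proof of Theorem~\ref{thm:k}\eqref{thm:k:item2}. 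Your approach is more intrinsic (it needs no manifold realising $\pi$ and no Poincar\'e duality) and the classical fact you invoke --- a direct product of two infinite finitely generated groups, equivalently a finitely generated group with an infinite finitely generated normal subgroup of infinite index, has one end --- is standard and applies here since $G$, being the fundamental group of a closed aspherical $3$-manifold, is infinite. The paper's computation is shorter given the geometric setup already in place, and it yields $H^1(\pi;\La)=0$ without passing through the ends characterisation. One small imprecision in your self-contained Cayley-graph verification: the union of the slabs $G_1\times\{n\}$, $|n|>N$, with the single column $\{g_0\}\times\Z$ does \emph{not} have finite complement in $\pi\setminus F$, since $(G_1\setminus K)\times[-N,N]$ is infinite. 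The fix is immediate: every column $\{g\}\times\Z$ with $g\in G_1\setminus K$ is disjoint from $F$ and meets the slabs, so every vertex of $\pi\setminus F$ outside the finite set $K\times[-N,N]$ connects to your unbounded set, which gives the unique unbounded component. With that repair, or simply by citing the classical result as you primarily do, the argument is complete.
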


\begin{proof}
Certainly $\pi = (\Z/p*G)\times \Z$ is infinite. Since $\pi_1(X)=G$ is nontrivial, $\pi_1(X_{p,q}) = \Z/p *G$ is infinite, and hence $\widetilde{X_{p,q}}$ is a noncompact 3-manifold, so $H_3(\widetilde{X_{p,q}};\Z)=0$.
Since $\pi_1(M_{p,q})=\pi$, we see that
\[H^1(\pi;\La)=H^1(M_{p,q};\La)\cong H_3(M_{p,q};\La)\cong H_3(\widetilde{X_{p,q}}\times \R;\Z)\cong H_3(\widetilde{X_{p,q}};\Z)=0.\]
The second sentence of the lemma then follows from \cref{thm:k}~\eqref{thm:k:item2}.
\end{proof}

\begin{lemma}\label{lemma:iso-pi-2}
The second homotopy group $\pi_2(X_{p,q})$ is independent of $q$.
\end{lemma}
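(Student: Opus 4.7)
The plan is to compute $\pi_2(X_{p,q}) \cong H_2(\widetilde{X_{p,q}})$ via Mayer--Vietoris on the universal cover, and to observe that every ingredient in the resulting exact sequence is independent of $q$.

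First I would decompose $X_{p,q} = L' \cup_{S^2} X'$, where $L' := L_{p,q} \sm \mathring{B}^3$ and $X' := X \sm \mathring{B}^3$ are glued along the connect-sum sphere. Thicken to an open cover $X_{p,q} = U \cup V$ with $U \simeq L'$, $V \simeq X'$ and $U \cap V \simeq S^2$. Passing to the universal cover $\widetilde{X_{p,q}}$ and using that $\pi_1(L') = \Z/p$ and $\pi_1(X') = G$ embed as the free factors of $\pi = \Z/p * G$, the preimages split as
\[
\mathcal U \simeq \bigsqcup_{\pi/(\Z/p)} \widetilde{L'}, \qquad \mathcal V \simeq \bigsqcup_{\pi/G} \widetilde{X'}, \qquad \mathcal U \cap \mathcal V \simeq \bigsqcup_{\pi} S^2,
\]
where $\widetilde{L'} = S^3 \sm \coprod_{\Z/p} \mathring{B}^3$ and $\widetilde{X'} = \widetilde{X} \sm \coprod_G \mathring{B}^3$, with $\widetilde{X}$ contractible by asphericity.

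Next, from the long exact sequence of the pair $(S^3, \widetilde{L'})$ together with excision, I would compute $H_2(\widetilde{L'}) \cong \Z[\Z/p]/(N)$, where $N := 1 + \sigma + \cdots + \sigma^{p-1}$ is the norm element and $[S^3] \mapsto N$ under $H_3(S^3) \to H_3(S^3, \widetilde{L'}) \cong \Z[\Z/p]$; the basepoint boundary sphere represents $[1]$. An analogous computation with $(\widetilde{X}, \widetilde{X'})$ yields $H_2(\widetilde{X'}) \cong \Z G$, with the basepoint boundary sphere representing $1$. Both groups, and these distinguished generators, depend only on $p$ and $G$, not on $q$. Since $\pi$ is infinite, $\widetilde{X_{p,q}}$ is a noncompact 3-manifold and $H_3(\widetilde{X_{p,q}}) = 0$, so the Mayer--Vietoris sequence reduces to
\[
0 \to \Z\pi \xrightarrow{\alpha} \bigl(\Z\pi \otimes_{\Z[\Z/p]} \Z[\Z/p]/(N)\bigr) \oplus \Z\pi \to \pi_2(X_{p,q}) \to 0.
\]
The map $\alpha$ sends the chosen lift of the connect-sum sphere to $(1 \otimes 1, \pm 1)$, a formula manifestly independent of $q$. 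Therefore $\pi_2(X_{p,q}) \cong \coker(\alpha)$ is independent of $q$ as a $\Z\pi$-module, and a short calculation further identifies this cokernel with $\Z\pi/\Z\pi \cdot N$.

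The main subtlety will be confirming that $\alpha$ truly does not depend on $q$: although the $\Z/p$-action on $S^3$ and the configuration of the $p$ lifted balls depend on $q$, the intrinsic identification $H_3(S^3, \widetilde{L'}) \cong \Z[\Z/p]$ (via the chosen basepoint ball), the equality $[S^3] = N$ in this group, and hence the image $[\partial B_{\mathrm{basept}}] = [1] \in \Z[\Z/p]/(N)$ of the basepoint boundary sphere, are all canonical and $q$-free. With this observation in hand, the conclusion is immediate from the displayed short exact sequence.
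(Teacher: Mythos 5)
Your proof is correct and follows essentially the same route as the paper: a Mayer--Vietoris computation of $H_2$ with $\Z\pi$-coefficients for the connected-sum decomposition, where the key inputs are that $H_2$ of the punctured lens-space piece is $\coker(N)\cong\Z[\Z/p]/(N)$ independently of $q$, that the aspherical piece contributes a free summand onto which the connect-sum sphere maps isomorphically, and hence $\pi_2(X_{p,q})\cong\Z\pi/\Z\pi N$. The paper phrases this slightly more generally (for any $Y_1\# Y_2$ with $\pi_2(Y_i)=0$, one fundamental group finite and one infinite) and computes the homology of the punctured pieces by a second Mayer--Vietoris rather than the long exact sequence of the pair on the universal cover, but these are cosmetic differences.
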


\begin{proof}
We show the following statement, from which the lemma follows by taking $Y_1 = L_{p,q}$ and $Y_2=X$: let $Y_1,Y_2$ be closed, connected, oriented 3-manifolds with $\pi_2(Y_1)=\pi_2(Y_2)=0$.  Suppose that $G_1:=\pi_1(Y_1)$ is finite and $G_2:=\pi_1(Y_2)$ is infinite.  Then $\pi_2(Y_1\# Y_2)$ depends only on $G_1$ and $G_2$.

To investigate $\pi_2(Y_1 \# Y_2) = H_2(Y_1 \# Y_2;\Z[G_1*G_2])$, we start by computing $H_2(\cl(Y_i \sm D^3);\Z[G_1 * G_2])$.
For the rest of the proof we write $$R:= \Z[G_1 * G_2].$$
Consider the Mayer-Vietoris sequence for the decomposition $Y_i = \cl(Y_i \sm D^3) \cup_{S^2} D^3$:
\begin{align*}
0 &\ra H_3(Y_i;R)  \\ \xrightarrow{\partial}  H_2(S^2;R) \ra H_2(\cl(Y_i \sm D^3);R) \oplus H_2(D^3;R) &\ra H_2(Y_i;R).
\end{align*}
We have $H_2(Y_i;R) \cong R \otimes_{\Z[G_i]} \pi_2(Y_i) =0$, $H_2(S^2 ;R) \cong R \cong R \otimes_{\Z[G_i]} \Z[G_i]$ and
\[H_3(Y_i;R) = \begin{cases} R \otimes_{\Z[G_i]} \Z & |G_i| < \infty \\ 0 & |G_i| = \infty. \end{cases}\]
In the case that $G_i$ is finite, the boundary map is given by
\[\Id \otimes N \colon  R \otimes_{\Z[G_i]} \Z \ra R \otimes_{\Z[G_i]} \Z[G_i],\]
where $N \colon 1 \mapsto  \sum_{g \in G_i} g$ sends the generator of $\Z$ to the norm element of $\Z[G_i]$.
Thus
\[H_2(\cl(Y_i\sm D^3);R) = \begin{cases} \coker(\Id \otimes N) & |G_i| < \infty \\ R & |G_i| = \infty. \end{cases}\]

Now, as in the hypothesis of the statement we are proving, take $G_1$ to be finite and $G_2$ infinite. Then the Mayer-Vietoris sequence for $Y_1 \# Y_2$,
\begin{align*}
0 &\ra H_3(Y_1 \# Y_2;R)  \\ \to  H_2(S^2;R) \ra H_2(\cl(Y_1 \sm D^3);R) \oplus H_2(\cl(Y_2 \sm D^3);R) &\ra H_2(Y_1 \# Y_2;R) \to 0
\end{align*}
becomes
\begin{align*}
0  \ra  R \xrightarrow{\,j\,} \coker(\Id \otimes N) \oplus R \ra H_2(Y_1 \# Y_2;R) \ra 0.
\end{align*}
From the first Mayer-Vietoris sequence above, in the case that $G_i$ is infinite, we see that the map $R\xrightarrow{j} \coker(\Id \otimes N) \oplus R \xrightarrow{\pr_2} R$ is an isomorphism, where $\pr_2$ is the projection to the $R$ summand.  It follows that
\[\pi_2(Y_1 \# Y_2) \cong H_2(Y_1\# Y_2;R) \cong \coker\big(\Id \otimes N \colon R \otimes_{\Z[G_i]} \Z \ra R \otimes_{\Z[G_i]} \Z[G_i]\big).\]  This $R$-module depends only on the groups $G_1$ and $G_2$, as desired.
\end{proof}

\begin{prop}
For $\pi = (\Z/p*G)\times \Z$ as above, the stable isomorphism class of the pair $(\pi_2,k)$ detects the $\CP$-stable diffeomorphism type of closed, orientable 4-manifolds with fundamental group isomorphic to $\pi$, but the stable isomorphism class of the second homotopy group does not.  That is, there exists a pair of closed $4$-manifolds $M$ and, $M'$ with fundamental group $\pi$ such that $\pi_2(M)$ and $\pi_2(M')$ are stably isomorphic but $M$ and $M'$ are not $\CP$-stably diffeomorphic.
\end{prop}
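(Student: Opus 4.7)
The first assertion is immediate from \cref{lemma:H1-pi-zero} and \cref{thm:k}\eqref{thm:k:item2}: the group $\pi$ is infinite with $H^1(\pi;\La)=0$, and hence one-ended. For the failure of $\pi_2$ alone to detect $\CP$-stable diffeomorphism, my plan is to take $M := M_{p,q}$ and $M' := M_{p,q'}$ for $p\geq 5$ and integers $q,q'$ with $L_{p,q}\not\simeq L_{p,q'}$, the simplest choice being $(p,q,q')=(5,1,2)$. Since the universal cover of $X_{p,q}\times S^1$ is $\widetilde X_{p,q}\times\R$ with $\R$ contractible, $\pi_2(M_{p,q})\cong \pi_2(X_{p,q})$ as $\La$-modules (with the $\Z$-factor of $\pi$ acting trivially); by \cref{lemma:iso-pi-2} this is independent of $q$, so $\pi_2(M)\cong\pi_2(M')$. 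By \cref{theorem:Kreck} it then suffices to show $c_*[M]\neq c_*[M']$ in $H_4(\pi;\Z)/\pm\Aut(\pi)$.

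Identifying $B\pi \simeq (B\Z/p\vee BG)\times S^1$, the K\"unneth formula combined with $H_4(\Z/p;\Z)=0$ and $H_n(G;\Z)=0$ for $n\geq 4$ (since $G$ is the fundamental group of a closed aspherical $3$-manifold) yields $H_4(\pi;\Z)\cong H_3(\Z/p*G;\Z)\otimes H_1(S^1;\Z)\cong \Z/p\oplus\Z$. The two degree-one pinching maps $X_{p,q}\to L_{p,q}$ and $X_{p,q}\to X$ show that the image of $[X_{p,q}]$ in $H_3(B\Z/p\vee BG)=\Z/p\oplus\Z$ is $(c_*[L_{p,q}],\pm[X])=(q^{-1},\pm 1)$, using the identification of $c_*[L_{p,q}]$ with $q^{-1}$ from the proof of \cref{prop:TFAE}. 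Hence $c_*[M_{p,q}]=(q^{-1},\pm 1)$ under the K\"unneth splitting.

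The crux is to analyse the action of $\pm\Aut(\pi)$ on $\Z/p\oplus\Z$. The free product $\Z/p*G$ has trivial centre (as a nontrivial free product of nontrivial groups), so the $\Z$-factor of $\pi$ is its centre and is therefore characteristic. Shear automorphisms $(\alpha,n)\mapsto(\alpha,n+f(\alpha))$ with $f\colon\Z/p*G\to\Z$ act trivially on $H_4(\pi;\Z)$, since their only potentially nonzero effect in the K\"unneth decomposition factors through $H_4(B(\Z/p*G);\Z)=0$. Modulo inner automorphisms (which act trivially on group homology), $\Aut(\Z/p*G)=\Aut(\Z/p)\times\Aut(G)$ by classical results on automorphisms of free products, invoking that $G$ is freely indecomposable (by Papakyriakopoulos' sphere theorem applied to the aspherical $X$) and that $\Z/p$ and $G$ are non-isomorphic. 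This acts on $H_3(\Z/p*G)=H_3(\Z/p)\oplus H_3(G)=\Z/p\oplus\Z$ by $r\mapsto r^2$ on the first summand (as in \cref{prop:TFAE}) and by $\pm 1$ on the second summand (every self-homotopy equivalence of the closed orientable $3$-manifold $X$ has degree $\pm 1$). Combined with $\Aut(\Z)=\{\pm 1\}$ and the overall sign, the orbit of $(q^{-1},1)$ under $\pm\Aut(\pi)$ is $\{(\pm r^2 q^{-1},\pm 1) : r\in(\Z/p)^\times\}$, so $c_*[M_{p,q}]$ and $c_*[M_{p,q'}]$ coincide modulo this action iff $q\equiv\pm s^2 q'\pmod p$ for some $s$, equivalently by \cref{prop:TFAE} iff $L_{p,q}\simeq L_{p,q'}$. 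Since $L_{5,1}\not\simeq L_{5,2}$, the pair $M_{5,1}, M_{5,2}$ furnishes the required example. The main obstacle I anticipate is the automorphism analysis, particularly confirming that no ``exotic'' automorphism of $\pi$ outside the pieces analysed here can enlarge the orbit.
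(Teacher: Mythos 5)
Your proof of the first assertion and of the stable isomorphism $\pi_2(M_{p,q})\cong\pi_2(M_{p,q'})$ matches the paper; for the key step, showing that $M_{p,q}$ and $M_{p,q'}$ are not $\CP$-stably diffeomorphic when $L_{p,q}\not\simeq L_{p,q'}$, you take a genuinely different route. The paper pushes the bordism forward along the projection $t\times\id\colon(\Z/p*G)\times\Z\to\Z/p\times\Z$: since $G$ maps trivially to $\Z/p$, the $3$-manifold $X_{p,q}$ is bordant to $L_{p,q}$ over $\Z/p$, hence $M_{p,q}$ is bordant to $L_{p,q}\times S^1$ over $\Z/p\times\Z$, and the conclusion is read off from \cref{prop:TFAE}, which is already established. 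You instead compute $H_4(\pi;\Z)\cong\Z/p\oplus\Z$ via K\"unneth, locate $c_*[M_{p,q}]=(q^{-1},\pm 1)$ using the pinch maps, and determine the full orbit under $\pm\Aut(\pi)$. Your computation is correct: the centre argument does show the $\Z$-factor is characteristic, the shear part does act trivially on $H_4$ (the only correction terms would have to pass through $H_4(\Z/p*G)=0$ or through $H_2(S^1)=0$), and the factor automorphisms act as you say. The one imprecision is the claim that $\Aut(\Z/p*G)$ modulo inner automorphisms equals $\Aut(\Z/p)\times\Aut(G)$: by Fouxe--Rabinovitch one must also allow partial conjugations. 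However, a partial conjugation induces on each summand $\widetilde H_*(\Z/p)$, $\widetilde H_*(G)$ of $H_{>0}(\Z/p*G)$ the same map as an inner automorphism, namely the identity, so your conclusion about the action on $H_3$ is unaffected. What each approach buys: the paper's reduction is much shorter and recycles \cref{prop:TFAE} wholesale, at the cost of being terse about why the fundamental-group identification coming from a $\CP$-stable diffeomorphism is compatible with the projection to $\Z/p\times\Z$; your direct orbit computation is longer and leans on the structure theory of automorphisms of free products, but it makes the $\pm\Aut(\pi)$-ambiguity completely explicit and in fact yields the full $\CP$-stable classification within the family $\{M_{p,q}\}$ (both implications), whereas the paper only needs and only proves the one direction.
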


\begin{proof}
Let $t\colon \Z/p*G\to \Z/p$ be the projection. Then under the induced map $t_*\colon \Omega_3(\Z/p*G)\to \Omega_3(\Z/p)$ the manifold $X_{p,q}$ becomes bordant to $L_{p,q}$, because $\pi_1(X)=G$ maps trivially to $\Z/p$ under $t$.
Cross this bordism with $S^1$ to see that $M_{p,q} = X_{p,q} \times S^1$ is bordant over $\Z/p \times \Z$ to $L_{p,q} \times S^1$.

If the manifolds $M_{p,q}$ and $M_{p,q'}$ are $\CP$-stably diffeomorphic, they are bordant over $\pi$, for some choice of identification of the fundamental groups with $\pi$, because both  $M_{p,q}$ and $M_{p,q'}$ have signature zero.  Therefore the two 4-manifolds are bordant over $\Z/p \times \Z$,.  Combine this with the previous paragraph to see that $L_{p,q} \times S^1$ is bordant to $L_{p,q'} \times S^1$ over $\Z/p \times \Z$.  As we have seen in \cref{section:post-2-type-does-not-suffice-lens-spaces}, this implies that $q=\pm r^2 q'\mod p$ for some $r\in\bbN$ by \cref{prop:TFAE}.  But there are choices of $p, q$ and $q'$ such that this does not hold, so there are pairs of manifolds in the family $\{M_{p,q}\}$ with the same $p$ that are not $\CP$-stably diffeomorphic to one another.

On the other hand, we have $\pi_2(M_{p,q})=\pi_2(X_{p,q}\times S^1)=\pi_2(X_{p,q})$.  By \cref{lemma:iso-pi-2}, $\pi_2(X_{p,q})$ is independent of $q$.
Hence $\pi_2(M_{p,q})\cong \pi_2(M_{p,q'})$ as $\La$-modules for any $q,q'$ coprime to~$p$ with $ 1 \leq q,q' < p$.

We remark that by \cref{lemma:H1-pi-zero}, we know that \cref{thm:k} applies, and so the $k$-invariants must differ for $q$ and $q'$ such that $L_{p,q}$ and $L_{p,q'}$ fail to be homotopy equivalent to one another.
\end{proof}

\bibliographystyle{alpha}
\bibliography{classification}
\end{document}